\newtheorem{theorem}{Theorem}[section]
\newtheorem{lemma}[subsection]{Lemma}
\newtheorem{proposition}[subsection]{Proposition}
\theoremstyle{definition}
\newtheorem{definition}[subsection]{Definition}
\theoremstyle{remark}
\newtheorem{remark}[theorem]{Remark}
\numberwithin{equation}{section}
\DeclareMathOperator{\re}{\hbox{Re}}
\DeclareMathOperator{\im}{\hbox{Im}}
\DeclareMathOperator{\sym}{sym}
\begin{document}
\title[Semi-Riemannian geometry with constraint]
{Semi - Riemannian  Geometry with Nonholonomic Constraints}

\author{Anna Korolko,\ Irina Markina}

\address{Department of Mathematics,
University of Bergen, Johannes Brunsgate 12, Bergen 5008, Norway}

\email{anna.korolko@uib.no}

\address{Department of Mathematics,
University of Bergen, Johannes Brunsgate 12, Bergen 5008, Norway}

\email{irina.markina@uib.no}

\thanks{The authors are partially supported by the grant of the Norwegian Research Council \# 177355/V30, by the grant of the European Science Foundation Networking Programme HCAA, and by the NordForsk Research Network Programme \# 080151}

\subjclass[2000]{53C50,\ 53B30\, 53C17}

\keywords{Semi-Riemannian manifolds, nondegenerate metric, exponential map, Christoffel symbol, extremals, quaternions}


\begin{abstract}

In the present article the geometry of semi-Riemannian manifolds
with nonholonomic constraints is studied. These manifolds can be
considered as analogues to the sub-Riemannian manifolds, where the
positively definite metric is substituted by a nondegenerate
metric. To study properties of the exponential map the Christoffel
symbols and other differential operators were introduced. We study
solutions of the Hamiltonian system and their projections into the
underlying manifold. The explicit formulae were found for a
specific example of a semi-Riemannian manifold with nonholonomic
constraints.
\end{abstract}

\maketitle

\section{Introduction}\label{sec:1}

Sub-Riemannian manifolds and the geometry introduced by bracket generating distributions of smoothly varying $m$-dimensional planes is widely studied interesting subject, which has applications in control theory, quantum physics, C-R geometry, the theory of principal bundles, and other areas. The main difference of the sub-Riemannian manifold from a Riemannian one is the presence of a smooth subbundle of the tangent bundle, generating the entire tangent bundle by means of the commutators of vector fields. The subbundle, which  is often called horizontal, is equipped with a positively definite metric that leads to the triple: manifold, horizontal subbundle, and Riemannian metric on the horizontal subbundle, which is called a {\it sub-Riemannian manifold}. The foundation of the sub-Riemannian geometry can be found in~\cite{LiuSussmann,Montgomery2,Montgomery,Strichartz}. The following question can be asked. What kind of geometrical features will have the mentioned triplet if we change the positively definite metric on the subbundle to an indefinite nondegenerate metric? We use the term {\it semi-Riemannian} to emphasis that the considered metric is nondegenerate in contrast to the positively definite metric, that referred as Riemannian. As it is known to the authors the present work is the first attempt to study systematically the geometry of semi-Riemannian manifolds with nonholonomic constraints, that we called the {\it sub-semi-Riemannian manifolds} or shortly {\it ss-manifolds}. In the present paper we study the exponential map and solutions to the Hamiltonian system that has no established terminology in the literature and can be called geodesics or extremals, see, for instance~\cite{LiuSussmann,Strichartz}. The ss-manifolds have their own peculiarity that distinguishes them from the sub-Riemannian and semi-Riemannian manifolds.
The simplest example of a semi-Riemannian manifold with nonholonomic constraints is the Heisenberg group equipped with the Lorentzian metric and it has been considered in~\cite{Groch1,Groch2,KM}. It was shown in~\cite{KM} that in contrast with the Heisenberg group with positively definite metric the Lorentzian type of the Heisenberg group possesses the uniqueness of extremals both of timelike or spacelike type. The structure of the article is the following. Section~\ref{sec:2} is devoted to main definitions. The collection of technical lemmas concerning Christoffel symbols is proved in Section~\ref{sec:3}. In Section~\ref{sec:4} the extremals and exponential map are introduced, the extendability of extremals and Gauss lemma are shown. Some properties of the length are also studied. Section~\ref{sec:5} is devoted to the differential properties of the exponential map. It is shown that the exponential map possesses an analogue of ``local diffeomorphism'' property, although is it not a diffeomorphism at the origin. The last Section~\ref{sec:6} consists of the example of ss-manifold, where the explicit formulae of extremals are found.

\section{Main definitions}\label{sec:2}

Let $M$ be a connected $n$-dimensional, $n\geq 3$, $C^{\infty}$-manifold. Let $T_x$ and $T^*_x$ denote the tangent
and cotangent spaces at a point $x\in M$, and $\langle Y,\xi\rangle$ the
pairing between them, $Y\in T_x$, $\xi\in T^*_x$. The tangent and cotangent bundles are denoted by $T$ and $T^*$ respectively. Fix an integer
$m$, $1<m<n$. Let $S$ be a fixed subbundle of the tangent
bundle $T$, $S=\bigcup\limits_{x\in M}S_x$, $S_x$ be a fiber over $x$, of the rank $m$. A subbundle $S$ will be
called {\it bracket generating} or complete {\it nonholonomic}, if the vector fields which are
sections of $S$, together with all brackets span $T_x$ at each
$x\in M$. In this case any two points in $M$ can be connected by a piecewise smooth curve $\gamma(s)$ such that the tangent vector $\dot\gamma(s)$ belongs to $S_{\gamma(s)}$ at each point $\gamma(s)$ where the tangent vector exists. The bracket generating subbundle $S$ is called the {\it horizontal bundle} or {\it horizontal distribution} and a curve $\gamma(s)$ satisfying $\dot\gamma(s)\in S_{\gamma(s)}$ is called the {\it horizontal curve}. A result of Chow~\cite{Chow}, see also~\cite{Rashevsky}, guarantees the connectivity of $M$ by a horizontal curve. The necessary and sufficient condition on connectivity by curves tangent to a given distribution of a smooth manifold can be found in~\cite{Sussmann}. We notice that the connectivity of a manifold by horizontal curves tangent to a given distribution depends only on properties of the distribution and not on any metric defined on it or on the tangent bundle.  If $Y\in S$, let $S+[Y,S]$ denote the subbundle
of $T$ spanned by $S$ and all the vector fields
$[Y,X]$, where $X$ varies over
sections $S$. A fiber at a point $x\in M$ is written as $S_x+[Y(x),S_x]\in T_x$ with $Y(x)\in S_x$. Similarly we define  bracket$(k,Y)$ inductively by
bracket$(2,Y)=S+[Y,S]$ and
bracket$(k,Y)=S+[\mbox{bracket}(k-1,Y),S]$. More generally we set
bracket$(2,S)=S+[S,S]$ and
bracket$(k,S)=S+[\mbox{bracket}(k-1,S),S]$. A restriction of a bundle to $x\in M$ is denoted by writing the subscript $x$, for example: bracket$(k,Y(x))=S_x+[\mbox{bracket}(k-1,Y(x)),S_x]\in T_x$. We will say that a vector field $Y\in S$ is a $k$-step generator if bracket$(k,Y(x))=T_x$ for all $x\in M$.
Similarly, $S$ will be said to be
$k$-step bracket generating distribution if bracket$(k,S_x)=T_x$ for
every $x$. From now on we work with a distribution $S$ possessing the bracket generating property.

By analogy with the sub-Riemannian metric we give the following definition.
\begin{definition} Let $M$ be a smooth manifold, $S$ be a bracket generating subbundle of the tangent bundle $T$. A {\it sub-semi-Riemannian metric} $Q$ on $S$ is
a smoothly varying in $x$ nondegenerate quadratic form $Q_x$
on $S_x$. We abbreviate the long and tedious name of sub-semi-Riemannian metric by the term {\it ss-metric}. We call the pair $(S,Q)$ the sub-semi-Riemannian (ss-Riemannian) structure on $M$.
\end{definition}

We remind that the index $\nu$ of a metric is the maximal dimension of the space $V_x\subset S_x$, where the form $Q_x$ is negatively definite. If $\nu=1$ then we call the ss-metric the {\it sub-Lorentzian} metric following the tradition in semi-Riemannian geometry. The ss-metric with the index $\nu=0$ is just a sub-Riemannian metric. Given $Q_x$, we may define a linear mapping $g_x\colon
T^*_x\to T_x$ as follows: for given $\xi\in T^*_x$, the linear mapping
$W\to \langle W,\xi\rangle$, $W\in S_x$ can be represented uniquely as $W\to
Q_x(W,X)$ for some $X\in S_x$, then $X$ is chosen to be $g_x\xi$. The map $g_x$ is called a cometric and completely defined by the two following relations:
\begin{itemize}
\item[(i)]{image of $T^*_x$ under $g_x$ is $S_x$,}
\item[(ii)]{$g_x$ and $Q_x$ are related by the identity\begin{equation}\label{eq:2.1}
   Q_x(W,g_x\xi)=\langle W,\xi\rangle \quad \mbox{for all}\;\; W\in S_x.
\end{equation}}
\end{itemize}

\begin{lemma}
If $Q_x$ is symmetric, nondegenerate and has index $\nu$, then $g_x$ is symmetric, nondegenerate and has index $\nu$.
\end{lemma}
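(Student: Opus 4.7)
The plan is to extract all three properties directly from the defining identity~(\ref{eq:2.1}) together with the constraint $\mathrm{image}\,g_x\subseteq S_x$. Throughout I interpret $g_x$ as the bilinear form $(\xi,\eta)\mapsto \langle g_x\xi,\eta\rangle$ on $T^*_x$, since the adjectives ``symmetric'', ``nondegenerate'' and ``index'' apply to such a form.

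For symmetry, I would substitute $W=g_x\eta\in S_x$ into~(\ref{eq:2.1}) to obtain $Q_x(g_x\eta,g_x\xi)=\langle g_x\eta,\xi\rangle$, and, with the roles of $\xi$ and $\eta$ interchanged, $Q_x(g_x\xi,g_x\eta)=\langle g_x\xi,\eta\rangle$. Symmetry of $Q_x$ equates the two right-hand sides and gives $\langle g_x\xi,\eta\rangle=\langle g_x\eta,\xi\rangle$. The identity $\langle g_x\xi,\eta\rangle = Q_x(g_x\xi,g_x\eta)$ obtained along the way is the workhorse for the remaining two steps.

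Next I would pin down the radical by showing $\ker g_x = S_x^{\circ}$, the annihilator of $S_x$ in $T^*_x$. One inclusion is immediate from $\langle W,\xi\rangle=Q_x(W,g_x\xi)$; conversely, $\xi\in S_x^{\circ}$ forces $Q_x(W,g_x\xi)=0$ for every $W\in S_x$, and since $g_x\xi\in S_x$ and $Q_x$ is nondegenerate on $S_x$, this yields $g_x\xi=0$. So $g_x$ carries its unavoidable $(n-m)$-dimensional radical exactly on $S_x^{\circ}$. The word ``nondegenerate'' here must be read as nondegenerate modulo this radical, i.e.\ the induced map $\bar g_x\colon T^*_x/S_x^{\circ}\to S_x$ is a linear isomorphism.

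Finally, the workhorse identity says precisely that $\bar g_x$ pulls $Q_x$ on $S_x$ back to the form induced by $g_x$ on $T^*_x/S_x^{\circ}$. Because $\bar g_x$ is a linear isomorphism, these two quadratic forms have identical signatures, hence the same index $\nu$. I expect the only real obstacle is this interpretive point about the meaning of ``nondegenerate'' for $g_x$ when $S_x\neq T_x$; once the quotient viewpoint is in place, the whole argument reduces to the short chain of identities above.
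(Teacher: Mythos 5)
Your proposal is correct and follows essentially the same route as the paper: both arguments rest on the identity $\langle g_x\xi,\eta\rangle=Q_x(g_x\xi,g_x\eta)$ obtained by substituting $W=g_x\eta$ into~\eqref{eq:2.1}, from which symmetry, the identification of the kernel, and the equality of indices all follow. If anything, your treatment of nondegeneracy is the more careful one: the paper's proof first concludes $\xi\equiv 0$ from $\langle Y,\xi\rangle=0$ for all $Y\in S$ (which only gives $\xi\in S^{\bot}_x$) and then concedes that $g_x$ degenerates on $S^{\bot}_x$, whereas you state from the outset that nondegeneracy must be read modulo the radical, via the isomorphism $T^*_x/S^{\bot}_x\to S_x$ --- exactly the reading the paper itself adopts in its closing remark.
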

\begin{proof}
 We understand the action of the cometric $g$ on $T^*\times T^*\to\mathbb R$ (omitting $x$)  as following:
$g(\xi,\psi)=\langle g\xi,\psi\rangle$ for any two covectors $\xi$ and $\psi$ from $T^*$.

Thus by definition of the cometric $g$ we have $g(\psi,\xi)=\langle g\psi,\xi\rangle=Q(g\psi,g\xi)$, which equals to $Q(g\xi,g\psi)=\langle g\xi,\psi\rangle=g(\xi,\psi)$ by the symmetry of the ss-metric $Q$.

Now, having the nondegeneracy of $Q$ we prove the nondegeneracy of $g$, that is if $g(\xi,\psi)=0$ for any $\psi\in T^*$ then $\xi\equiv 0$. First of all, we notice that the pairing $\langle Y,\xi\rangle$ is not degenerate by~\eqref{eq:2.1}. Then, taking arbitrary $\psi\in T^*$ and setting
$Y=g\psi$, we obtain $$g(\psi,\xi)=\langle Y,\xi\rangle=0\quad\text{ for any}\quad Y\in S.$$ This implies that $\xi\equiv 0$ by the nondegeneracy of pairing and we conclude that $g$ is nondegenerate cometric.

Let $S^{\bot}_x$
denote the kernel of $g_x$, and $S^{\bot}\subseteq T^*$ be the subbundle with fibers
$S^{\bot}_x$. Then $g_x\colon T^*_x/S^{\bot}_x\to S_x$ is bijection. The relation~\eqref{eq:2.1} shows that the index of $Q_x$ and $g_x$ coincides for any $x\in M$ and that $g_x$ degenerates on $S^{\bot}_x$.
\end{proof}

Conversely, given a symmetric nondegenerate linear operator
$g_x\colon T^*_x\to T_x$ with image $S_x$, there is a unique
nondegenerate quadratic form $Q_x$ satisfying \eqref{eq:2.1}. We
write $g^{jk}_x$, $j,k=1,\ldots n$ for the symmetric matrix
defining the cometric $g_x$ to emphasis that it is a tensor of
covariant type and operates with covectors. The matrix $g^{jk}_x$
is never invertible.

A differential manifold $M$ with a chosen subbundle $S$ of the tangent bundle and with a given nondegenerate ss-metric $Q$ on $S$ will be called the {\it sub-semi-Riemannian manifold} or shortly {\it ss-manifold}. If the index $\nu$ of $Q$ is $1$, then we call the triplet $(M,S,Q)$ a sub-Lorentzian manifold and in the case of $\nu=0$ we get the sub-Riemannian manifold widely studied in~\cite{Gromov,LiuSussmann,Montgomery, Strichartz} and numerous references therein.

We present a couple of examples of ss-manifolds.

$\mathbf{Example\;1}$.
\medskip

Let us consider the following example of sub-Lorentzian manifold that we call the Heisenberg group with sub-Lorentzian metric. This example was considered first in~\cite{Groch1, Groch2} and was also studied in~\cite{KM}. We remind
that the Heisenberg group $\mathbb H^1$ is the space $\mathbb R^3$
furnished with the non-commutative law of multiplication
$$(x,y,z)(x^{\prime},y^{\prime},z^{\prime})=\big(x+x^{\prime},y+y^{\prime},z+z^{\prime}+\frac{1}{2}(yx^{\prime}-xy^{\prime})\big).$$ This gives the $\mathbb R^3$ the structure of a non-abelian Lie group.
The two-dimensional horizontal bundle $S$ is given as
a span of left invariant vector fields
$$X=\frac{\partial}{\partial
x}+\frac{1}{2}y\frac{\partial}{\partial z},\qquad
Y=\frac{\partial}{\partial y}-\frac{1}{2}x\dfrac{\partial}{\partial
z},$$ that can be found as the left action of the Lie group. There is only one nonvanishing commutator $[X,Y]=Z=\frac{\partial}{\partial z}$. We suppose that the
Lorentzian metric $Q$ is defined on $S$ by setting
$$Q(X,X)=-1,\quad Q(Y,Y)=1,\quad Q(X,Y)=0.$$ Thus the triple
$(\mathbb R^3, S, Q)$ is called the Heisenberg group
with the Lorentzian metric, and to differ it from the classical case $\mathbb H^1$ we
use the notation $\mathbb H^1_L$. We say the classical case bearing in mind the manifold $(\mathbb R^3, S, d)$ with a positively definite metric $d$ on $S$.

The quadratic nondegenerate symmetric form $Q$ on $S$ is of the form
\begin{equation*}
    Q=\{Q_{ab}\}=\left(\begin{matrix}
           -1& 0\\
            0& 1
      \end{matrix}\right).
\end{equation*} Take the basis of Lie algebra associated with the Heisenberg group, considered as the Lie group,
$(X, Y, Z)\in T$. The dual basis of $T^*$ consists of the forms $dx$, $dy$, $\omega=dz-\frac{1}{2}(xdy-ydx)$. We wish to find the cometric $g=g^{jk}$. Let $g\,dx=a_1X+a_2Y$. Making use of~\eqref{eq:2.1} for $W$ replaced by $X$ and $Y$, we deduce that $g\,dx=-X$. In the same way we get $g\,dy=Y$. Thus, the equality $g(\zeta,\xi)=Q(g\zeta,g\xi)$ calculated for the basic forms implies the values $g=g^{jk}$ for $j,k=1,2$. The rest of the terms vanish because of $g(dx,\omega)=\langle g\,dx,\omega\rangle=0$, $g(dy,\omega)=\langle g\,dy,\omega\rangle=0$, and $g\omega=0$. Finally we get \begin{equation*}
   {g}^{jk}=\left(\begin{matrix}
           -1& 0 &0\\
            0& 1 &0\\
            0& 0 &0
      \end{matrix}\right).
\end{equation*}
\medskip

$\mathbf{Example\;2}$.
\medskip
Consider the example of ss-manifold related to the notion of Heisenberg-type groups based on quaternions~\cite{Markina,CDKR,Kap1}.
The manifold $M$ is $\mathbb R^7$. The vector fields
\begin{gather}
   X_1=\frac{\partial}{\partial x_1}+\frac{\displaystyle1}{\displaystyle2} \left(+x_2
   \frac{\partial}{\partial z_{1}}-x_4\frac{\partial}{\partial z_{2}}
   - x_3\frac{\partial}{\partial z_{3}}\right),
   \notag\\
   X_2=\frac{\partial}{\partial x_2}+\frac{\displaystyle1}{\displaystyle2} \left(-x_1
   \frac{\partial}{\partial z_{1}}-x_3\frac{\partial}{\partial z_{2}}
   + x_4\frac{\partial}{\partial z_{3}}\right),
   \notag\\
   X_3=\frac{\partial}{\partial x_3}+\frac{\displaystyle1}{\displaystyle2} \left(+x_4
   \frac{\partial}{\partial z_{1}}+x_2\frac{\partial}{\partial z_{2}}
   + x_1\frac{\partial}{\partial z_{3}}\right),
   \notag\\
   X_4=\frac{\partial}{\partial x_4}+\frac{\displaystyle1}{\displaystyle2} \left(-x_3
   \frac{\partial}{\partial z_{1}}+x_1\frac{\partial}{\partial z_{2}}
   - x_2\frac{\partial}{\partial z_{3}}\right),
   \notag
\end{gather} form the basis of four-dimensional horizontal distribution $S$. These vector fields come from the infinitesimal action of the noncommutative group law multiplication
$$
L_{(x,z)}(x',z')=(x,z)\circ (x',z')=\big(x+x',z+z'+\frac{1}{2}\im (\bar x*x')\big)
$$ for $(x,z)$ and $(x',z')$ from $\mathbb R^4\times\mathbb R^3$. Here $\im (\bar x*x')$ is the imaginary part of the product $\bar x*x'$ of the conjugate quaternion $\bar x$ to $x$ by another quaternion $x'$. See the details in Section~\ref{sec:6}. The distribution $S$ is bracket generating due to the commutation relations
\begin{gather}
    [X_1,X_2]=-Z_1,\quad [X_1,X_3]=Z_3,\quad [X_1,X_4]=Z_2,\notag\\
    [X_2,X_3]=Z_2,\quad [X_2,X_4]=-Z_3,\quad [X_3,X_4]=-Z_1,\notag
\end{gather} where $Z_{\beta}=\frac{\partial}{\partial z_{\beta}}$, $\beta=1,2,3$ form a basis of the complement to~$S$ in the tangent bundle.

We define the ss-metric $Q$ on $S$ by the matrix \begin{gather*}
    Q_{\alpha\beta}=\left(\begin{matrix}
           -1& 0 &0 &0\\
            0& -1 &0 &0\\
            0& 0 &1 &0\\
            0 &0 &0 &1\\
      \end{matrix}\right).
\end{gather*} The ss-metric $Q$ has index $2$. The corresponding cometric $g^{jk}$ is obtained like in the Example~1, has index $2$, and assumes the following form
\begin{gather*}
    g_{jk}=\left(\begin{matrix}
           -1& 0 &0 &0 &0 &0 &0\\
            0& -1 &0 &0 &0 &0 &0\\
            0& 0 &1 &0 &0 &0 &0\\
            0 &0 &0 &1 &0 &0 &0\\
            0 &0 &0 &0 &0 &0 &0\\
            0 &0 &0 &0 &0 &0 &0\\
            0 &0 &0 &0 &0 &0 &0
      \end{matrix}\right).
\end{gather*} More details about the manifold of the Example~2 the reader can find in Section~\ref{sec:6}.
\medskip

We refer to~\cite{ChangMarkinaVasiliev} for an example of sub-Lorentzian manifold based on the Lie group different from the nilpotent group.

\section{Christoffel symbols}\label{sec:3}

Recall that $S^{\bot}_x$
denotes the kernel of $g(x)$ and $S^{\bot}\subseteq T^*$, $S^{\bot}=\cup_{x\in M} S^{\bot}_x$. The space $S^{\bot}_x$ is the annihilator of $S_x$ in $T^*_x$.
From now on, we use the summation convention of the differential geometry.
\begin{lemma}\label{lem:2.11} Let $v$ be a covector from $S^{\bot}$. Then
$\langle v,Y\rangle=0$ for all $Y\in S$ if and only if $g^{jk}v_k=0$, $j=1,\ldots,n$.
\end{lemma}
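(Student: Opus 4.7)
The plan is to translate between the two sides of the equivalence by exploiting the fact that $\mathrm{image}(g_x) = S_x$, which is one of the defining properties of the cometric listed in (i). This lets me parametrize every horizontal vector as $Y = g\psi$ for some covector $\psi \in T^*_x$, so that in local coordinates $Y^j = g^{jk}\psi_k$ and hence $\langle v, Y\rangle = v_j g^{jk}\psi_k$. This single identity is the bridge between the two conditions in the lemma.

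For the implication $g^{jk}v_k = 0 \Rightarrow \langle v, Y\rangle = 0$ on $S$, I would fix an arbitrary $Y \in S$, write $Y = g\psi$, and compute $\langle v, Y\rangle = v_j g^{jk}\psi_k = 0$ directly, using the symmetry $g^{jk} = g^{kj}$ (proved in the earlier lemma) together with the hypothesis. For the converse, starting from $\langle v, Y\rangle = 0$ for every $Y \in S$, I would test against the family $Y = g\psi$ as $\psi$ ranges over $T^*_x$, obtaining $(g^{jk}v_k)\psi_j = 0$ for every choice of $\psi_j$, and conclude $g^{jk}v_k = 0$ from the nondegeneracy of the natural pairing between $T_x$ and $T^*_x$.

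I do not expect any genuine obstacle: both implications reduce to a one-line coordinate identity once the surjectivity $g_x: T^*_x \to S_x$ is invoked. The only point to handle carefully is the distinction between $g$ as a linear map $T^*_x \to T_x$ and $g$ as the symmetric matrix $g^{jk}$, and the consistent use of the symmetry of $g^{jk}$ when swapping the roles of the two factors.
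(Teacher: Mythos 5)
Your proof is correct and follows essentially the same route as the paper: both arguments parametrize an arbitrary $Y\in S$ as $Y=g\psi$ using the surjectivity of $g_x\colon T^*_x\to S_x$ and then use the symmetry of $g^{jk}$ to move $g$ onto $v$. The only (harmless) difference is at the final step of the converse, where you invoke the nondegeneracy of the canonical pairing between $T_x$ and $T^*_x$ to conclude $(gv)^k=0$, while the paper rewrites $\langle v,g\omega\rangle$ as $Q(g\omega,gv)$ and appeals to the nondegeneracy of $Q$ on $S$; your finish is marginally more elementary but not a different proof.
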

\begin{proof} Let $\langle v,Y\rangle=0$ for any $Y\in S$. Pick up an arbitrary $Y\in S$, then
there exists a form $\omega$ such that $Y=g\omega$. Moreover, we can assume that $\omega\in T^*/S^{\bot}$. Indeed, if $\omega=\omega_1+v_1$, where $\omega_1\in T^*/S^{\bot}$ and $v_1\in S^{\bot}$, then for any $X\in S$ we get $$Q(X,g\omega)=Q(X,g\omega_1)+Q(X,gv_1)=Q(X,g\omega_1)+\langle v_1,X\rangle=Q(X,g\omega_1).$$  Thus
\begin{gather*}
0=\langle v,Y\rangle=\langle v,g\omega\rangle=Q(g\omega,gv)\quad \text{for all}\quad Y=g\omega\in S \quad  \Rightarrow\quad gv=0.
\end{gather*}
    Here we used the symmetry and the nondegeneracy of $Q$.

Conversely, having $gv=0$ we derive $0=\langle v,g\omega\rangle$ for any $\omega\in T^*/S^{\bot}$. Thus, $\langle v,Y\rangle=0$ for any $Y=g\omega$.
\end{proof}

\begin{lemma}\label{lemma:2.1}
   $(a)$ If $v$ is a section of the annihilator $S^{\bot}$, then
   $$g^{jk}\dfrac{\partial v_k}{\partial x^p}=-\dfrac{\partial g^{jk}}{\partial x^p}v_k.$$
$(b)$ If $x(t)$ is a curve in $M$ and $v(t)$ is such that $v(t)$ is a section of $S^{\bot}$ over $x(t)$, then
$$g^{jk}(x)\dot{v}_k=-\dfrac{\partial g^{jk}(x)}{\partial x^p}\dot{x}^pv_k$$ for all $t$ (here the dot denotes the $t$-derivative).\\
$(c)$ If $v$ and $w$ are sections of $S^{\bot}$, then $$\dfrac{\partial g^{jk}}{\partial x^p}v_kw_j=0.$$
\end{lemma}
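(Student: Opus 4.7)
The plan is to reduce all three statements to the characterization given by the previous Lemma~\ref{lem:2.11}, namely that a covector $v$ lies in $S^{\bot}$ exactly when $g^{jk}v_k=0$ for every $j$, together with the symmetry of $g^{jk}$.

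For part $(a)$ I would simply differentiate the identity $g^{jk}v_k=0$ (valid by Lemma~\ref{lem:2.11} since $v$ is a section of $S^{\bot}$) with respect to $x^p$. The Leibniz rule immediately yields
\begin{equation*}
\frac{\partial g^{jk}}{\partial x^p}v_k + g^{jk}\frac{\partial v_k}{\partial x^p}=0,
\end{equation*}
which rearranges to the stated formula. Part $(b)$ is the same argument along the curve $x(t)$: since $v(t)\in S^{\bot}_{x(t)}$, the function $t\mapsto g^{jk}(x(t))v_k(t)$ is identically zero, so taking $d/dt$ and using the chain rule on the first factor gives
\begin{equation*}
\frac{\partial g^{jk}}{\partial x^p}\dot x^p v_k + g^{jk}\dot v_k=0.
\end{equation*}

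For part $(c)$ the idea is to take the identity from $(a)$ and contract with $w_j$. This gives
\begin{equation*}
g^{jk}w_j\frac{\partial v_k}{\partial x^p}=-\frac{\partial g^{jk}}{\partial x^p}v_kw_j.
\end{equation*}
On the left, the symmetry of $g^{jk}$ lets me rewrite $g^{jk}w_j=g^{kj}w_j$, and another application of Lemma~\ref{lem:2.11} to the section $w\in S^{\bot}$ shows that this vanishes for every $k$. Hence the left-hand side is zero, and the required identity follows.

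There is really no substantive obstacle here: the whole statement is an exercise in differentiating the defining relation $g^{jk}v_k=0$. The only point requiring minor care is the use of the symmetry of $g^{jk}$ in step $(c)$, since the two indices play asymmetric roles in the formula $g^{jk}v_k=0$ (the contraction is on $k$, whereas in $(c)$ one wants to contract $w$ with the index $j$); symmetry is exactly what allows the second application of Lemma~\ref{lem:2.11} to go through.
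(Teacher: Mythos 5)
Your proof is correct and follows essentially the same route as the paper: differentiate the defining identity $g^{jk}v_k=0$ for parts $(a)$ and $(b)$, then contract the result of $(a)$ with $w_j$ and use the symmetry of $g^{jk}$ together with $g^{kj}w_j=0$ for part $(c)$. No substantive differences to report.
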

\begin{proof}
 To prove $(a)$ one applies $\dfrac{\partial}{\partial x^p}$ to the identity $g^{jk}(x)v_k(x)=0$ which defines the null-bundle.

To prove $(b)$ we take the derivative $\dfrac{d}{dt}$ of the identity $g^{jk}(x(t))v_k(t)=0$.

Finally, to prove $(c)$ first we apply $(a)$ to obtain $$g^{jk}\dfrac{\partial v_k}{\partial x^p}=-\dfrac{\partial g^{jk}}{\partial x^p}v_k.$$ Then we multiply both sides by $w\in S^{\bot}$ and, making use of the symmetry of $g$, we get $$\frac{\partial g^{jk}}{\partial x^p}v_kw_j=
-g^{jk}\frac{\partial v_k}{\partial x^p}w_j=-g^{kj}w_j\frac{\partial v_k}{\partial x^p}=0$$ because of $g^{kj}w_j=0$.
\end{proof}

The following question can arise: how the information about the bracket generating properties reflects in the properties of $g$? If $X,\,Y\in S$, then $[X,Y]$ is an element of $T/S$. If $X\in S_x$ then there exists $\xi\in T^*_x$ with $X=g_x\xi$, and similarly $Y=g_x\eta$. The covectors $\xi$ and $\eta$ are not defined uniquely, as it was shown in the proof of Lemma~\ref{lem:2.11}. Thus they should be regarded as elements of $T^*_x/S^{\bot}_x$. The annihilator $S^{\bot}$ contains all the necessary information concerning the commutators through the pairing $\langle [X,Y], v\rangle$, when $v$ varies over $S^{\bot}$. Let us consider the trilinear form $\langle[g\xi,g\eta],v\rangle$ on $(T^*_x/S^{\bot}_x)\times(T^*_x/S^{\bot}_x)\times S^{\bot}_x$.
\begin{lemma}\label{lemma:2.2}
In local coordinates \begin{equation}\label{eq:2.21}\langle[g\xi,g\eta],v\rangle=\left(g^{jp}\dfrac{\partial g^{rq}}{\partial x^j}-g^{jq}\dfrac{\partial g^{rp}}{\partial x^j}\right)\xi_p\eta_qv_r\end{equation} for $v$ varying over $S^{\bot}_x$ for any $x$.
\end{lemma}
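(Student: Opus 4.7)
The plan is a direct local-coordinate computation. First, extend $\xi$ and $\eta$ from covectors at $x$ to local smooth covector fields in a neighborhood. Then $X=g\xi$ and $Y=g\eta$ are smooth vector fields with components $X^r=g^{rp}\xi_p$ and $Y^r=g^{rq}\eta_q$, so the Lie bracket has components
\begin{equation*}
[X,Y]^r = g^{jp}\xi_p\,\partial_j(g^{rq}\eta_q) - g^{jq}\eta_q\,\partial_j(g^{rp}\xi_p).
\end{equation*}

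Next, apply the product rule. This yields four terms: two in which $\partial_j$ falls on a factor of $g$, and two in which it falls on $\xi_p$ or $\eta_q$. Pair with $v_r$, where $v\in S^{\bot}_x$. Since $g^{rp}v_r=0$ and $g^{rq}v_r=0$ by the very definition of the annihilator $S^{\bot}$, the two terms involving $\partial_j\xi_p$ and $\partial_j\eta_q$ vanish, because each of them carries a factor $g^{rp}v_r$ or $g^{rq}v_r$. What remains is precisely
\begin{equation*}
\langle [g\xi,g\eta],v\rangle = \left(g^{jp}\dfrac{\partial g^{rq}}{\partial x^j} - g^{jq}\dfrac{\partial g^{rp}}{\partial x^j}\right)\xi_p\eta_q v_r,
\end{equation*}
which is the claimed identity.

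Finally, I would check that the right-hand side is well-defined as a trilinear form on $(T^*_x/S^{\bot}_x)\times(T^*_x/S^{\bot}_x)\times S^{\bot}_x$. Independence of the extensions of $\xi,\eta$ is clear from the computation, since no derivatives of $\xi$ or $\eta$ appear. Independence of the chosen representatives modulo $S^{\bot}_x$ follows from Lemma~\ref{lemma:2.1}(c): if, say, $\xi$ is replaced by $\xi+v'$ with $v'\in S^{\bot}$, then the correction to $\xi_p$ contributes $\left(g^{jp}\partial_j g^{rq} - g^{jq}\partial_j g^{rp}\right)v'_p\eta_q v_r$, and both summands vanish by part (c) of that lemma applied to the pair $(v',v)$ and to $(v,v')$ respectively.

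The only conceptual point to watch is the vanishing of the derivatives-of-$\xi,\eta$ terms; everything else is bookkeeping with the product rule. I do not anticipate a real obstacle, since the restriction $v\in S^{\bot}$ is exactly what is needed to annihilate the unwanted pieces.
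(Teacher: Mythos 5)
Your proof is correct and follows essentially the same route as the paper: expand the bracket of $X^r=g^{rp}\xi_p$ and $Y^r=g^{rq}\eta_q$ by the product rule and observe that the terms carrying $\partial_j\xi_p$ or $\partial_j\eta_q$ are killed upon pairing with $v$ because $g^{rp}v_r=g^{rq}v_r=0$. Your extra well-definedness check is a welcome addition (the paper only remarks on it before the lemma); just note that the first correction term there vanishes directly from $g^{jp}v'_p=0$ rather than from Lemma~\ref{lemma:2.1}(c), which is only needed for the term where the derivative falls on $g^{rp}$ contracted against the two annihilators.
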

\begin{proof}
Let $\xi$ and $\eta$ denote any sections of $T^*$. Then $X^r=g^{rp}\xi_p$ and $Y^r=g^{rq}\eta_q$ are  sections of $S$, and
\begin{gather}
    [X,Y]^r=X^j\dfrac{\partial}{\partial x^j}Y^r-Y^j\dfrac{\partial}{\partial x^j}X^r\notag\\
    =g^{jp}\xi_p\dfrac{\partial g^{rq}}{\partial x^j}\eta_q-g^{jq}\eta_q\dfrac{\partial g^{rp}}{\partial x^j}\xi_p+g^{jp}\xi_pg^{rq}\dfrac{\partial \eta_q}{\partial x^j}-g^{jq}\eta_qg^{rp}\dfrac{\partial \xi_p}{\partial x^j}.\notag
\end{gather}
Taking the inner product with $v\in S^{\bot}$, we find that the last two terms are annihilated since $g^{rq}\dfrac{\partial \eta_q}{\partial x^j}v_r=g^{qr}v_r\dfrac{\partial \eta_q}{\partial x^j}=0$ and $g^{rp}\dfrac{\partial \xi_p}{\partial x^j}v_r=g^{pr}v_r\dfrac{\partial \xi_p}{\partial x^j}=0$. Thus we obtain~\eqref{eq:2.21}.
\end{proof}

We want do define the analogue of the Christoffel symbols but with the raised indexes and see the relation between them and the trilinear form defined in Lemma~\ref{lemma:2.2}. We write \begin{equation}\label{Gamma}\Gamma^{kpq}=\dfrac{1}{2}\left(g^{kj}\dfrac{\partial g^{pq}}{\partial x^j}-g^{pj}\dfrac{\partial g^{kq}}{\partial x^j}-g^{qj}\dfrac{\partial g^{kp}}{\partial x^j}\right).\end{equation} For sections $\xi\in T^*$ and $v\in S^{\bot}$ define $\Gamma(\xi,v)\in T$ by $\Gamma^k(\xi,v)=\Gamma^{kpq}\xi_pv_q$. In classical case of differential geometry the Christoffel symbols are used to express the covariant derivative in local coordinates. Unlike to the classical covariant derivative, which associates for two vector fields another vector field, the operator $\Gamma$, as we will see from the following lemma, associates a vector field for a pair of covector fields and, moreover, the resulting vector field is horizontal.

\begin{lemma}\label{lemma:2.3}
$\Gamma(\xi,v)$ is a well-defined vector field; that is it is independent of the choice of coordinates. Moreover, $\Gamma(\xi,v)$ is a horizontal vector field and $\Gamma(\xi+w,v)=\Gamma(\xi,v)$ for $w\in S^{\bot}$, so that $\Gamma\colon (T^*/S^{\bot})\times S^{\bot}\to S$.
\end{lemma}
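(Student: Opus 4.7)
The plan is to prove all three assertions simultaneously by establishing the Koszul-type invariant identity
\begin{equation}\label{eq:planKoszul}
2\langle \Gamma(\xi,v),\eta\rangle = -\langle [g\xi,g\eta],v\rangle, \qquad \eta\in T^*,
\end{equation}
and then reading the conclusions off. Assuming \eqref{eq:planKoszul}, the right-hand side is manifestly intrinsic (it only involves the Lie bracket of vector fields and the natural pairing), and by nondegeneracy of the pairing $T\times T^*\to\R$ there is a unique vector in $T_x$ whose contraction with every $\eta$ reproduces this scalar; the identity then says that in every chart the numbers \eqref{Gamma} are the components of precisely this intrinsic vector, so $\Gamma(\xi,v)$ is a well-defined vector field. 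For horizontality, feed any $\eta\in S^{\bot}$, extended to a local section of $S^{\bot}$, into \eqref{eq:planKoszul}: then $g\eta\equiv 0$, so $[g\xi,g\eta]=0$ and $\langle\Gamma(\xi,v),\eta\rangle=0$, showing that $\Gamma(\xi,v)$ annihilates $S^{\bot}$ and therefore lies in $S$. For the last claim, replacing $\xi$ by $\xi+w$ with $w\in S^{\bot}$ keeps $g(\xi+w)=g\xi$ unchanged, so the right-hand side of \eqref{eq:planKoszul} does not change, and by the same pairing argument $\Gamma(\xi+w,v)=\Gamma(\xi,v)$.

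To derive \eqref{eq:planKoszul} I would compute $2\Gamma^{kpq}\xi_p v_q\eta_k$ straight from \eqref{Gamma}. The third summand $g^{qj}(\partial g^{kp}/\partial x^j)\xi_p v_q\eta_k$ vanishes at once via Lemma~\ref{lem:2.11} since $g^{qj}v_q=0$. In the two surviving summands I would apply Lemma~\ref{lemma:2.1}(a) to push each derivative off the cometric and onto the covector $v$, obtaining
\begin{equation*}
2\langle \Gamma(\xi,v),\eta\rangle = X^j Y^q \frac{\partial v_q}{\partial x^j} - Y^j X^q \frac{\partial v_q}{\partial x^j},
\end{equation*}
where $X^r=g^{rp}\xi_p$ and $Y^r=g^{rq}\eta_q$ are the horizontal fields attached to $\xi$ and $\eta$. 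Since $v$ annihilates every horizontal vector, the identities $v_q X^q=v_q Y^q=0$ differentiate to $X^q\partial v_q/\partial x^j=-v_q\partial X^q/\partial x^j$ and $Y^q\partial v_q/\partial x^j=-v_q\partial Y^q/\partial x^j$; substituting these converts the right-hand side into $v_q\bigl(Y^j\partial X^q/\partial x^j-X^j\partial Y^q/\partial x^j\bigr)=-\langle [X,Y],v\rangle$, which is exactly \eqref{eq:planKoszul}.

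The only real obstacle is the bookkeeping in this computation: two applications of Lemma~\ref{lemma:2.1}(a) and two uses of the Leibniz rule on the vanishing inner products $v_q X^q$ and $v_q Y^q$ must be combined so that the signs line up and the surviving antisymmetric combination is recognized as the Lie bracket $[X,Y]$. The algebra is routine but the signs are delicate; once \eqref{eq:planKoszul} is in place, the three assertions of the lemma are immediate.
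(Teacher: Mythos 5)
Your proof is correct, but it takes a genuinely different route from the paper's. The paper proves coordinate independence by brute force: it writes out the transformation of each of the three summands of $\Gamma^{kpq}\xi_pv_q$ under a change of charts, shows that the second-derivative terms in \eqref{eq:gamma1} and \eqref{eq:366} cancel and that the remaining anomalous terms die because $g^{cd}v_d=0$, and only afterwards checks horizontality by pairing with $\omega\in S^{\bot}$ and the $\xi\mapsto\xi+w$ invariance via \eqref{eq:2.31}. You instead establish the single identity $2\langle\Gamma(\xi,v),\eta\rangle=-\langle[g\xi,g\eta],v\rangle$ and read all three assertions off it at once: the right-hand side is a tensor by Lemma~\ref{lemma:2.2} (that lemma is what guarantees the pairing is pointwise-linear in $\eta$, so the duality argument producing a unique vector in $T_x$ is legitimate), it kills $\eta\in S^{\bot}$, and it is unchanged under $\xi\mapsto\xi+w$. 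Your contraction computation checks out against \eqref{Gamma}, Lemma~\ref{lemma:2.1}(a) and \eqref{eq:2.21}, including the signs. Two remarks. First, your identity is essentially the paper's own relation \eqref{eq:gamma}, which the paper only states later inside the proof of Theorem~\ref{th:2.4}; your argument amounts to reorganizing the material so that this relation is proved first and Lemma~\ref{lemma:2.3} becomes a corollary --- arguably cleaner, and it spares the reader the second-derivative bookkeeping. Second, be aware that with the paper's conventions the correct sign is the one you derived, $2\Gamma(\xi,v)\eta=-\langle[g\xi,g\eta],v\rangle$, whereas \eqref{eq:gamma} as printed omits the minus sign; the discrepancy is harmless for both Lemma~\ref{lemma:2.3} and the injectivity/surjectivity argument of Theorem~\ref{th:2.4}, but do not be alarmed when your formula disagrees with the displayed one.
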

\begin{proof}
Let us prove that $\Gamma^k(\xi,v)$ transforms as a tangent vector at each $x$: $\widetilde\Gamma^k(y)=\Gamma^d(x)\frac{\partial y^k}{\partial x^d}$, where $y=\psi(x)$ and $\psi$ is a local diffeomorphism determining a new coordinate system. By $\widetilde{g},\,\widetilde{\xi}$ and $\widetilde{v}$ denote the expressions for $g,\,\xi$ and $v$ in the new coordinates. We have
\begin{equation}\label{eq:zamena_perem}
\xi_k=\dfrac{\partial \psi^j(x)}{\partial x^k}\widetilde{\xi}_j,\quad v_k=\dfrac{\partial \psi^j(x)}{\partial x^k}\widetilde{v}_j,\quad
\widetilde{g}^{kj}(y)=g^{pq}(x)\dfrac{\partial \psi^k(x)}{\partial x^p}\dfrac{\partial \psi^j(x)}{\partial x^q}.
\end{equation}
In the new coordinates
\begin{gather}
\widetilde{g}^{jp}\dfrac{\partial \widetilde{g}^{kq}}{\partial y^j}=\left(g^{ab}\dfrac{\partial y^j}{\partial x^a}\dfrac{\partial y^p}{\partial x^b}\right)\dfrac{\partial }{\partial x^l}\left(g^{cd}\dfrac{\partial y^k}{\partial x^c}\dfrac{\partial y^q}{\partial x^d}\right)\dfrac{\partial x^l}{\partial y^j},\notag
\end{gather} hence
\begin{gather}
\widetilde{g}^{jp}\dfrac{\partial \widetilde{g}^{kq}}{\partial y^j}\widetilde{\xi}_p\widetilde{v}_q=\left[g^{ab}\dfrac{\partial y^j}{\partial x^a}\dfrac{\partial y^p}{\partial x^b}\dfrac{\partial g^{cd}}{\partial x^l}\dfrac{\partial y^k}{\partial x^c}\dfrac{\partial y^q}{\partial x^d}\dfrac{\partial x^l}{\partial y^j}\right.\notag
\end{gather}
\begin{equation}\label{eq:1}
\left.
+g^{ab}\dfrac{\partial y^j}{\partial x^a}\dfrac{\partial y^p}{\partial x^b}\dfrac{\partial x^l}{\partial y^j}g^{cd}\left(\dfrac{\partial^2 y^k}{\partial x^l\partial x^c}\dfrac{\partial y^q}{\partial x^d}+\dfrac{\partial y^k}{\partial x^c}\dfrac{\partial^2 y^q}{\partial x^l\partial x^d}\right)\right]\widetilde{\xi}_p\widetilde{v}_q.
\end{equation}
The first term of~\eqref{eq:1} equals to $$\dfrac{\partial
y^k}{\partial x^c}\left(g^{ab}\dfrac{\partial y^j}{\partial
x^a}\dfrac{\partial g^{cd}}{\partial x^l}\dfrac{\partial
x^l}{\partial y^j}\right)\xi_bv_d= \left(\dfrac{\partial
y^k}{\partial x^c}\right)\left(g^{ab}\dfrac{\partial
g^{cd}}{\partial x^a}\right)\xi_bv_d.$$ Changing indexes $b$ to
$p$, $d$ to $q$, and $a$ to $j$ we recognize the tangent bundle
transformation of $g^{jp}\dfrac{\partial g^{kq}}{\partial
x^j}\xi_pv_q$. The middle term vanishes since
\begin{equation}\label{eq:vanish}g^{cd}\dfrac{\partial
y^q}{\partial x^d}\widetilde{v}_q=g^{cd}v_d=0\end{equation} and
the last term gives
\begin{equation}\label{eq:gamma1}g^{lb}\xi_bg^{cd}\dfrac{\partial
y^k}{\partial x^c}\dfrac{\partial^2 y^q}{\partial x^d\partial
x^l}\widetilde v_q.\end{equation} The middle term in
$\Gamma^{kpq}\xi_pv_q$ transforms as follows
$$\widetilde{g}^{jq}\dfrac{\partial \widetilde{g}^{kp}}{\partial
y^j}\widetilde{\xi}_p\widetilde{v}_q=\left(\dfrac{\partial
y^k}{\partial x^c}\right)\left(g^{ab}\dfrac{\partial
g^{cd}}{\partial x^a}\right)\xi_dv_b=\left(\dfrac{\partial
y^k}{\partial x^c}\right)\left(g^{jq}\dfrac{\partial
g^{cp}}{\partial x^j}\right)\xi_pv_q.$$ The other terms vanish by
the same reason as in~\eqref{eq:vanish}. The third term in
$\Gamma^{kpq}\xi_pv_q$ in the new coordinates takes the form
\begin{equation}\label{eq:366}
-\widetilde{g}^{jk}\dfrac{\partial \widetilde{g}^{pq}}{\partial y^j}\widetilde{\xi}_p\widetilde{v}_q=-\dfrac{\partial y^k}{\partial x^c}\left(
g^{jc}\dfrac{\partial g^{pq}}{\partial x^j}\right)\xi_pv_q-g^{lb}\dfrac{\partial y^k}{\partial x^b}g^{cd}\xi_c\dfrac{\partial^2 y^q}{\partial x^d\partial x^l}\widetilde{v}_q.
\end{equation} We see that the last term from~\eqref{eq:366} is canceled with~\eqref{eq:gamma1} (after the change of indexes). Taking together the rest of terms, we get the desired transformation law
$$\widetilde{\Gamma}^{kpq}\widetilde{\xi}_p\widetilde{v}_q=\dfrac{\partial y^k}{\partial x^c}\Gamma^{cpq}\xi_pv_q.$$

To show that $\Gamma(\xi,v)\in S$ we take a covector $\omega\in S^{\bot}_x$ and calculate $$\langle\Gamma(\xi,v),\omega\rangle=\Gamma^{kpq}\xi_pv_q\omega_k.$$ Using~\eqref{Gamma} and Lemma~\ref{lemma:2.1}, we argue for each term of $\Gamma^{k}(\xi_pv_q)$ as it follows $${g}^{jp}\dfrac{\partial {g}^{kq}}{\partial x^j}\xi_pv_q\omega_k=-g^{jp}\xi_pg^{kq}\frac{\partial v_q}{\partial x^j}\omega_k=-g^{jp}\xi_pg^{qk}\omega_k\frac{\partial v_q}{\partial x^j}=0$$ and get $\langle\Gamma(\xi,v),\omega\rangle=0$, that implies $\Gamma(\xi,v)\in S$.

The property $\Gamma(\xi+\omega,v)=\Gamma(\xi,v)$ for $\omega\in S^{\bot}_x$ follows from
\begin{equation}\label{eq:2.31}
\Gamma^k(\xi,v)=\frac{1}{2}\Big(g^{jp}\xi_pg^{kq}\frac{\partial v_q}{\partial x_j}+g^{jk}\frac{\partial g^{pq}}{\partial x^j}\xi_pv_q\Big)
\end{equation}

and Lemma~\ref{lemma:2.1}.
\end{proof}

Analogously to sub-Riemannian situation~\cite{Strichartz} we have
\begin{theorem}\label{th:2.4}
A vector field $X\in S$ is a $2$-step bracket generator if and only if $\Gamma(\xi,\cdot)\colon S^{\bot}\to S$ is injective, where $X=g\xi$. In particular, $S$ satisfies the $2$-step bracket generating hypothesis if and only if $\,\Gamma(\xi,\cdot )\colon S^{\bot}\to S$ is injective for every nonzero form $\xi\in T^*/ S^{\bot}$.
\end{theorem}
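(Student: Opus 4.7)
The plan is to dualize the $2$-step bracket generating condition and then translate it into a statement about $\Gamma$ by establishing an identity that expresses the commutator $[g\xi,g\eta]$ paired against $v\in S^{\bot}$ in terms of $\Gamma(\xi,v)$.

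First I would reformulate the condition by duality: a horizontal field $X\in S_x$ is a $2$-step bracket generator at $x$ iff $S_x+[X,S_x]=T_x$, which by passing to annihilators is equivalent to the statement that no nonzero $v\in S^{\bot}_x$ annihilates $[X,Y]$ for every $Y\in S_x$ (note that $v$ annihilates $S_x$ itself automatically). Since $g\colon T^*\to S$ is surjective onto $S$, we may write $X=g\xi$, $Y=g\eta$, and the condition becomes: the only $v\in S^{\bot}$ satisfying $\langle [g\xi,g\eta],v\rangle=0$ for every $\eta\in T^*$ is $v=0$.

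The central step is to establish the identity
\begin{equation*}
\langle [g\xi,g\eta],v\rangle \;=\; -2\,\langle \Gamma(\xi,v),\eta\rangle, \qquad v\in S^{\bot}.
\end{equation*}
For the left side I would expand $[g\xi,g\eta]^r=X^j\partial_jY^r-Y^j\partial_jX^r$ with $X^j=g^{jp}\xi_p$ and $Y^r=g^{rq}\eta_q$, pair with $v_r$, and discard the terms containing $\partial_j\xi_p$ and $\partial_j\eta_q$, which vanish against $g^{rp}v_r=0$ and $g^{rq}v_r=0$; then I would invoke Lemma~\ref{lemma:2.1}(a) to rewrite $(\partial_j g^{rq})v_r$ as $-g^{rq}\partial_j v_r$. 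For the right side I would start from the simplified expression~\eqref{eq:2.31} for $\Gamma^k(\xi,v)$ (valid for $v\in S^{\bot}$), contract with $\eta_k$, and apply Lemma~\ref{lemma:2.1}(a) once more. Both computations should reduce to the same quantity $X^jY^q\partial_jv_q-Y^jX^q\partial_jv_q$, up to the sign $-2$.

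With the identity in hand, the statement ``$\langle[g\xi,g\eta],v\rangle=0$ for every $\eta\in T^*$'' becomes ``$\langle\Gamma(\xi,v),\eta\rangle=0$ for every $\eta$'', which by the nondegeneracy of the canonical pairing $T\times T^*\to\mathbb R$ is equivalent to $\Gamma(\xi,v)=0$. Hence $g\xi$ is a $2$-step bracket generator iff $\Gamma(\xi,\cdot)\colon S^{\bot}\to S$ has trivial kernel, which is the first assertion. The second follows by applying the first pointwise to every nonzero horizontal field; the invariance $\Gamma(\xi+w,v)=\Gamma(\xi,v)$ for $w\in S^{\bot}$ from Lemma~\ref{lemma:2.3} allows $\xi$ to range over $T^*/S^{\bot}$. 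The main obstacle is the clean derivation of the central identity: both sides must be coaxed into the common form above, and Lemma~\ref{lemma:2.1}(a) must be invoked at precisely the right places to eliminate the derivatives of $\xi$ and $\eta$. Starting from the reduced formula~\eqref{eq:2.31} rather than the raw definition~\eqref{Gamma} of $\Gamma^{kpq}$ significantly shortens the bookkeeping.
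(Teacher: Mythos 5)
Your proof is correct and follows essentially the same route as the paper: it establishes the identity relating $\langle[g\xi,g\eta],v\rangle$ to $\langle\Gamma(\xi,v),\eta\rangle$ via \eqref{eq:2.21} and \eqref{eq:2.31}, and then dualizes the surjectivity of $[X,\cdot]\bmod S\colon S\to T/S$ into the injectivity of $\Gamma(\xi,\cdot)\colon S^{\bot}\to S$. (Your constant $-2$ is in fact the correct one --- the paper's \eqref{eq:gamma} states $+2$ --- but the sign is immaterial to the injectivity argument.)
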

\begin{proof} In the proof we exploit the properties of different linear mappings which we defined up to now. We have \begin{equation}\label{eq:gamma}
   \langle [g\xi,g\eta],v\rangle=2\Gamma(\xi,v)\eta
\end{equation}  by~\eqref{eq:2.21} and~\eqref{eq:2.31}.
In order to show that the vector field $X$ is a $2$ step bracket generator we must show that the vector fields $[X,Y]\mod S$ fill out all $T/ S$ (at each $x$) as $Y$ varies over $S$. In other words, the mapping
\begin{equation}\label{eq:c1}
[X,\cdot]\mod S\ \colon S\to T/ S\quad\text{is surjective at each}\ \ x.
\end{equation} Since at any $x$ the space $T^*/ S^{\bot}$ is canonically isomorphic to the dual to $S$, statement~\eqref{eq:c1} is equivalent to
\begin{equation}\label{eq:c2}
[g\xi,g(\cdot)]\colon T^*/ S^{\bot}\to T/ S\quad\text{is surjective at each}\ \ x, \quad\text{where}\ \ X=g\xi.
\end{equation} We notice that at any $x$ the space $S^{\bot}$ is canonically isomorphic to the dual to $T/ S$. Thus~\eqref{eq:c2} is equivalent to
\begin{equation}\label{eq:c3}
\Gamma(\xi,\cdot)\colon S^{\bot}\to S\quad\text{is injective at each}\ \ x.
\end{equation}
\end{proof}

We discussed earlier the relation between classical notion of covariant derivative and the Christoffel symbols. The closest notion to the notion of covariant derivative is symmetrized covariant derivative that was defined in~\cite{Strichartz}. It is natural to define the same concept on ss-manifolds.
\begin{definition}
The symmetrized covariant derivative $\bigtriangledown_{\sym}$ of a vector field $Y$ is defined by
\begin{equation}\label{eq:cov_der}
 (\bigtriangledown_{\sym} Y)^{kq}=g^{kj}\dfrac{\partial Y^q}{\partial x^j}+g^{qj}\dfrac{\partial Y^k}{\partial x^j}-Y^j\dfrac{\partial g^{kq}}{\partial x^j}.
\end{equation}
\end{definition} Thus $(\bigtriangledown_{\sym})_x\colon T_x\to T_x\times T_x$.

\begin{lemma}
$\bigtriangledown_{\sym}$ is a well-defined differential operator from tensors of type $(1,0)$ to symmetric tensors of type $(2,0)$. Furthermore, if $Y$ is a vector field from $S$, that is $Y=g\xi$, then $(\bigtriangledown_{sym}Y)^{kq}v_q=2\Gamma^k(\xi,v)$ for any $v\in S^{\bot}$.
\end{lemma}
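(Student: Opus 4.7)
The statement has two components: (i) that $\bigtriangledown_{\sym}$ is well-defined on all of $M$ with output in symmetric $(2,0)$-tensor fields, and (ii) the explicit identity when $Y=g\xi$ is contracted with a covector from $S^{\bot}$. I treat them in that order.

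\textbf{Symmetry and coordinate invariance.} Symmetry in the indices $k,q$ is immediate from \eqref{eq:cov_der}: the first two terms on the right-hand side exchange under $k\leftrightarrow q$, and the third is manifestly symmetric since $g^{kq}=g^{qk}$. For well-definedness the cleanest observation is that
\begin{equation*}
(\bigtriangledown_{\sym}Y)^{kq}=-(\mathcal{L}_Y g)^{kq},
\end{equation*}
where $\mathcal{L}_Y$ is the Lie derivative and $g$ is viewed as a $(2,0)$-tensor field with components $g^{kq}$; matching \eqref{eq:cov_der} against the standard coordinate formula for the Lie derivative of a contravariant $2$-tensor confirms this. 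Since the Lie derivative of a symmetric $(2,0)$-tensor is again a symmetric $(2,0)$-tensor, the first part follows. If one prefers to avoid the Lie-derivative language, the direct route is: under a local diffeomorphism $y=\psi(x)$ apply \eqref{eq:zamena_perem}, expand each of the three terms of \eqref{eq:cov_der} by the product and chain rules, and verify that the non-tensorial second-derivative contributions $\partial^2\psi^{\ell}/\partial x^a\partial x^b$ produced by differentiating $\widetilde Y$ in the first two terms cancel exactly against those produced by differentiating $\widetilde g^{kq}$ in the third. The bookkeeping is of the same flavour as in the proof of Lemma~\ref{lemma:2.3}.

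\textbf{The identity for $Y=g\xi$.} Substitute $Y^q=g^{qp}\xi_p$ into \eqref{eq:cov_der} and contract with $v_q\in S^{\bot}$. The middle term then contains the factor $g^{qj}v_q$, which vanishes by Lemma~\ref{lem:2.11}, so the entire middle term drops out. In the first term, expand $\partial Y^q/\partial x^j=(\partial g^{qp}/\partial x^j)\xi_p+g^{qp}\,\partial \xi_p/\partial x^j$; the piece involving $\partial\xi_p/\partial x^j$ carries the factor $g^{qp}v_q=0$ and also drops out, leaving $g^{kj}(\partial g^{pq}/\partial x^j)\xi_p v_q$. For the third term, Lemma~\ref{lemma:2.1}(a) rewrites $-(\partial g^{kq}/\partial x^j)v_q$ as $g^{kq}\,\partial v_q/\partial x^j$, so the third term becomes $g^{jp}\xi_p g^{kq}\,\partial v_q/\partial x^j$. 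Summing the two surviving contributions yields precisely twice the right-hand side of \eqref{eq:2.31}, i.e.\ $2\Gamma^k(\xi,v)$.

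\textbf{Main obstacle.} The only step that is not a short manipulation is the direct verification of coordinate invariance, should one forgo the Lie-derivative identification. The cancellation of the second derivatives of $\psi$ is purely algebraic and runs parallel to the calculation carried out in Lemma~\ref{lemma:2.3}; I anticipate no conceptual difficulty, only careful indexing.
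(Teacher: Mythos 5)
Your proof is correct, and the contraction identity in the second half is established by exactly the same computation as in the paper: kill the middle term and the $\partial\xi_p/\partial x^j$ piece of the first term using $g^{qj}v_q=0$, convert the third term via Lemma~\ref{lemma:2.1}(a), and recognize the two survivors as twice~\eqref{eq:2.31}. Where you genuinely diverge is in the well-definedness of $\bigtriangledown_{\sym}$. The paper proves tensoriality by brute force: it transforms each of the three terms of~\eqref{eq:cov_der} under $y=\psi(x)$ and checks that the second-derivative terms $\partial^2 y/\partial x\,\partial x$ cancel, exactly the "direct route" you relegate to a fallback. Your primary argument instead identifies $(\bigtriangledown_{\sym}Y)^{kq}=-(\mathcal{L}_Y g)^{kq}$ with $g$ viewed as a contravariant $2$-tensor; since the coordinate formula $(\mathcal{L}_Y g)^{kq}=Y^j\partial_j g^{kq}-g^{jq}\partial_j Y^k-g^{kj}\partial_j Y^q$ matches~\eqref{eq:cov_der} up to sign and the Lie derivative is defined coordinate-freely via the flow of $Y$, both tensoriality and symmetry come for free. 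This is shorter and more conceptual than the paper's index bookkeeping, and it explains \emph{why} the particular combination of terms in~\eqref{eq:cov_der} is tensorial; the paper's computation, on the other hand, is self-contained and does not presuppose familiarity with Lie derivatives of contravariant tensors. Either way the lemma is fully proved.
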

\begin{proof}
The symmetry follows from the symmetry of the cometric $g$. Let us show that $\bigtriangledown_{\sym}Y$, $Y\in T$, transforms as a tensor field of rank $(2,0)$.
We check how the first term of \eqref{eq:cov_der}
transforms with the following change of coordinates:
$$\widetilde{Y}^{k}=\frac{\partial y^k}{\partial x^j}Y^j,\quad\widetilde{g}^{kj}=g^{pq}\frac{\partial y^k}{\partial x^p}\frac{\partial y^j}{\partial x^q}.$$
\begin{eqnarray*}
\widetilde{g}^{kj}\dfrac{\partial \widetilde{Y}^q}{\partial y^j} & = &
g^{\alpha\beta}\frac{\partial y^k}{\partial x^{\alpha}}\frac{\partial y^j}{\partial x^{\beta}}
   \frac{\partial}{\partial x^l}\left(\frac{\partial y^q}{\partial x^a}Y^a\right)\frac{\partial x^l}{\partial y^j}
\\
& = & g^{\alpha\beta}\frac{\partial y^k}{\partial x^{\alpha}}\frac{\partial y^j}{\partial x^{\beta}}\frac{\partial x^l}{\partial y^j}
   \frac{\partial^2 y^q}{\partial x^l \partial x^a}Y^a
+
g^{\alpha\beta}\frac{\partial y^k}{\partial x^{\alpha}}\frac{\partial y^j}{\partial x^{\beta}}\frac{\partial x^l}{\partial y^j} \frac{\partial Y^a}{\partial x^l}\frac{\partial y^q}{\partial x^a}
\\
& = & g^{\alpha l}\frac{\partial y^k}{\partial x^{\alpha}}\frac{\partial^2 y^q}{\partial x^l \partial x^a}Y^a
+
g^{\alpha l}\frac{\partial y^k}{\partial x^{\alpha}}\frac{\partial Y^a}{\partial x^l}\frac{\partial y^q}{\partial x^a}.
\end{eqnarray*}
Analogously, the second term
\begin{gather*}
   \widetilde{g}^{qj}\dfrac{\partial \widetilde{Y}^k}{\partial y^j}
   =g^{\alpha l}\frac{\partial y^q}{\partial x^{\alpha}}\frac{\partial^2 y^k}{\partial x^l \partial x^a}Y^a+g^{\alpha l}\frac{\partial y^q}{\partial x^{\alpha}}\frac{\partial Y^a}{\partial x^l}\frac{\partial y^k}{\partial x^a}.
\end{gather*}
And the third term
\begin{eqnarray*}
\widetilde{Y}^j\dfrac{\partial \widetilde{g}^{kq}}{\partial y^j}
&  = & \frac{\partial y^j}{\partial x^{\alpha}}Y^{\alpha}\frac{\partial }{\partial x^{l}}\left(g^{ab}\frac{\partial y^k}{\partial x^a}
   \frac{\partial y^q}{\partial x^b}\right)\frac{\partial x^l}{\partial y^j}\\
& = & \frac{\partial y^j}{\partial x^{\alpha}}Y^{\alpha}\frac{\partial g^{ab}}{\partial x^l}\frac{\partial y^k}{\partial x^a}
   \frac{\partial y^q}{\partial x^b}\frac{\partial x^l}{\partial y^j}\\
& + &
\frac{\partial y^j}{\partial x^{\alpha}}Y^{\alpha}g^{ab}\frac{\partial^2 y^k}{\partial x^l \partial x^a} \frac{\partial y^q}{\partial x^b}
\frac{\partial x^l}{\partial y^j}+\frac{\partial y^j}{\partial x^{\alpha}}Y^{\alpha}g^{ab}\frac{\partial y^k}{\partial x^a}
   \frac{\partial^2 y^q}{\partial x^l\partial x^b}\frac{\partial x^l}{\partial y^j}.
\end{eqnarray*}
After summation and necessary renaming of indexes we have
$$(\bigtriangledown_{\sym} \widetilde{Y})^{kq}= \dfrac{\partial y^k}{\partial x^i}\dfrac{\partial y^q}{\partial x^j}(\bigtriangledown_{\sym} Y)^{ji}.$$

Let us show the second statement of the theorem. We assume $Y=g\xi\in S$ and $v\in S^{\bot}$. Then from~\eqref{eq:gamma} follows that
\begin{eqnarray*}
    (\bigtriangledown_{\sym} g\xi)^{kq}v_q & = & \Big(g^{kj}\frac{\partial g^{pq}}{\partial x^j}\xi_p+g^{kj}g^{pq}\frac{\partial \xi_p}{\partial x^j}\Big)v_q
\\
& + & \Big(g^{qj}\frac{\partial g^{pk}}{\partial x^j}\xi_p+g^{qj}g^{pk}\frac{\partial \xi_p}{\partial x^j}\Big)v_q
\\
& - & g^{jp}\xi_p
\frac{\partial g^{kq}}{\partial x^j}v_q
\\
& = & g^{jp}\xi_pg^{kq}\dfrac{\partial v_q}{\partial x^j}+g^{kj}\dfrac{\partial g^{pq}}{\partial x^j}\xi_pv_q
=2\Gamma^k(\xi,v)
\end{eqnarray*}
by~\eqref{eq:2.31} and $gv=0$.
\end{proof}

\section{Hamiltonian system, exponential map and lengths of curves}\label{sec:4}

The distribution $S_x$ at each point $x$ of ss-manifold $M$ has the structure of $\mathbb R^m$ equipped with a nondegenerate metric $Q_x$ of index $\nu$. The presence of the nondegenerate metric yields the following trichotomy.
\begin{definition}
A horizontal tangent vector $w\in S_x$ is
\begin{itemize}
\item []{{\it spacelike} \qquad\quad if \quad $Q_x(w,w)>0$\quad or \quad $w=0$,}
\item []{{\it null}  \qquad\qquad\quad if \quad $Q_x(w,w)=0$\quad and \quad $w\neq 0$,}
\item []{{\it timelike}  \ \qquad\quad if \quad $Q_x(w,w)<0$,}
\item []{{\it nonspacelike} \ \quad if \ it is either timelike or null.}
\end{itemize}
\end{definition}
The set of all null vectors in $S_x$ is called {\it null-cone} at $x\in M$. The category into which a given tangent vector falls is called its {\it causal character}. The terminology is adapted from the relativity theory, and particularly in the Lorentz case, null-vectors are called {\it lightlike}. For the nice and complete presentation of the semi-Riemannian geometry see~\cite{Oneill}.

The covectors $\xi(x)\in T^*_x/ S^{\bot}_x$ receive the same causal structure according to the values of $\langle g_x\xi(x),\xi(x)\rangle$. The covectors $v\in S^{\bot}(x)$ we shall call annihilators to distinguish them from the null-covectors.

\begin{definition}
A horizontal tangent vector field $X\in S$ is spacelike, null or timelike if at each point $x\in M$ the vector $X(x)$ is spacelike, null or timelike respectively.
\end{definition}

\begin{definition}
A section $\xi\in T^*/ S^{\bot}$ is spacelike, null or timelike if at each point $x\in M$ the covector $\xi(x)$ is spacelike, null or timelike respectively.
\end{definition}

As we mentioned from the beginning, we work with the special class
of admissible curves that tangent to the distribution $S$ and that
we called horizontal curves. We borrow this name from the
sub-Riemannian geometry. We say that a horizontal curve $c(s)$ is
spacelike, null or timelike if the tangent vector $\dot c(s)$ is
spacelike, null or timelike respectively at each point of $c(s)$
where it exists. We can give the definition of the spacelike, null
or timelike curve using the causal structure of the cotangent
space $T^*$ according to the sign of $\langle
g_{c(s)}\xi(s),\xi(s)\rangle=Q_{c(s)}(\dot c(s),\dot c(s))$, where
$\dot c(s)=g_{c(s)}\xi(s)$. We call a horizontal curve the {\it
causal} if the tangent vector $\dot c(s)$ (the covector $\xi(s)$)
is nonspacelike

In the sub-Lorentzian case we also introduce (as in the classical Lorentz manifolds) the time orientation.
\begin{definition}
A time orientation on $(M,S,Q)$ is a continuous horizontal timelike section $\mathcal T$ os $S$.
\end{definition} If $M$ admits a time orientation $\mathcal T$, then $\mathcal T$ divides all nonspacelike horizontal vectors into two disjoint classes, called future directed and past directed. Namely, nonspacelike $w\in S_x$ is said to be {\it future} (respectively {\it past}) {\it directed} if $Q_x(\mathcal T(x),w)<0$ (respectively $Q_x(\mathcal T(x),w)>0$). We assume that any considered in the article sub-Lorentzian manifold $(M,S,Q)$ will be time oriented.

Since $g\colon T^*/S^{\bot}\to S$ is injective the time orientation can be brought to $T^*/S^{\bot}$.

\begin{definition} The globally defined section $\tau\in T^*$ such that $\mathcal T=g\tau$ is time orientation on $T^*/S^{\bot}$.
\end{definition} The covectors from $S^{\bot}$ we can consider as null-covectors.

The notion of arc length of a curve segment in Euclidean space
generalizes in a natural way to ss-manifolds. Since the term ``arc
length'' can be misleading since, for example, a null-curve has
length zero. Therefore, we use the name ``natural parameter'' in
stead of ``arc length''.
\begin{definition}\label{natpar}
Let $c:[a,b]\to M$ be a piecewise smooth curve segment in a ss-manifold $(M,S,Q)$. The natural parameter of $c(s)$ is $$L(c)=\int_{a}^{b}|Q(\dot c(s),\dot c(s))|^{1/2}\,ds.$$
\end{definition} As in the classical case it can be shown that
\begin{itemize}
\item [$\bullet$]{the natural parameter is not changing under the monotone reparameterization and}
\item [$\bullet$]{if $c(s)$ is a curve segment with $|\dot c(s)|=|Q(\dot c(s),\dot c(s))|^{1/2}>0$, there is a strictly increasing reparameterization function $h$ such that $\gamma=c(h)$ has $|\dot\gamma|=1$.}
\end{itemize} In the latter cases $\gamma$ is said to have {\it unit speed} or {\it natural reparameterization}.

Now we define the extremal using the Hamilton function. Given the cometric $g_x\colon T^*_x\to S_x$
we form the Hamiltonian function
\begin{equation}\label{hamfun1}
H(x,\xi)=\frac{1}{2}\langle g_x(\xi),\xi\rangle
\end{equation} on $T^*_x$. To emphasize the dependence of the cometric on $x$ we write $g(x)$ instead of $g_x$ when it is necessary. If we have the orthonormal basic $X_1,\ldots,X_{\nu},\ldots,X_m$ of $S$ we can write the Hamiltonian function in the form
\begin{equation}\label{hamfun2}
H(x,\xi)=-\frac{1}{2}\sum_{j=1}^{\nu}\langle X_j(x),\xi\rangle^2+\frac{1}{2}\sum_{j=\nu+1}^{m}\langle X_j(x),\xi\rangle^2,
\end{equation}
where $\nu$ is the index of $g_x$. Consider the Hamiltonian equations
\begin{equation*}
\dot x(s)=\nabla_{\xi}H(x,\xi),\qquad \dot\xi(s)=-\nabla_xH(x,\xi)
\end{equation*} that explicitly can be expressed as
\begin{eqnarray}\label{eq:ham}
\dot x^k(s) & = & g^{kj}(x(s))\xi_j(s),\qquad k=1,\ldots,n,\nonumber  \\
\dot\xi_k(s) & = & -\frac{1}{2}\frac{\partial g^{pq}(x(t))}{\partial x^k}\xi_p(s)\xi_q(s).
\end{eqnarray} An absolutely continuous curve $\Gamma(s)$ on $M$ satisfying~\eqref{eq:ham} is called a {\it characteristic} of $H$. In this paper we will consider only the bicharacteristics $\Gamma(s)$ such that $H(\Gamma(s))=H(x(s),\xi(s))\neq 0$ that are called in literature the {\it normal biextremals}. The detailed discussion of the structures of normal and abnormal geodesics see, for instance,~\cite{LiuSussmann, Montgomery2, Agrachev}. Since we work only with normal biextremals we will drop the word ``normal'' for shortness. If $H\in C^1(T^*)$ then an extremal, of $H$ is a curve $x(s)$ which is a projection on manifold of some biextremal $\Gamma(s)$ of~$H$. The bicharacteristics of a Hamiltonian $H\in C^k(T^*)$ are curves of class $C^k$ along which $H$ is constant. In this case it means that an extremal has a parametrization by the natural parameter. The next result is the consequence of this.

\begin{proposition}\label{prop:3.1}
If $\gamma\colon[a,b]\to M$ is a normal extremal, then either $Q_{\gamma(s)}(\dot\gamma(s),\dot\gamma(s))<0$ or $Q_{\gamma(s)}(\dot\gamma(s),\dot\gamma(s))=0$ or $Q_{\gamma(s)}(\dot\gamma(s),\dot\gamma(s))>0$ for all $s\in[a,b]$. Moreover, if $\gamma$ is nonspacelike in the sub-Lorentzian manifold, then it does not change its orientation.
\end{proposition}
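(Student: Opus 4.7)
My plan is to use the standard fact that the Hamiltonian is a first integral of its own flow, combined with the defining identity \eqref{eq:2.1} of the cometric, and then, for the orientation statement, a pointwise Lorentzian orthogonality argument in each fibre.

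First I would verify that $H$ is constant along any bicharacteristic $\Gamma(s)=(x(s),\xi(s))$. Differentiating $s\mapsto H(\Gamma(s))$ and substituting the Hamilton equations \eqref{eq:ham} gives
\[
\frac{d}{ds}H\bigl(x(s),\xi(s)\bigr)=\frac{\partial H}{\partial x^k}\dot x^k+\frac{\partial H}{\partial \xi_k}\dot\xi_k=\frac{\partial H}{\partial x^k}\frac{\partial H}{\partial \xi_k}-\frac{\partial H}{\partial \xi_k}\frac{\partial H}{\partial x^k}=0.
\]
Applying \eqref{eq:2.1} with $W=g(x(s))\xi(s)\in S_{x(s)}$ and then the definition \eqref{hamfun1} of $H$, I would identify
\[
Q_{\gamma(s)}\bigl(\dot\gamma(s),\dot\gamma(s)\bigr)=\langle g(x(s))\xi(s),\xi(s)\rangle=2H\bigl(\Gamma(s)\bigr),
\]
which is therefore constant on $[a,b]$. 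In particular its sign is constant on $[a,b]$, which is exactly the trichotomy asserted in the first part of the proposition.

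For the time-orientation statement, assume $(M,S,Q)$ is sub-Lorentzian ($\nu=1$) with time orientation $\mathcal T$ and that $\gamma$ is nonspacelike. I would examine the scalar function
\[
\varphi(s):=Q_{\gamma(s)}\bigl(\mathcal T(\gamma(s)),\dot\gamma(s)\bigr),
\]
whose sign dictates whether $\dot\gamma(s)$ is future or past directed. Since $\Gamma$ solves the smooth ODE \eqref{eq:ham}, the derivative $\dot\gamma=g(x)\xi$ is continuous, and $\mathcal T$ is continuous by definition, so $\varphi$ is continuous. The intermediate value theorem then reduces the claim to showing $\varphi(s_0)\ne 0$ for every $s_0\in[a,b]$.

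To this end I would invoke a fibrewise Lorentzian algebraic fact: in a real vector space equipped with a nondegenerate symmetric bilinear form of index~$1$, a timelike vector $v$ cannot be orthogonal to a nonzero nonspacelike vector $w$. This follows from writing $w=\alpha v+u$ with $u$ in the $Q_x$-orthogonal complement of $v$; since this complement is positive definite (the index being $1$), the relation $Q_x(v,w)=\alpha Q_x(v,v)=0$ forces $\alpha=0$, whence $w=u$ must be zero or spacelike, contradicting the hypothesis on $w$. Applied at any $s_0$ with $v=\mathcal T(\gamma(s_0))$ and $w=\dot\gamma(s_0)$ --- the latter being nonzero because the definitions of timelike and null both exclude the zero vector --- this yields $\varphi(s_0)\ne 0$, as required. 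The only mildly subtle point, and the one I would take care to justify, is that the nonspacelike assumption on $\gamma$ automatically forces $\dot\gamma\ne 0$; once this is in place, everything else reduces to the first-integral computation and the orthogonality lemma.
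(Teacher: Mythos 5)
Your proposal is correct and follows essentially the same route as the paper: the paper's proof is the single identity $\tfrac12 Q_{\gamma(s)}(\dot\gamma(s),\dot\gamma(s))=H(\gamma(s),\xi(s))$ together with the constancy of $H$ along bicharacteristics, and it dismisses the orientation claim as ``obvious.'' You simply write out the first-integral computation and supply the standard continuity-plus-Lorentzian-orthogonality argument (a timelike vector is never $Q$-orthogonal to a nonzero nonspacelike vector) that the paper leaves implicit; both steps are sound.
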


\begin{proof}
We have $$\frac{1}{2}Q_{\gamma(s)}(\dot\gamma(s),\dot\gamma(s))=\frac{1}{2}\langle \xi(s),g_{\gamma(s)}\xi(s)\rangle=H(\gamma(s),\xi(s))$$ which is constant along $\gamma$. The orientation preserving property of a smooth curve is obvious.
\end{proof} It is possible to reformulate Proposition~\ref{prop:3.1} in terms of cometric $g$.
\begin{proposition}\label{prop:3.2}
If $\gamma\colon[a,b]\to M$ is a normal extremal, then either $\langle \xi(s),g_{\gamma(s)}\xi(s)\rangle<0$ or $\langle \xi(s),g_{\gamma(s)}\xi(s)\rangle=0$ or $\langle \xi(s),g_{\gamma(s)}\xi(s)\rangle>0$ for all $s\in[a,b]$.
\end{proposition}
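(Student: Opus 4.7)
The plan is to reduce Proposition~\ref{prop:3.2} directly to Proposition~\ref{prop:3.1} by observing that the quantity $\langle \xi(s), g_{\gamma(s)}\xi(s)\rangle$ coincides with $Q_{\gamma(s)}(\dot\gamma(s),\dot\gamma(s))$ along any biextremal. Since the latter has constant sign on $[a,b]$ by Proposition~\ref{prop:3.1}, so does the former.

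To establish the identity, I would first recall from the Hamiltonian system \eqref{eq:ham} that along a biextremal $(\gamma(s),\xi(s))$ the horizontal velocity satisfies
\begin{equation*}
\dot\gamma^k(s) = g^{kj}(\gamma(s))\xi_j(s), \qquad \text{i.e. } \dot\gamma(s) = g_{\gamma(s)}\xi(s).
\end{equation*}
In particular $\dot\gamma(s)\in S_{\gamma(s)}$, so $Q_{\gamma(s)}(\dot\gamma(s),\dot\gamma(s))$ is defined. Setting $W=\dot\gamma(s)=g_{\gamma(s)}\xi(s)\in S_{\gamma(s)}$ in the defining relation \eqref{eq:2.1} of the cometric gives
\begin{equation*}
Q_{\gamma(s)}(\dot\gamma(s),\dot\gamma(s)) = Q_{\gamma(s)}(W,g_{\gamma(s)}\xi(s)) = \langle W,\xi(s)\rangle = \langle g_{\gamma(s)}\xi(s),\xi(s)\rangle,
\end{equation*}
which is exactly the quantity appearing in the statement.

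Once this identity is in place, the three trichotomy cases for $\langle \xi(s), g_{\gamma(s)}\xi(s)\rangle$ are in one-to-one correspondence with the three trichotomy cases for $Q_{\gamma(s)}(\dot\gamma(s),\dot\gamma(s))$, and Proposition~\ref{prop:3.1} supplies the constancy of sign throughout $[a,b]$. There is really no serious obstacle here: the content is entirely in recognizing that the Hamiltonian $H(\gamma(s),\xi(s))=\tfrac12\langle g_{\gamma(s)}\xi(s),\xi(s)\rangle$ is a first integral of \eqref{eq:ham} and that, through \eqref{eq:2.1} and the relation $\dot\gamma=g\xi$, this first integral is nothing but $\tfrac12 Q(\dot\gamma,\dot\gamma)$. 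The only point requiring minor care is that $\xi(s)$ may have a component in $S^{\bot}_{\gamma(s)}$, but such a component is annihilated by $g_{\gamma(s)}$ and contributes nothing to either side of the identity, so the argument is unaffected.
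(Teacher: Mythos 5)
Your argument is correct and is essentially the paper's own: the paper presents Proposition~\ref{prop:3.2} as a mere reformulation of Proposition~\ref{prop:3.1}, whose proof is exactly the observation that $\tfrac12 Q_{\gamma(s)}(\dot\gamma,\dot\gamma)=\tfrac12\langle\xi,g_{\gamma(s)}\xi\rangle=H(\gamma(s),\xi(s))$ is constant along the biextremal. Your explicit verification of the identity via~\eqref{eq:2.1} and $\dot\gamma=g\xi$ just spells out what the paper leaves implicit.
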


Let us define the energy for the curve $c:[a,b]\to M$ by
\begin{equation}\label{eq:ener}
E(c)=\int_{a}^{b}|Q(\dot c(s),\dot c(s))|\,ds.
\end{equation} In semi-Riemannian geometry extremals $\gamma(s)$ are defined as curves which have parallel tangent vector field $(\dot\gamma)$ or, equivalently, which have the acceleration zero: $\ddot{\gamma}(s)=0$. It is true
that semi-Riemannian extremals lift to solutions of~\eqref{eq:ham} on the cotangent bundle. Thus the definition of extremals like Hamilton extremals is correct generalization. For the sub-Riemannian and sub-Lorentzian cases see~\cite{Groch3, Montgomery, Strichartz}. Also, if we formulate the variational problem of minimizing energy $E(c)$ over all smooth horizontal curves joining points $p$ and $q$ in $M$
then the associated Euler equation is~\eqref{eq:ham}. Notice also that if we differentiate the first equation and substitute the second we obtain \begin{equation}\label{eq:geod}\ddot x^k(s)+\Gamma^k(\xi,\xi)=0\end{equation} which is the analogue of the equation of the extremals in semi-Riemannian geometry. Notice, that we can not solve~\eqref{eq:geod} for $\xi$ in terms of $x$ in any way. Thus~\eqref{eq:geod} does not reduce to the equation in $x$ alone. Neither~\eqref{eq:geod} together with $\dot x=g\xi$ is equivalent to~\eqref{eq:ham}.

Given $p\in M$, $u\in T^*_p$, and the coordinate system with the origin at $p$, the existence and uniqueness theorem for ordinary differential equations guarantees that the solution exists and is unique on an interval around zero provided the initial conditions $x(0)=p$, $\xi(0)=u$. As on sub-Riemannian manifolds the solution to~\eqref{eq:ham} can be continued as long as $x(t)$ remains in~$M$.

\begin{lemma}
Let $x(s)$ be a normal extremal for $0\leq s<a$ and suppose $x(s)$ remains inside a compact subset of $M$. Then $x(s)$ can be extended beyond $s=a$.
\end{lemma}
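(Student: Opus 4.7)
The plan is to reduce the extendibility of $x(s)$ to the extendibility of a biextremal lift $\Gamma(s)=(x(s),\xi(s))$ in $T^*M$ and then apply the standard ODE extension theorem. Since the Hamiltonian system~\eqref{eq:ham} is smooth on $T^*M$, a maximal solution $\Gamma$ exists as long as $\Gamma$ remains in a compact subset of $T^*M$. Thus it suffices to show that $\Gamma([0,a))$ is contained in some compact subset of the cotangent bundle.

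By hypothesis $x(s)\in K$ with $K\subset M$ compact, so the remaining task is to bound $\xi(s)$ uniformly on $[0,a)$. I would cover $K$ by finitely many coordinate charts; in each chart the functions $g^{jk}$ and $\partial_l g^{jk}$ are smooth and uniformly bounded. Along the trajectory the Hamiltonian $H(x(s),\xi(s))=E_0$ is constant, and the second equation of~\eqref{eq:ham} reads $\dot\xi_k=-\tfrac{1}{2}(\partial_k g^{pq})\xi_p\xi_q$. Combining the conservation of $H$ with the null-bundle identities of Lemma~\ref{lemma:2.1}, I would set up a Gronwall-type differential inequality for a suitable norm of $\xi$ along $\Gamma$, whose integration over the finite interval $[0,a)$ yields a uniform bound $|\xi(s)|\le C$. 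With $x(s)\in K$ and $|\xi(s)|\le C$, the trajectory $\Gamma$ stays inside a compact subset of $T^*M$, and the classical ODE extension theorem extends $\Gamma$, and hence its projection $x$, past $s=a$.

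The main obstacle is exactly the Gronwall estimate in the indefinite setting. In the sub-Riemannian case treated in~\cite{Strichartz}, the positivity $H\ge 0$ directly controls the class of $\xi$ modulo $S^{\bot}$, and the argument closes cleanly. In our semi-Riemannian setting $H$ is a hyperbolic quadratic form in $\xi$ whose level sets $\{H=E_0\}$ are non-compact in the fibres, so conservation of $H$ alone does not bound $|\xi|$; moreover a naive estimate of $|\dot\xi|$ is merely quadratic in $|\xi|$ and gives only finite-time blow-up bounds. The bound must therefore be extracted by a finer use of the structural identities for $g$ along $S^{\bot}$ from Lemma~\ref{lemma:2.1} (in particular $(\partial_p g^{jk})v_k w_j=0$ for $v,w\in S^{\bot}$), together with the freedom to modify the biextremal lift $\xi$ by sections of $S^{\bot}$, so as to absorb the potentially unbounded component of $\xi$ along the null bundle.
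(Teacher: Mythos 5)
Your reduction to a compactness statement in $T^*M$ and your diagnosis of the obstacles are both correct, but the proof has a genuine gap: the uniform bound on $\xi(s)$ is never actually established. You rightly observe that conservation of the indefinite Hamiltonian does not confine $\xi$ to a compact set in the fibre and that the raw equation $\dot\xi_k=-\tfrac12(\partial_k g^{pq})\xi_p\xi_q$ is quadratic in $\xi$, and you then assert that ``a finer use'' of Lemma~\ref{lemma:2.1} \emph{must} produce the bound --- but that is precisely the content of the lemma and it is left unproved. Moreover, the one concrete mechanism you offer, the ``freedom to modify the biextremal lift $\xi$ by sections of $S^{\bot}$,'' is not available here: the biextremal solves the Hamiltonian system~\eqref{eq:ham} and is rigidly determined by its initial covector, so you may \emph{decompose} $\xi$ along $S^{\bot}$ but not alter its $S^{\bot}$-component.

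What closes the argument (and is what the paper does) is exactly such a decomposition. Choose over the compact set $K$ a frame $v^{(1)},\dots,v^{(n-m)}$ of $S^{\bot}$, completed by $u^{(1)},\dots,u^{(m)}$ to a frame of $T^*$, and write $\xi(s)=\sum_j a_j(s)u^{(j)}(x(s))+\sum_l b_l(s)v^{(l)}(x(s))$. Since $gv^{(l)}=0$, the conserved Hamiltonian involves only the $a_j$, namely $H=\tfrac12\sum_{j,k}\langle gu^{(j)},u^{(k)}\rangle a_ja_k$, and this is used to bound the $a_j$. Substituting the decomposition into the second equation of~\eqref{eq:ham} and pairing with the $v^{(l)}_k$, the purely quadratic terms in $b$ vanish because $(\partial_k g^{pq})v_pw_q=0$ for $v,w\in S^{\bot}$ by Lemma~\ref{lemma:2.1}$(c)$; what survives is a \emph{linear} system $\dot b=Ab+C$ whose coefficients depend only on the already-bounded $a_j$, the frame, and $g$ on $K$, so $b$ is bounded on the finite interval by the standard linear Gronwall estimate. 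This linearization of the $b$-equation is the step missing from your proposal; without it the ``Gronwall-type inequality'' you invoke is only quadratic and yields no bound. (You are right, incidentally, that deducing boundedness of the $a_j$ from constancy of an indefinite form requires more care than a one-line appeal to conservation of $H$ --- the paper's own treatment of that point, via preservation of the causal character, is terse --- but flagging a difficulty is not the same as resolving it.)
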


\begin{proof}
Over the compact set $K\subset M$ choose an orthonormal basis $v^{(1)}(x),\ldots,v^{(n-k)}(x)$ of $S^{\bot}_x$ and complete to an orthonormal basis of $T^*_x$ by adding $u^{(1)}(x),\ldots,u^{(k)}(x)$. By definition all sections are smoothly varying on the compact set and hence bounded. Then the section $\xi(s)$ along the extremal $x(s)$ can be written as
\begin{equation}\label{eq:xi}
 \xi(s)=\sum_{j=1}^{m}a_j(s)u^{(j)}(x(s))+\sum_{l=1}^{n-m}b_l(s)v^{(l)}(x(s)),
\end{equation} where $m$ is the rank of $S$. Consider~\eqref{eq:ham} as a system of equations for $x(s)$, $a_j(s)$, and $b_l(s)$. The functions $x^k(s)$ are uniformly bounded on $K$. Let us show that the functions $a_j(s)$ and $b_l(s)$ are also bounded. We have $$H(x(s),\xi(s))=\frac{1}{2}\sum_{j=1}^{m}\langle gu^{(j)},u^{(k)}\rangle a_j(s)a_k(s)$$ by~\eqref{eq:xi}. Since extremals do not change the causal character and the Hamilton is constant along them, the value of the matrix $g\big(u^{(j)}(x(s)),u^{(k)}(x(s))\big)$ is bounded from zero on $K$. It follows that $a_j(s)$ are uniformly bounded along extremals. Let us show that $b_l(s)$ are bounded. We write $\xi_k(s)$ as
\begin{equation}\label{eq:xik}
 \xi_k(s)=\sum_{j=1}^{m}a_j(s)u^{(j)}_k(x(s))+\sum_{l=1}^{n-m}b_l(s)v^{(l)}_k(x(s)),
\end{equation} where $u^{(j)}_k$ and $v^{(l)}_k$ are coordinates of $u^{(j)}$ and $v^{(l)}$ in the local chart coordinates. We substitute~\eqref{eq:xik} in the second equation of~\eqref{eq:ham} and take into account the first one also. Notice, that the terms involving $gv$ and $\dfrac{\partial g^{pq}}{\partial x^k}v^{(l)}_pv^{(m)}_q$ vanish by Lemma~\ref{lemma:2.1} $c)$ since $v\in S^{\bot}$. Finally, we get
\begin{eqnarray*}
\sum_{j=1}^{m}\dot a_ju^{(j)}_k &  + & \sum_{j=1}^{n-m}\dot b_jv^{(j)}_k+\sum_{j=1}^{m}\sum_{l=1}^{}a_j\frac{\partial u^{(j)}_{k}}{\partial x^r}g^{rp}a_lu^{(l)}_p+
\sum_{j=1}^{n-m}\sum_{l=1}^{m}b_j\frac{\partial v^{(j)}_{k}}{\partial x^r}g^{rp}a_lu^{(l)}_p\\
& = & -\frac{1}{2} \frac{\partial g^{pq}}{\partial x^k}\Big(\sum_{j=1}^{m}a_ju^{(j)}_p\sum_{l=1}^{m}a_lu^{(l)}_q+\sum_{j=1}^{m}a_ju^{(j)}_p\sum_{l=1}^{n-m}b_lv^{(l)}_q+\sum_{j=1}^{n-m}b_jv^{(j)}_p\sum_{l=1}^{m}a_lu^{(l)}_q\Big).
\end{eqnarray*} If we dot both sides of equation with $v^{(1)}_k,v^{(2)}_k,\ldots,v^{(n-m)}_k$, then we obtain the linear system $$\dot b=Ab+C,\quad b=(b_1,\ldots,b_{n-m}),$$ where the matrix $A$ and the vector function $C$ linearly depend on bounded functions $a_j$, $u^{(j)}$, $v^{(j)}$ and hence $A$ and $C$ are bounded. The linear system of the first order differential equations with bounded coefficients has bounded solution. We conclude that $b_j$ are bounded for $j=1,\ldots,n-m$. Thus all the functions $x^k(t)$ and $\xi_k(t)$ are uniformly bounded, and the local existence theorem implies the solution of~\eqref{eq:ham} extends.
\end{proof}

Now we can define the exponential map.
\begin{definition}
If $p\in M$, let $D_p$ be the set of covectors $w$ in $T^*_p$ such that the extremal $x_{w}(s)$ is defined at least on $[0,1]$ and $x(0)=p$, $\xi(0)=w$. The exponential map of $M$ at $p$ is the function $$\exp_p\colon D_p\to M,\quad\text{such that}\quad \exp_p(w)=x_w(1).$$
\end{definition} The set $D_p$ is the largest subset of $T^*_p$ on which $\exp_p$ can be defined. Fix $w\in T^*_p$ and $\tau\in \mathbb R$. Then the extremal
 $s\to x_w(s\tau)$ is such that $\tau\xi(0)=\tau w$. Hence $x_{\tau w}(\tau)=x_{w}(s\tau)$ for all $\tau$ and $s$ where the both sides are well defined. Particularly $$\exp_p(sw)=x_{sw}(1)=x_{w}(s).$$

As in the sub-Riemannian geometry the exponential map is always
differentiable, since the solution of the Hamiltonian system
depends smoothly on the initial data. But the exponential map is
not a diffeomorphism at the origin. The reason is that all the
extremals emanating from $p$ must have tangent vectors in $S_p$,
but for any annihilator $v\in S^{\bot}$ we have
$$\exp_p(v)=x_v(1)=p,\quad\text{since}\quad\dot x^k=0\ \
\text{by}~\eqref{eq:ham}.$$ We prove the following analogue of
Gauss lemma. In lemma we use the identification of a cotangent
space $T^*_p$ at $p$ with the tangent to $T^*_p$ space
$T_u(T^*_p)$ at point $u\in T^*_p$. The covector $w\in T^*_p$ at
point $u\in T^*_p$ is identified with the vector $w\in
T_u(T^*_p)$. The radial vector $r\in T_u(T^*_p)$ means that it is
a scalar multiple of a covector $u\in T^*_p$.
\begin{lemma}\label{gauss}
Let $u$ be a cotangent
vector in $T^*_p$ such that $u\neq 0$ and lies inside $D_p$. Let $r$ be a radial vector and $w$ be any other covector at point $u\in T^*_p$. Then
\begin{itemize}
\item [(i)]{$$\langle g_pr,w\rangle=Q_{\exp_pu}\big(d(\exp_p)_uw,d(\exp_p)_ur\big)$$ provided $d(\exp_p)_uw\in S_{\exp_pu}$}
\item [(ii)]{$$\langle g_pr,w\rangle=\langle d(\exp_p)_uw,\xi \rangle$$ where $\xi$ is a cotangent lift of the extremal $t\mapsto \exp_p(tu)$ at $t=1$.}
\end{itemize}
\end{lemma}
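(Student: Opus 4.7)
The plan is to prove (ii) by a one-parameter variation of the initial covector and then deduce (i) from (ii) via the defining relation \eqref{eq:2.1} of the cometric.

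I introduce the variation $s \mapsto u + sw$ in $T^*_p$ and the associated family of bicharacteristics $(x_{u+sw}(t), \xi_{u+sw}(t))$ with initial data $(p, u+sw)$. Set $X(t) := \partial_s x_{u+sw}(t)\big|_{s=0}$ and $\Xi(t) := \partial_s \xi_{u+sw}(t)\big|_{s=0}$, so that $X(0)=0$, $\Xi(0)=w$, and $X(1) = d(\exp_p)_u(w)$. Let $\xi(t) := \xi_u(t)$ denote the cotangent lift of $t \mapsto \exp_p(tu) = x_u(t)$. The identity to establish is
$$\frac{d}{dt}\bigl\langle X(t), \xi(t)\bigr\rangle \;=\; \langle g_p u, w\rangle \quad \text{(constant in } t\text{)},$$
which, integrated from $0$ to $1$ with initial value $\langle X(0), \xi(0)\rangle = 0$, yields $\langle d(\exp_p)_u w, \xi(1)\rangle = \langle g_p u, w\rangle$. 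This is exactly (ii) in the case $r = u$; the general radial case $r = cu$ follows by linearity in $r$ on both sides.

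Two observations combine to prove the key identity. First, the Hamiltonian is conserved along each bicharacteristic, so $(t,s) \mapsto H(x_{u+sw}(t), \xi_{u+sw}(t))$ equals $H(p, u+sw) = \tfrac12\langle g_p(u+sw), u+sw\rangle$ and is therefore $t$-independent; differentiating in $s$ at $s=0$ gives
$$(\partial_{x^a}H)\,X^a + (\partial_{\xi_k}H)\,\Xi_k \;=\; \langle g_p u, w\rangle \qquad \text{for all } t,$$
with the partial derivatives of $H$ evaluated at $(x(t),\xi(t))$. Second, differentiating Hamilton's system \eqref{eq:ham} in $s$ and using $\partial_{x^a}H = \tfrac12(\partial_a g^{pq})\xi_p\xi_q = -\dot\xi_a$ and $\partial_{\xi_k}H = g^{kj}\xi_j = \dot x^k$, a direct index computation gives
$$\frac{d}{dt}(X^k\xi_k) \;=\; \dot X^k \xi_k + X^k \dot\xi_k \;=\; (\partial_{x^a}H)\,X^a + (\partial_{\xi_k}H)\,\Xi_k.$$
Combining these two equalities proves the key identity.

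To pass from (ii) to (i): since $\exp_p(tu) = x_u(t)$, one has $d(\exp_p)_u(u) = \dot x_u(1) = g_{\exp_p u}\,\xi(1)$. Under the hypothesis $d(\exp_p)_u w \in S_{\exp_p u}$, the defining property \eqref{eq:2.1} of the cometric gives
$$Q_{\exp_p u}\bigl(d(\exp_p)_u w,\, d(\exp_p)_u u\bigr) \;=\; Q_{\exp_p u}\bigl(d(\exp_p)_u w,\, g_{\exp_p u}\xi(1)\bigr) \;=\; \langle d(\exp_p)_u w, \xi(1)\rangle,$$
which by (ii) equals $\langle g_p u, w\rangle$. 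Scaling in $c$ handles the general radial vector $r = cu$. The main obstacle in the whole argument is the second observation above: one must verify carefully that $\tfrac{d}{dt}\langle X,\xi\rangle$ reduces cleanly to the full directional derivative $(\partial_{x^a}H)X^a + (\partial_{\xi_k}H)\Xi_k$ of $H$ along the variation field $(X,\Xi)$. This is a short but delicate manipulation of indices, crucially using the symmetry of $g^{pq}$; once this is in hand, everything else is either definitional or an immediate application of \eqref{eq:2.1}.
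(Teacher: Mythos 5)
Your proof is correct and follows essentially the same route as the paper: both compute $\tfrac{d}{dt}\langle \partial_s x,\xi\rangle$ along the variation $s\mapsto u+sw$, identify it via Hamilton's equations with $\partial_s H$, use conservation of $H$ along bicharacteristics to evaluate it as $\langle g_pu,w\rangle$, and integrate from $t=0$ to $t=1$. The only cosmetic difference is that you derive (i) from (ii) via \eqref{eq:2.1}, while the paper proves (i) first and observes that (ii) reduces to the same identity; your index verification of $\tfrac{d}{dt}(X^k\xi_k)=(\partial_{x^a}H)X^a+(\partial_{\xi_k}H)\Xi_k$ (using the quadratic structure of $H$ and symmetry of $g$) checks out.
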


\begin{proof} Let us prove (i). Since $r$ is radial, we can assume $r=u$.
Take the curve $v(s)=u+sw$ in~$T^*_p$. Let us suppose that the
exponential mapping is defined in the cylindrical neighborhood
$D_p\times [0,1]$. Consider the parameterized surface $x: A\to M$,
$A=\{(t,s): 0\leqslant t\leqslant1,-\varepsilon<s<\varepsilon\}$
given by $x(t,s):=\exp_p(t(u+sw))$. Note that $$\dfrac{\partial
\big(t(u+sw)\big)}{\partial s}(1,0)=w,\quad \dfrac{\partial
\big(t(u+sw)\big)}{\partial t}(1,0)=u,$$ and the curves $t\mapsto
x(t,s)$ are extremals for any fixed $s$ starting from the point
$x(0,s)=p$ with the initial covectors $u+sw$. Then
$$\dfrac{\partial x}{\partial s}(1,0)=d(\exp_p)_u\dfrac{\partial t(u+sw)}{\partial s}(1,0)=d(\exp_p)_uw,$$
$$\dfrac{\partial x}{\partial t}(1,0)=d(\exp_p)_u\dfrac{\partial t(u+sw)}{\partial t}(1,0)=d(\exp_p)_uu.$$
Thus, we need to show $\langle g_pu,w\rangle=Q_{\exp_pu}\big(\dfrac{\partial x}{\partial s},\dfrac{\partial x}{\partial t}\big)(1,0)$. Let $\xi(t,s)$ be a cotangent lift of the extremal $t\mapsto \exp_p(tv(s))$, particularly $\xi(1,s)=u+sw$. Then $Q_{\exp_pu}\big(\dfrac{\partial x}{\partial s},\dfrac{\partial x}{\partial t}\big)(1,0)=\langle \dfrac{\partial x}{\partial s},\xi\rangle(1,0)$ and our main aim becomes to show
\begin{equation}\label{eq:main}
\langle g_pu,w\rangle=\langle \dfrac{\partial x}{\partial s},\xi\rangle(1,0).
\end{equation}
We denote $f(t,s)=\langle \dfrac{\partial x}{\partial s},\xi\rangle(t,s)$ and calculate the derivative $\dfrac{\partial f}{\partial t}(t,0)$.
We have
\begin{gather*}
\frac{\partial f}{\partial t}(t,s)=\frac{\partial\xi_k}{\partial t}(t,s)\dfrac{\partial x^k}{\partial s}(t,s)+\xi_p(t,s)\dfrac{\partial^2x^p}{\partial s\partial t}(t,s).
\end{gather*}
Replacing $\dfrac{\partial\xi_k}{\partial t}(t,s)$ and $\dfrac{\partial x^p}{\partial t}(t,s)$ from the Hamilton-Jacobi equations \eqref{eq:ham} we obtain
\begin{eqnarray}\label{eq:333}
\frac{\partial f}{\partial t}(t,s) & = & -\dfrac{1}{2}\dfrac{\partial g^{pq}(x)(t,s)}{\partial x^k}\xi_p(t,s)\xi_q(t,s)\dfrac{\partial x^k}{\partial s}(t,s)+\xi_p(t,s)\dfrac{\partial}{\partial s}(g^{pq}(x(t,s))\xi_q(t,s))\nonumber \\
& = & \frac{\partial}{\partial s}\Big(\frac{1}{2}\langle g(x)\xi(t,s),\xi(t,s)\rangle\Big)\qquad\text{for any}\ \ t\ \ \text{and}\ \ s.
\end{eqnarray} Since the Hamilton $\frac{1}{2}\langle g(x)\xi(t,s),\xi(t,s)\rangle$ is constant along the extremal, then~\eqref{eq:333} can be written as $$\frac{\partial f}{\partial t}(t,s)=\frac{\partial}{\partial s}\Big(\frac{1}{2}\langle g(x)\xi(t,s),\xi(t,s)\rangle\Big)=\frac{\partial}{\partial s}\Big(\frac{1}{2}\langle g_p(u+sw),(u+sw)\rangle\Big)\qquad\text{for any}\ \ t.$$ Then $$\frac{\partial f}{\partial t}(t,0)=\frac{\partial}{\partial s}\Big(\frac{1}{2}\langle g_p(u+sw),(u+sw)\rangle\Big)(t,0)=\langle g_pu,w\rangle\qquad\text{for any}\ \ t.$$
We have $$f(0,0)=\langle \dfrac{\partial x}{\partial s},\xi\rangle(0,0)=\langle d(\exp_p)_u tw,u\rangle(0,0)=0$$ that implies $$f(t,0)=t\langle g_pu,w\rangle\quad\Longrightarrow\quad f(1,0)=\langle \dfrac{\partial x}{\partial s},\xi\rangle(1,0)=\langle g_pu,w\rangle.$$ We proved~\eqref{eq:main} and hence (i).

To prove (ii) we argue in a similar way. Take the curve
$v(s)=u+sw$ in $T^*_p$ and parameterized surface
$x(t,s)=\exp_p(t(u+sw))$. Let $\xi(1,s)$ be a cotangent lift of
the extremal $t\mapsto \exp_p(tv(s))$ at $t=1$. Since
$d(\exp_p)_uw=\dfrac{\partial x}{\partial s}(1,0)$ the statement
(ii) is reduced to ~\eqref{eq:main}.
\end{proof}

Let $c(t)$ be a $C^1$ piecewise curve in $M$ for $t\in (a,b)$, where $(a,b)$ is an interval in $\mathbb{R}$.
We remind that a curve $c(t)$ is called horizontal if $\dot{c}(t)\in S_x$ for any $t\in(a,b)$. A section $\xi(t)$
is called a cotangent lift of $c(t)$ if $\xi(t)\in T^*_{x(t)}$ and $g_x\xi=\dot{x}(t)$ for every $t$ where it is defined.
The notion of the natural parameter or arc length~\eqref{natpar} for $c(t)\colon (a,b)\to M$ can be reformulated as follows
$$L(c)=\int\limits_a^b\langle g_{c(t)}\xi(t),\xi(t)\rangle^{1/2}\,dt.$$

Let us focus for the moment on the case of sub-Lorentzian manifold. At each point $p\in M$ the distribution $S_p$ and the cotangent subbundle $T^*_p/S^{\bot}_p$ carry the structure of the Lorentz vector space and thus inherit the typical features of the Lorentz structure. Since the orthogonal complement $w^{\bot}$ to any timelike vector $w$ is spacelike then the vector space $S_p$ can be decomposed into the direct sum $\mathbb Rm\oplus w$. The same regards the cotangent vector space $T^*_p/S^{\bot}_p$. We define the future timecone in $S_p$ by
$$
C(\mathcal T(p))=\{X(p)\in S_p\colon Q_p(\mathcal T(p),X(p))<0\},\quad\text{where}\ \ \mathcal T\ \ \text{is the time orientation on}\ \ S_p.
$$
Analogously the future timecone in $T^*_p$ is
$$
C(\tau(p))=\{w\in T^*_p\colon \langle g_pw,\tau(p))<0\},\quad\text{where}\ \ \tau\ \ \text{is the time orientation on}\ \
T^*_p/S^{\bot}_p.$$ There is a consequence that vectors (covectors) $v,w$ are timelike if and only if $Q(v,w)<0$ ($\langle gv,w\rangle<0$). In vector spaces with positively definite metric the Schwarz inequality permits the definition of the angle $\theta$ between $v$ and $w$ as the unique number $0\leq \theta\leq \pi$. The analogues Lorentz result is as follows.
\begin{proposition}
Let $v$ and $w$ be timelike vectors in a Lorentz vector space equipped with the scalar product $\langle v,w\rangle$. Then
\begin{itemize}
\item [(1)]{$|\langle v,w\rangle|\geq |v||w|$, where $|v|=|\langle v,v\rangle|^{1/2}$, $|w|=|\langle w,w\rangle|^{1/2}$. The equality is possible if and only if $v$ and $w$ are collinear.}
\item [(2)]{There is a unique number $\vartheta>0$, called hyperbolic angle between $v$ and $w$, such that $$
\langle v,w\rangle=-|v||w|\cosh\vartheta.$$}
\end{itemize}
\end{proposition}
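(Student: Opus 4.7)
The plan is to prove both claims from a single orthogonal decomposition of $w$ with respect to $v$, exploiting two facts recalled in the paragraph just before the proposition: the line spanned by a timelike vector is nondegenerate, and its orthogonal complement is a spacelike (hence positive definite) subspace.

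Fix $v$ timelike, so $\langle v,v\rangle<0$, and split the ambient Lorentz space as $\mathbb R v\oplus v^{\bot}$, writing $w=\alpha v+u$ with $u\in v^{\bot}$. A direct computation gives
\[
\langle v,w\rangle=\alpha\langle v,v\rangle,\qquad \langle w,w\rangle=\alpha^{2}\langle v,v\rangle+\langle u,u\rangle.
\]
Setting $|v|^{2}=-\langle v,v\rangle>0$ and $|w|^{2}=-\langle w,w\rangle>0$, and using that $u$ is spacelike so $\langle u,u\rangle\geq 0$, the second identity rearranges to $|w|^{2}=\alpha^{2}|v|^{2}-\langle u,u\rangle\leq\alpha^{2}|v|^{2}$. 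Multiplying through by $|v|^{2}>0$,
\[
|v|^{2}|w|^{2}\leq\alpha^{2}|v|^{4}=\langle v,w\rangle^{2},
\]
which upon taking square roots is the reverse Cauchy--Schwarz inequality of~(1). Equality forces $\langle u,u\rangle=0$, and since the restriction of the metric to $v^{\bot}$ is positive definite this gives $u=0$, i.e.\ $w=\alpha v$; conversely, collinearity yields equality by substitution.

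For part~(2), part~(1) already gives $|\langle v,w\rangle|\geq|v|\,|w|>0$, so $\langle v,w\rangle\neq 0$. After possibly replacing $w$ by $-w$ I may assume $v$ and $w$ lie in the same timecone, so that $\langle v,w\rangle<0$. The prescription
\[
\cosh\vartheta=-\frac{\langle v,w\rangle}{|v|\,|w|}\geq 1
\]
then defines a unique $\vartheta\geq 0$ by the strict monotonicity of $\cosh$ on $[0,\infty)$, with $\cosh 0=1$ and $\cosh\vartheta\to\infty$; the strict inequality $\vartheta>0$ corresponds exactly to the non-collinear case.

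The main obstacle I foresee is not the algebra but the sign bookkeeping in~(2): to write $\langle v,w\rangle=-|v|\,|w|\cosh\vartheta$ one must know that $\langle v,w\rangle$ is strictly negative whenever $v,w$ lie in the same timecone. The cleanest route is to note that each timecone is connected and that, by part~(1), the continuous function $w\mapsto\langle v,w\rangle$ never vanishes on timelike vectors; hence its sign on the timecone containing $v$ is constant and matches the sign of $\langle v,v\rangle=-|v|^{2}<0$.
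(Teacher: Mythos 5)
Your proof is correct. Note that the paper itself offers no proof of this proposition: it is quoted as a standard fact of Lorentz linear algebra (the reverse Cauchy--Schwarz inequality), with the reader referred to O'Neill's book for the general theory. Your argument via the orthogonal splitting $w=\alpha v+u$, $u\in v^{\bot}$, together with the positive definiteness of the metric on $v^{\bot}$, is exactly the standard proof, and all the algebra checks out: $|w|^{2}=\alpha^{2}|v|^{2}-\langle u,u\rangle\leq\alpha^{2}|v|^{2}$ gives $(1)$, with equality forcing $u=0$. Two remarks on part $(2)$, both of which you essentially caught. First, as literally stated the identity $\langle v,w\rangle=-|v||w|\cosh\vartheta$ can only hold when $\langle v,w\rangle<0$, i.e.\ when $v$ and $w$ lie in the same timecone; the proposition tacitly assumes this (the paper's preceding sentence about ``$v,w$ timelike iff $Q(v,w)<0$'' is itself a loose statement of the same-timecone criterion), and your connectedness argument for the constancy of the sign of $\langle v,\cdot\rangle$ on a timecone is the right way to justify it --- preferable to ``replacing $w$ by $-w$,'' which alters the hypothesis rather than verifying it. Second, you correctly observe that $\vartheta=0$ occurs precisely for positively collinear $v,w$, so the paper's ``unique $\vartheta>0$'' should really read ``unique $\vartheta\geq 0$''; this is a slip in the statement, not in your proof.
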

Consider a piecewise smooth timelike curve $c(t)$. The timelike means not only that every $\dot c(t)$ is timelike, but that at each break $t_i$ of $c$ $$Q_{c(t_i)}(\dot c(t_i^-),\dot c(t_i^+)<0.$$ Here the first vector derives from $c$ on the  interval $[t_{i-1},t_i]$ before break, and the second from the interval after break $[t_{i},t_{i+1}]$. Thus $\dot c$ does not switch timecone at a break. Similarly we require that a piecewise smooth causal curve does not switch causal cones at a break.

\begin{lemma}
Let $p$ be a point at Lorentz manifold $M$. Suppose that $\gamma\colon [0,b]\in T_p^*$ is a piecewise smooth curve starting at the origin such that $\alpha=exp_p\circ\gamma$ is timelike. Then $\gamma$ remains in a single timecone of $T_p^*$.
\end{lemma}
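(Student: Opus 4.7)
The plan is to use the Gauss lemma (Lemma~\ref{gauss}) to turn the hypothesis that $\alpha=\exp_p\circ\gamma$ is timelike into strict monotonicity of the scalar
\begin{equation*}
q(s):=\langle g_p\gamma(s),\gamma(s)\rangle,
\end{equation*}
and then read off the cone confinement of $\gamma$ from a connectedness argument. Recall that under the identification $T_p^*/S^\bot_p\cong S_p$ through $g_p$, a covector $w$ is timelike (resp.\ future directed) iff $g_pw$ is.

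\emph{Initial behaviour.} Since $\gamma(0)=0$ and $\dot\alpha(0)=d(\exp_p)_0\dot\gamma(0)=g_p\dot\gamma(0)$ is timelike by hypothesis, a Taylor expansion gives $q(0)=q'(0)=0$ and $q''(0)=2\langle g_p\dot\gamma(0),\dot\gamma(0)\rangle<0$, so $q<0$ on some initial interval $(0,\varepsilon)$. Thus $\gamma$ enters exactly one of the two timewedges of $T_p^*$, say the one selected by $\langle\mathcal{T}(p),\gamma(s)\rangle<0$.

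\emph{Gauss identity and monotonicity.} Set $x_s(t):=\exp_p(t\gamma(s))$, so $\dot x_s(1)=d(\exp_p)_{\gamma(s)}\gamma(s)$. On each smooth piece of $\gamma$, apply Lemma~\ref{gauss}(i) with $u=r=\gamma(s)$, $w=\dot\gamma(s)$; the premise $d(\exp_p)_{\gamma(s)}\dot\gamma(s)=\dot\alpha(s)\in S_{\alpha(s)}$ is automatic because $\alpha$ is horizontal. This gives
\begin{equation*}
\tfrac12 q'(s)=\langle g_p\gamma(s),\dot\gamma(s)\rangle=Q_{\alpha(s)}\bigl(\dot\alpha(s),\dot x_s(1)\bigr).
\end{equation*}
Conservation of the Hamiltonian along $x_s$ yields $Q_{\alpha(s)}(\dot x_s(1),\dot x_s(1))=q(s)$, so whenever $q(s)<0$ both $\dot\alpha(s)$ and $\dot x_s(1)$ are timelike in $S_{\alpha(s)}$. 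As $s\to 0^+$ the first-order approximation $\dot x_s(1)\approx s\,\dot\alpha(0)$ shows $\dot x_s(1)$ and $\dot\alpha(s)\to\dot\alpha(0)$ lie in the same timecone; while $q<0$, neither vector can cross the nullcone, so continuity of $Q_{\alpha(s)}(\cdot,\mathcal{T}(\alpha(s)))$ keeps them in that same timecone. At breaks of $\gamma$, continuity of $\gamma$ preserves $\dot x_s(1)$, and the paper's definition of piecewise-smooth timelike forbids $\dot\alpha$ from switching timecones. The reversed Lorentz Cauchy--Schwarz for codirected timelike vectors then forces $Q_{\alpha(s)}(\dot\alpha(s),\dot x_s(1))<0$, i.e.\ $q'(s)<0$ on every smooth subinterval where $q<0$.

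\emph{Conclusion.} If $q$ returned to zero at a smallest $s_0\in(0,b]$, the strict decrease of $q$ on $(0,s_0)$ would contradict $q(0)=q(s_0)=0$ with $q<0$ in the interior; hence $q<0$ throughout $(0,b]$, and $\gamma$ is nowhere null or zero. Finally $s\mapsto\langle\mathcal{T}(p),\gamma(s)\rangle$ is continuous and, by the reverse Cauchy--Schwarz applied to the timelike pair $\mathcal{T}(p),\,g_p\gamma(s)\in S_p$, nonvanishing; so its sign is constant and $\gamma((0,b])$ stays in the timewedge fixed in the initial step. The main technical obstacle is the timecone-tracking argument for $\dot x_s(1)$ in the monotonicity step; the rest is the Gauss identity plus one-variable monotonicity.
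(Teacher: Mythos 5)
Your proof is correct and follows essentially the same route as the paper: both use the Gauss lemma to identify $\tfrac{d}{ds}\langle g_p\gamma(s),\gamma(s)\rangle$ with $2Q_{\alpha(s)}\bigl(\dot\alpha(s),d(\exp_p)_{\gamma(s)}\gamma(s)\bigr)$, use conservation of the Hamiltonian to keep the radial image timelike, and conclude that this quantity stays negative so $\gamma$ cannot reach the null-cone or the origin, with the same treatment of breaks. Your write-up is in fact slightly more explicit than the paper's about the initial Taylor step and the bootstrapping that prevents the codirectedness argument from being circular.
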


\begin{proof}
We consider two cases. The first one is related with the smooth curve $\gamma$ and the second case will be general. Thus, we assume that $\gamma(t)$, and hence $\alpha(t)$ are smooth in all the domain of definition. Since $g_p(\dot\gamma(0),\dot\gamma(0))=\langle g_p\gamma(0),\gamma(0)\rangle=Q_p(\dot\alpha(0),\dot\alpha(0))<0$, then the curve $\gamma(t)$ is in the same timecone for $t\in(0,\varepsilon)$, where $\varepsilon$ is sufficiently small. We also conclude that $\dot \gamma(t)$ maintains in the same timecone with $t\in(0,\varepsilon)$ for sufficiently small $\varepsilon>0$. Let us denote by $r_{\gamma(t)}$ the radial tangent vector in $T_{\gamma(t)}(T^*_p)$ corresponding to the timelike covector $\gamma(t)$. The vector $r_{\gamma(t)}$ is timelike and therefore, $\langle g_p \dot\gamma(t), r_{\gamma(t)}\rangle$ is negative for $t\in(0,\varepsilon)$. We calculate \begin{equation}\label{eq:4.13}\frac{d}{dt}\langle g_p\gamma(t),\gamma(t)\rangle=2\langle g_p\dot \gamma(t),\gamma(t)\rangle\end{equation} is negative for $t\in(0,\varepsilon)$. Since $\dot\alpha(t)=d(exp_p)_{\gamma(t)}\dot\gamma$ and it is in $S_{exp_p\gamma(t)}$ the last expression in~\eqref{eq:4.13} is equal to $2Q_{exp_p\gamma(t)}\big(\dot\alpha(t), d(exp_p)_{\gamma(t)}r_{\gamma(t)}\big)$ by Lemma~\ref{gauss}. We conclude that so long as $\gamma$ remains in timecone the radial vector $r_{\gamma}$ and the vector $d(exp_p)_{\gamma(t)}r_{\gamma(t)}$ remains timelike. Thus $Q_{exp_p\gamma(t)}\big(\dot\alpha(t), d(exp_p)_{\gamma(t)}r_{\gamma(t)}\big)$ hence $\langle g_p\dot \gamma(t),\gamma(t)\rangle$ and hence $\frac{d}{dt}\langle g_p\gamma(t),\gamma(t)\rangle$ remains negative. But $\gamma$ can leave the timecone only by reaching null-cone or the origin. In any of these cases $\langle g_p\gamma(t),\gamma(t)\rangle=0$. Thus $\gamma$ must remain in the same timecone.

Now suppose that $\gamma$ and hence $\alpha$ is piecewise smooth. We know from the first part of the proof that on its first smooth segment $\gamma$ stays in the same timecone and therefore in the first break $$\langle g_{\alpha(t_0)}\dot\gamma(t_1^-),r_{\beta(t_1)}\rangle<0.$$ Hence by Lemma~\ref{gauss} $$Q_{exp_p\gamma(t_1)}\big(\dot\alpha(t_1^-), d(exp_p)_{\gamma(t_1)}r_{\gamma(t_1)}\big)<0.$$ The additional condition on $\alpha$ at breaks keeps $\dot\alpha(t_1^+)$ in the same timecone, namely at $r_1=d(exp_p)_{\gamma(t_1)}r_{\gamma(t_1)}$. So, again by Lemma~\ref{gauss} $\langle g_{\alpha(t_1)}\dot\gamma(t_1^+),r_{\beta(t_1)}\rangle<0$ and therefore it follows as above that $\frac{d}{dt}\langle g\gamma,\gamma\rangle$ can not change signs at breaks. Hence the argument  for the smooth case remains valid.
\end{proof}

Minor changes in this proof show that the lemma remains true if the words {\it timelike} and {\it timecone} are replaced by {\it causal} and {\it causal cone}.

\begin{lemma}
Let $U$ be a normal neighborhood of $p$ in a Lorentz manifold. If
there exists a piecewise timelike curve $\alpha$ in $U$ from $p$
to $q$, then the  segment $\sigma$ of the extremal from $p$ to $q$
is the unique longest timelike curve in $U$ from $p$ to $q$.
\end{lemma}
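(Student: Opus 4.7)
I will follow the classical Lorentzian pattern (as in~\cite{Oneill}) adapted to the sub-Lorentzian setting, with the Gauss Lemma~\ref{gauss} and the preceding confinement lemma (that $\exp_p^{-1}(\alpha)$ stays in a single timecone) as the two workhorses. The idea is to pull $\alpha$ back to $T_p^*/S^\perp$, polar-decompose the lifted curve into a radial dilation and an angular piece on the unit timelike hyperboloid, split $\dot\alpha$ by the differential of $\exp_p$, and then exploit the Lorentzian fact that a vector $Q$-orthogonal to a timelike vector is spacelike.

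First I would write the lift as $\tilde\alpha(t)=r(t)\beta(t)$, with $\langle g_p\beta,\beta\rangle\equiv-1$ (so that $\beta$ and $\dot\beta$ are $g_p$-orthogonal) and $r(t)>0$. Set $T:=d(\exp_p)_{\tilde\alpha}\beta$ and $V:=r\,d(\exp_p)_{\tilde\alpha}\dot\beta$, so that $\dot\alpha=\dot r\,T+V$. Since $T$ is the velocity of an extremal it lies in $S_{\alpha}$, hence so does $V=\dot\alpha-\dot r T$, and Lemma~\ref{gauss}(i) applied with radial covector $\beta$ and test covectors $\beta$, $\dot\beta$ gives $Q(T,T)=-1$ and $Q(T,V)=0$. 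In Lorentz signature the orthogonal complement of a timelike vector is spacelike, so $Q(V,V)\geq 0$, and therefore
$$|\dot\alpha|^{2}=-Q(\dot\alpha,\dot\alpha)=\dot r^{\,2}-Q(V,V)\leq\dot r^{\,2}.$$

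The monotonicity $\dot r>0$ along the timelike lift was essentially already observed inside the proof of the preceding lemma, where $\tfrac{d}{dt}\langle g_p\tilde\alpha,\tilde\alpha\rangle<0$ throughout a timecone. Integration then yields
$$L(\alpha)=\int_a^b|\dot\alpha|\,dt\leq\int_a^b\dot r\,dt=r(b)=L(\sigma),$$
$\sigma$ being the radial extremal ending at $\tilde\alpha(b)$. Equality forces $Q(V,V)\equiv 0$ and hence $V\equiv 0$; by injectivity of $d\exp_p$ on the normal neighborhood this gives $\dot\beta\equiv 0$, so $\beta$ is constant and $\alpha$ is a reparametrization of $\sigma$. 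At a corner of $\alpha$ the standing hypothesis that timelike curves do not switch timecones across breaks keeps $\dot r>0$ past the jump, so the estimate survives.

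\textbf{Main obstacle.} The principal departure from the Riemannian and semi-Riemannian prototypes is that $\exp_p$ collapses the annihilator $S^\perp$, so the polar-coordinate construction, the $\dot r>0$ monotonicity, and the injectivity of $d\exp_p$ must be carried out on the quotient $T_p^*/S^\perp$ rather than on $T_p^*$. Once this is set up and one verifies that the Lorentz orthogonality $T\perp V$ really forces $V$ to be spacelike \emph{inside the horizontal fibre} $S_{\alpha(t)}$, what remains is bookkeeping around the corners of $\alpha$ and the reparametrization identifying $\alpha$ with $\sigma$ in the equality case.
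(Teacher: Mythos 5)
Your proposal is correct and follows essentially the same route as the paper: polar-decompose the lift $w(t)=r(t)u(t)$ in $T_p^*$, use the timecone-confinement lemma and the Gauss lemma to show the angular part of $\dot\alpha$ is spacelike and $Q$-orthogonal to the radial extremal velocity, deduce $|\dot\alpha|\le|\dot r|$, integrate, and characterize equality. The only (harmless) divergence is in the equality case, where you invoke injectivity of $d\exp_p$ on the normal neighborhood while the paper identifies $\alpha$ with the radial extremal via uniqueness for the ODE they both satisfy.
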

\begin{proof} We understand the uniqueness as the uniqueness up to a monotone reparametrization and can suppose that $\alpha$ is parameterized by arc length.
If $\alpha\colon [0,b]\to U$ is a timelike curve in $U$ from
$p=\alpha(0)$ to $q=\alpha(b)$, then from the lemma above the
lifting $w(t)\colon [0,b]\to T_p^*$, $\alpha(t)=\exp_p\circ w(t)$
remains in a single timecone in $T^*_p$. The section $w(t)$ is the
timelike vector for any $t\in[0,1]$ and therefore define a unite
timelike section $u(t)=|\langle g_pw(t),w(t) \rangle
|^{-1/2}w(t)$. Since $\langle g_p u(t),\dot u(t)\rangle=0$ the
vector field $\dot u(t)$ is spacelike. Let us write $r(t)=|\langle
g_pw(t),w(t)\rangle|^{1/2}$, then $w(t)=r(t)u(t)$. We calculate
$$\dot\alpha(t)=\frac{d}{dt}exp_pw(t)=d(exp_p)_{w(t)}\dot
r(t)u(t)+d(exp_p)_{w(t)}r(t)\dot u(t).$$ Let us denote by $\xi(t)$
the cotangent lift of the extremal $\gamma:s\mapsto \exp_p(su(t))$
at $s=r(t)$. Since $u(t)$ is timelike the same does $\xi(t)$.
Since $\dot \alpha$ is horizontal, then $d(exp_p)_{w(t)}r(t)\dot
u(t)\in S_{\alpha(t)}$. It means that there is spacelike section
$\eta(t)$ such that $d(exp_p)_{w(t)}r(t)\dot
u(t)=g_{\alpha(t)}\eta(t)$ almost everywhere and orthogonal to the
section $\xi$ by Lemma~\ref{gauss}. Then the length of timelike
vector $\dot\alpha$ is given by
\begin{eqnarray*}\Big(-Q_{\alpha(t)}(\dot\alpha,\dot\alpha)\Big)^{1/2} & = & \Big(-\langle \dot r(t)\xi(t)+\eta,\dot r(t)g_{\alpha(t)}\xi(t)+g_{\alpha(t)}\eta(t)\rangle \Big)^{1/2}\\
& = & \Big(|\dot r(t)|^2-\langle g_{\alpha(t)}\eta(t),\eta(t)
\rangle\Big)^{1/2}\leq |\dot r(t)|.
\end{eqnarray*}
Therefore
$$L(\alpha)=\int\limits_0^b|Q_{\alpha(t)}(\dot\alpha,\dot\alpha)|^{1/2}\,dt\leq\int\limits_0^b|\dot r(t)|\,dt=|r(b)|=L(\sigma).$$

The equality holds if and only if $\dot r(t)$ is monotone and $\langle g_{\alpha(t)}\eta(t),\eta(t)
\rangle=0$. In this case the velocity of $\alpha$ satisfies the equation $\dot\alpha(t)=\dot r(t)d(exp_p)_{w(t)}u(t)=\dot r(t)g_{\alpha(t)}\xi(\alpha(t))$. From the other hand the extremal $\gamma(s)=\exp_p(r(t)u(t_0))$, $s=r(t)$, satisfies the equation $\dot\gamma(s)=d(exp_p)_{r(t)u(t_0)}u(t_0)=\dot sg_{\gamma(s)}\xi(\gamma(s))$. Since $\alpha(t)$ and $\gamma(s)$ satisfy the same equation and have the same initial point, we conclude that $\alpha$ is a reparameterization of the extremal $\gamma$.
\end{proof}

We have noticed that a general piecewise smooth horizontal curve does not have a unique cotangent lift. If the curve is an extremal then there is a special cotangent lift, the one that satisfies the Hamilton-Jacobi equation. In the case of the two step bracket generating distributions it is possible to find a canonical cotangent lift. The condition for this is formulated in the following lemma.

\begin{lemma} Assume the strong bracket generating hypothesis.
    Let $x(t)$ be any Lipshitz horizontal curve. Then there exists such a cotangent lift $(x(t),\xi(t))$ that
    a cotangent vector $\omega_j=\dot{\xi}_j+\dfrac{1}{2}\dfrac{\partial g^{pq}}{\partial x^j}\xi_p\xi_q$
    is orthogonal to $\Gamma^j(\xi,v(x))$ for any $v\in S^{\bot}_x\; at\; a.\,e.\; t$ so that pairing $\langle \omega_j,\,\Gamma^j(\xi,v(x))\rangle=0$. This cotangent lift does not depend on the coordinate system and is called a canonical cotangent lift.
\end{lemma}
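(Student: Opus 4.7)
The plan is to parametrize the cotangent lifts of $x(t)$ by sections of $S^{\bot}$, reduce the orthogonality to a pointwise linear algebraic system, and then invert that system using the strong bracket generating hypothesis. First, I would fix any smooth cotangent lift $\xi_0(t)$ (which exists and is determined modulo $S^{\bot}$ because $g\colon T^*/S^{\bot}\to S$ is a bijection) and write every lift as $\xi=\xi_0+v$ with $v(t)\in S^{\bot}_{x(t)}$. A direct expansion, using symmetry of $\partial_jg^{pq}$ in $(p,q)$ together with Lemma~\ref{lemma:2.1}(c) to kill the $v_pv_q$ term, gives
\begin{equation*}
\omega_j(\xi_0+v)=\omega_j(\xi_0)+\dot v_j+\frac{\partial g^{pq}}{\partial x^j}\xi_{0,p}v_q,
\end{equation*}
and contracting with $g^{rj}$, Lemma~\ref{lemma:2.1}(b) applied to $g^{rj}\dot v_j$ together with $\dot x^p=g^{ps}\xi_{0,s}$ collapses the right-hand side (after comparison with \eqref{Gamma}, whose third term vanishes by $gv=0$) to the key identity
\begin{equation*}
g^{rj}\omega_j(\xi_0+v)=g^{rj}\omega_j(\xi_0)+2\Gamma^r(\xi_0,v).
\end{equation*}

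Second, I would reformulate the orthogonality condition. By Lemma~\ref{lemma:2.3}, $\Gamma(\xi_0+v,w)=\Gamma(\xi_0,w)$ for $v,w\in S^{\bot}$, and the defining relation~\eqref{eq:2.1} applied with $X=\Gamma(\xi_0,w)\in S$ gives
\begin{equation*}
\bigl(\omega(\xi_0+v)-\omega(\xi_0)\bigr)_k\Gamma^k(\xi_0,w)=Q\bigl(\Gamma(\xi_0,w),g(\omega(\xi_0+v)-\omega(\xi_0))\bigr)=2Q\bigl(\Gamma(\xi_0,v),\Gamma(\xi_0,w)\bigr).
\end{equation*}
Hence the desired condition $\omega_k(\xi_0+v)\Gamma^k(\xi_0,w)=0$ for every $w\in S^{\bot}$ becomes the symmetric algebraic system
\begin{equation*}
2Q\bigl(\Gamma(\xi_0,v),\Gamma(\xi_0,w)\bigr)=-\omega_k(\xi_0)\Gamma^k(\xi_0,w),\qquad w\in S^{\bot}.
\end{equation*}
In a local frame $\{e_\alpha\}_{\alpha=1}^{n-m}$ of $S^{\bot}$ with $v=v^\alpha e_\alpha$ and $w=e_\beta$, this is a square $(n-m)\times(n-m)$ system with Gram matrix $M_{\beta\alpha}=2Q(\Gamma(\xi_0,e_\alpha),\Gamma(\xi_0,e_\beta))$. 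Crucially, $\dot v$ drops out because $\Gamma(\xi_0,w)\in S$ is annihilated by $e_\alpha\in S^{\bot}$, so no ODE has to be integrated and $v(t)$ is determined algebraically at almost every $t$ at which $\dot x(t)$ exists.

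Finally, the strong bracket generating hypothesis enters through Theorem~\ref{th:2.4}, which gives injectivity of $\Gamma(\xi_0,\cdot)\colon S^{\bot}\to S$, so that $V:=\Gamma(\xi_0,S^{\bot})$ has full dimension $n-m$ inside $S$; together with non-degeneracy of $Q|_V$, the Gram matrix $M$ is invertible and one solves for $v^\alpha(t)$ pointwise. Coordinate-independence of the resulting lift $\xi=\xi_0+v$ then follows from tensoriality of $\Gamma$ (Lemma~\ref{lemma:2.3}) and the covariant transformation of $\omega$ modulo $S^{\bot}$ inherited from the Hamilton structure. I expect the main obstacle to be precisely this invertibility of $M$: injectivity of $\Gamma(\xi_0,\cdot)$ is immediate, but non-degeneracy of $Q$ on the image $V$ is not automatic in the genuine indefinite ss-setting, unlike in the positive-definite sub-Riemannian case, and is the delicate point where the specific structure of the ss-metric must be exploited.
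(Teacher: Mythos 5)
Your approach is essentially the one the paper takes: fix an arbitrary lift $\eta$, write $\xi=\eta+\sum_k a_k v^{(k)}$ over a frame $v^{(1)},\dots,v^{(n-m)}$ of $S^{\bot}$, kill the term quadratic in $v$ with Lemma~\ref{lemma:2.1}(c), recognize via \eqref{eq:2.31} that $g$ applied to the correction of $\omega$ is $2\Gamma(\eta,v)$, and reduce the orthogonality condition to a linear system for the coefficients. Your write-up is in fact cleaner at the two places where the paper is murky: you observe explicitly that the $\dot a_k$-terms are annihilated in the pairing with $\Gamma(\eta,v^{(l)})\in S$, so the system is purely algebraic and pointwise in $t$ (the paper's final display still carries $\dot a_k$ and then ``inverts'' the non-square array $\Gamma^k(\eta,w)$), and you identify the coefficient matrix as the Gram matrix $2Q\bigl(\Gamma(\eta,v^{(k)}),\Gamma(\eta,v^{(l)})\bigr)$. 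The obstacle you flag at the end is genuine and is precisely the step the paper does not really prove: unique solvability is deduced there solely from the injectivity of $\Gamma(\eta,\cdot)$ (Theorem~\ref{th:2.4}), which guarantees that $V=\Gamma(\eta,S^{\bot})$ is $(n-m)$-dimensional inside $S$ but, for an indefinite $Q$, does not exclude that $Q|_V$ is degenerate and hence that the Gram matrix is singular. In the positive-definite sub-Riemannian setting of Strichartz that last step is automatic; in the ss-setting it requires either an additional hypothesis or an argument, and the paper supplies neither. So you have not missed anything the paper provides: you have reproduced its proof and correctly isolated the one assertion in it that is asserted rather than justified.
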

\begin{proof}
    Let $(x(t),\eta(t))$ be any cotangent lift and let $v^{(1)},\ldots, v^{(n-m)}$ be a basis of sections of null-space $S^{\bot}$ over a neighborhood of the curve. Since $\xi$ belongs in general to $T^*_x/S^{\bot}_x\times S^{\bot}_x$ then we can write
   \begin{gather*}
       \xi(t)=\eta(t)+v(t)=\eta(t)+\sum\limits_{k=1}^{n-m}a_k(t)v^{(k)}(t).
   \end{gather*}
Then \begin{gather*}
         \dot{\xi}_j+\dfrac{1}{2}\dfrac{\partial g^{pq}(x)}{\partial x^j}\xi_p\xi_q=\dot{\eta}_j+\dot{v}_j+\dfrac{1}{2}\dfrac{\partial g^{pq}}
         {\partial x^j}(\eta_p+v_p)(\eta_q+v_q)\\
         =\dot{\eta}_j+\dfrac{1}{2}\dfrac{\partial g^{pq}}{\partial x^j}\eta_p\eta_q+
         \biggl(\dot{v}_j+\dfrac{1}{2}\dfrac{\partial g^{pq}}{\partial x^j}\eta_pv_q\biggr).
     \end{gather*}
Recall that for $w\in S^{\bot}_x$
\begin{gather*}
    \Gamma^k(\xi,w)=\dfrac{1}{2}\biggl(g^{jp}g^{kq}\xi_p\dfrac{\partial w_q}{\partial x_j}+g^{jk}\dfrac{\partial g^{pq}}{\partial x^j}w_q\xi_p\biggr)
    =\dfrac{1}{2}g^{jk}\bigl(\dot{w}_j+\dfrac{\partial g^{pq}}{\partial x^j}\xi_pw_q\bigr).
\end{gather*}
Here we used that $g^{pq}\xi_p\dfrac{\partial w_j}{\partial x_k}=\dot{x}_q\dfrac{\partial w_j}{\partial x_q}=\dot{w}_j$. Notice that $\Gamma^k(\xi,w)=\Gamma^k(\eta,w)$.
We had already shown that $\Gamma^j(\xi,w)$ transforms as a tangent vector. Show now that \\$\dot{\xi}_j+\dfrac{1}{2}\dfrac{\partial g^{pq}(x)}{\partial x^j}\xi_p\xi_q$ transforms as a cotangent vector. Consider the transformation laws
$$\xi_k=\dfrac{\partial y^j}{\partial x^k}\widetilde{\xi}_j\quad \mbox{and}\quad \widetilde{g^{pq}}=g^{rs}\dfrac{\partial y^p}{\partial x^r}
 \dfrac{\partial y^q}{\partial x^s}.$$ Then
\begin{gather*}
    \dot{\widetilde{\xi}}_j+\dfrac{1}{2}\dfrac{\partial \widetilde{g}^{pq}}{\partial y^j}\widetilde{\xi}_p\widetilde{\xi}_q=
    \widetilde{g}^{pq}\widetilde{\xi}_p\dfrac{\partial \widetilde{\xi}_j}{\partial y^q}
    +\dfrac{1}{2}\dfrac{\partial \widetilde{g}^{pq}}{\partial y^j}\widetilde{\xi}_p\widetilde{\xi}_q\\
    =g^{rs}\dfrac{\partial y^p}{\partial x^r}\dfrac{\partial y^q}{\partial x^s}\dfrac{\partial x^l}{\partial y^p}\xi_l
    \dfrac{\partial }{\partial x^m}\biggl(\dfrac{\partial x^k}{\partial y^j}\xi_k\biggr)\dfrac{\partial x^m}{\partial y^q}
    +\dfrac{1}{2}\dfrac{\partial }{\partial x^l}\biggl(g^{rs}\dfrac{\partial y^p}{\partial x^r}\dfrac{\partial y^q}{\partial x^s}\biggr)
    \dfrac{\partial x^l}{\partial y^j}\dfrac{\partial x^k}{\partial y^p}\xi_k\dfrac{\partial x^n}{\partial y^q}\xi_n\\
    =g^{rs}\dfrac{\partial y^p}{\partial x^r}\dfrac{\partial y^q}{\partial x^s}\dfrac{\partial x^l}{\partial y^p}
    \dfrac{\partial^2 x^k}{\partial x^m\partial y^j}\dfrac{\partial x^m}{\partial y^q}\xi_l\xi_k
    +g^{rs}\dfrac{\partial y^p}{\partial x^r}\dfrac{\partial y^q}{\partial x^s}\dfrac{\partial x^l}{\partial y^p}
    \dfrac{\partial x^k}{\partial y^j}\dfrac{\partial \xi_k}{\partial x^m}\dfrac{\partial x^m}{\partial y^q}\xi_l\\
    +\dfrac{1}{2}\dfrac{\partial g^{rs}}{\partial x^l}\dfrac{\partial y^p}{\partial x^r} \dfrac{\partial y^q}{\partial x^s}
    \dfrac{\partial x^l}{\partial y^j}\dfrac{\partial x^k}{\partial y^p}\dfrac{\partial x^n}{\partial y^q}\xi_k\xi_n
    +\dfrac{1}{2}g^{rs}\biggl(\dfrac{\partial^2y^p}{\partial x^l\partial x^r}\dfrac{\partial y^q}{\partial x^s}
    +\dfrac{\partial y^p}{\partial x^r}\dfrac{\partial^2y^q}{\partial x^l\partial x^s}\biggr)
    \dfrac{\partial x^l}{\partial y^j}\dfrac{\partial x^k}{\partial y^p}\dfrac{\partial x^n}{\partial y^q}\xi_k\xi_n.
\end{gather*}
The second and the third terms here are equal to $g^{rs}\dfrac{\partial x^k}{\partial y^j}\dfrac{\partial \xi_k}{\partial x^s}\xi_r$ and $\dfrac{1}{2}
\dfrac{\partial g^{rs}}{\partial x^k}\dfrac{\partial x^k}{\partial y^j}\xi_r\xi_s$ respectively, which gives in the whole the transformation of a covariant vector:
\begin{gather*}
   \dfrac{\partial x^k}{\partial y^j}\biggl(g^{rs}\xi_r\dfrac{\partial \xi_k}{\partial x^s}+\dfrac{1}{2}\dfrac{\partial g^{rs}}{\partial  x^k}\xi_r\xi_s\biggr)=\dfrac{\partial x^k}{\partial y^j}\biggl(\dot{\xi}_k+\dfrac{1}{2}\dfrac{\partial g^{rs}}{\partial x^k}\xi_r\xi_s\biggr).
\end{gather*}
The rest of the terms give in sum
\begin{gather*}
    g^{rs}\dfrac{\partial^2x^k}{\partial x^s\partial y^j}\xi_r\xi_k+\dfrac{1}{2}g^{rs}\dfrac{\partial^2y^p}{\partial x^l\partial x^s}
    \dfrac{\partial x^l}{\partial y^j}\dfrac{\partial x^k}{\partial y^p}\xi_k\xi_r+\dfrac{1}{2}g^{rs}\dfrac{\partial^2y^p}{\partial x^l\partial x^s}
    \dfrac{\partial x^l}{\partial y^j}\dfrac{\partial x^k}{\partial y^p}\xi_r\xi_k\\
    =g^{rs}\dfrac{\partial^2x^k}{\partial x^s\partial y^j}\xi_r\xi_k+g^{rs}\dfrac{\partial^2y^p}{\partial x^l\partial x^s}
    \dfrac{\partial x^l}{\partial y^j}\dfrac{\partial x^k}{\partial y^p}\xi_r\xi_k=g^{rs}\xi_r\xi_k\biggl(\dfrac{\partial^2x^k}{\partial x^s\partial y^j}+\dfrac{\partial^2y^p}{\partial x^l\partial x^s}\dfrac{\partial x^l}{\partial y^j}\dfrac{\partial x^k}{\partial y^p}\biggr)=0
\end{gather*}
since $\dfrac{\partial }{\partial y^j}\delta^k_s=0$, where $\delta^k_s$ is a Kronecker symbol.

Now we see that the orthogonality condition is of the form
\begin{gather*}
   \biggl[\biggl(\dot{\eta}_j+\dfrac{1}{2}\dfrac{\partial g^{pq}}{\partial x^j}\eta_p\eta_q\biggr)+
   \biggl(\dot{v}_j+\dfrac{1}{2}\dfrac{\partial g^{pq}}{\partial x^j}\eta_pv_q\biggr)\biggr]\cdot\Gamma^k(\eta,w)=0.
\end{gather*}
As $\Gamma(\xi,\cdot)$ is injective the converse matrix $(\Gamma^k(\eta,w))^{-1}$ exists. Therefore the linear system of $n-m$ equations in $n-m$ variables $a_k(t)$
$$\sum\limits_{k=1}^{n-m}(\dot{a}_{k} v^{(k)}_j+a_{k}\dot{v}^{(k)}_j)+\dfrac{\partial g^{pq}}{\partial x^j}\eta_p\sum\limits_{k=1}^{n-m}a_{k}v^{(k)}_q
=(\Gamma^k(\eta,w))^{-1}(\dot{\eta}_j+\dfrac{1}{2}\dfrac{\partial g^{pq}}{\partial x^j}\eta_p\eta_q)$$
is uniquely solvable.
\end{proof}

\section{Differential of the exponential map}\label{sec:5}

As it was mentioned, the exponential mapping $\exp_p$ is not a
diffeomorphism at the origin, but as in the case of sub-Riemannian
geometry there is a hope that it is a local diffeomorphism at some
points. The main result can be stated that the exponential map
$\exp_p(u)$ is a local diffeomorphism if $u$ is neither a null
vector no an annihilator. We consider only the case of 2-step
bracket generating distribution. First, let us set out the Taylor
expansion for $k$-th component of $\exp_p(u)$, where $p$ is fixed
at the origin of the coordinates and $u\in T^*_pM$:
\begin{equation}\label{eq:5.1}
    \exp_p(u)^k=\sum\limits_{r=1}^N\dfrac{1}{r!}\gamma_{(r)}^{kp_1\ldots p_r}u_{p_1}\ldots u_{p_r}+O(|u|^{N+1}),
\end{equation}
where $\gamma_{(r)}^{kp_1\ldots p_r}$ is symmetric in indexes
$p_1,\ldots p_r$ and will be computed later, $|u|$ is any
Euclidean norm on $T^*_pM$. Notice that $\exp_p(tu)=x(t)$, where
$(x(t),\xi(t))$ --- solution of the system \eqref{eq:ham} with
$x(0)=0$, $\xi(0)=u$. Then at the origin
$$\gamma_{(r)}^{kp_1\ldots p_r}u_{p_1}\ldots u_{p_r}=\left(\dfrac{d}{dt}\right)^rx^k(0).$$
We count for some value of $t$
\begin{gather*}
    \left(\dfrac{d}{dt}\right)^{r+1}\!\!x^k(t)=\dfrac{d}{dt}\left(\gamma_{(r)}^{kp_1\ldots p_r}(x(t))\xi_{p_1}(t)\ldots\xi_{p_r}(t)\right)\notag\\
    =\dfrac{\partial\gamma_{(r)}^{kp_1\ldots p_r}}{\partial x^q}(x(t))\cdot\dot{x}^q(t)\cdot\xi_{p_1}(t)\ldots \xi_{p_r}(t)+r\cdot\gamma_{(r)}^{kp_1\ldots p_r}
    \cdot\dfrac{\partial \xi_{p_i}}{\partial t}(t)\cdot\xi_{p_1}(t)\ldots \widehat{\xi}_{p_i}(t)\ldots \xi_{p_r}(t),
\end{gather*} where $\widehat{\xi}_{p_i}(t)$ denotes the absence of $\xi_{p_i}(t)$.
Now, using \eqref{eq:ham} and changing indexes, we get
\begin{eqnarray*}\left(\dfrac{d}{dt}\right)^{r+1}\!\!x^k(t)
& = & \Big( \dfrac{\partial\gamma_{(r)}^{kp_1\ldots p_r}}{\partial x^q}(x(t))\cdot g^{qp_{r+1}}(x(t))
\\
& - & \dfrac{r}{2}\cdot\gamma_{(r)}^{kp_1\ldots p_{r-1}q}
    \cdot\dfrac{\partial g^{p_rp_{r+1}}}{\partial x^q}(x(t)) \Big) \cdot\xi_{p_1}(t)\ldots \xi_{p_{r+1}}(t).
\end{eqnarray*}
Therefore,
\begin{equation}\label{eq:5.2}
    \gamma_{r+1}^{kp_1\ldots p_{r+1}}(x)=\sym(p_1,\ldots,p_{r+1})\cdot\biggl(g^{qp_{r+1}}(x)\dfrac{\partial\gamma_{(r)}^{kp_1\ldots p_r}}{\partial x^q}(x) -\dfrac{r}{2}\gamma_{(r)}^{kp_1\ldots p_{r-1}q}(x)
    \dfrac{\partial g^{p_rp_{r+1}}}{\partial x^q}(x)\biggr),
\end{equation}
here $sym(p_1,\ldots,p_{r+1})$ means that we symmetrize the indexes $p_1,\ldots,p_{r+1}$.
Setting $r=0$ in the previous formula we get
\begin{equation}\label{eq:5.3}
    \gamma_{(1)}^{kp}=g^{qp}\dfrac{\partial\gamma^k_{(0)}}{\partial x^q}=g^{kp},
\end{equation}
since $\gamma^k_{(0)}=x^k(0)$ and $\dfrac{\partial x^k}{\partial x^q}$ equals to 1 if and only if $k=q$ and zero otherwise.

Analogously, observe that for $r=1$ in \eqref{eq:5.2}
\begin{eqnarray}\label{eq:5.4}
\gamma_{(2)}^{kp_1p_2} & = & \sym(p_1,p_2)\cdot\biggl(g^{qp_2}\dfrac{\partial\gamma_{(1)}^{kp_1}}{\partial x^q}-\dfrac{1}{2}\gamma_{(1)}^{kq}
    \dfrac{\partial g^{p_1p_2}}{\partial x^q}\biggr)\\
& = & \sym(p_1,p_2)\cdot\biggl(g^{qp_2}\dfrac{\partial g^{kp_1}}{\partial x^q}-\dfrac{1}{2}g^{kq}\dfrac{\partial g^{p_1p_2}}{\partial x^q}\biggr)=-\Gamma^{kp_1p_2}.\nonumber
\end{eqnarray}

It is rather hard to calculate a general term, but it will be
sufficient for us to look into the view of $\gamma_{(3)}$.

Now \eqref{eq:5.1} receives the following form
\begin{gather*}
    \exp_p(u)^k=\gamma_{(1)}^{kp_1}u_{p_1}+\sum\limits_{r=2}^N\dfrac{1}{r!}\gamma_{(r)}^{kp_1\ldots p_r}u_{p_1}\ldots u_{p_r}+O(|u|^N)
\end{gather*}
and differentiating it, we obtain
\begin{equation}\label{eq:5.5}
    d\exp_p(u)^k=\gamma_{(1)}^{kp_1}+\sum\limits_{r=2}^N\dfrac{1}{(r-1)!}\gamma_{(r)}^{kp_1\ldots p_r}u_{p_2}\ldots u_{p_r}+O(|u|^N)
\end{equation}
\begin{gather*}
    =g^{kj}(0)+\sum\limits_{r=2}^N\dfrac{1}{(r-1)!}\,\gamma_{(r)}^{kjp_2\ldots p_r}\,u_{p_2}\ldots u_{p_r}+O(|u|^N)\notag
\end{gather*}
More precisely,
\begin{gather*}
 d\exp_p(u)^{kj}=g^{kj}(0)+\gamma_{(2)}^{kjp_2}u_{p_2}+\dfrac{1}{2}\gamma_{(3)}^{kjp_2p_3}u_{p_2}u_{p_3}+O(|u|^3)\\
 =g^{kj}(0)-\Gamma^{kjp}u_p+\dfrac{1}{2}\gamma_{(3)}^{kjpq}u_pu_q+O(|u|^3).
\end{gather*}

Since we assumed 2-step bracket generating hypothesis, choose
coordinates near $p$ so that $p$ is an origin and
\begin{gather*}
    g^{jk}(0)=
\left(\begin{matrix}
\varepsilon_jI^{jk} & 0 \\
0 & 0
\end{matrix}\right),
\end{gather*}
where $I^{jk}$ is a $m\times m$ unit matrix and $\varepsilon_jI^{jk}$ is a $m\times m$ matrix with $\nu$ negative unities on the diagonal and $m-\nu$ positive unities, which can be also written as follows:
$g^{jk}(0)=\varepsilon_j\delta^j_{k}$ , where $\delta^j_{k}$ is a Kronecker symbol and
$$\varepsilon_j=\begin{cases}-1, \;\;\mbox{if}\;\; 1\leqslant j\leqslant\nu,\\ 1,\;\;\mbox{if}\;\; \nu<j\leqslant m,\\
 0,\;\;\mbox{if}\;\; m<j\leqslant n.\end{cases}$$

Denote with $a,\,b$ the indexes responsible for elements standing
in rows or columns with numbers $1,\ldots,m$, and $\alpha,\,\beta$
--- for $m+1,\ldots,n$ respectively. Then $d\exp_p(u)$ is a
$n\times n$ matrix of the following form
\begin{equation*}
    W^{kj}=\left(\begin{matrix}
A^{ab} & B^{a\beta} \\
C^{\alpha b} & D^{\alpha\beta}
\end{matrix}\right)
\end{equation*}
with
\begin{eqnarray}\label{eq:ABCD}
   &A^{ab}=\varepsilon_aI^{ab}+O(|u|),\nonumber\\
   &B^{a\beta}=-\Gamma^{a\beta p}u_p+O(|u|^2),\\
   &C^{\alpha b}=-\Gamma^{\alpha b p}u_p+O(|u|^2),\nonumber\\
   &D^{\alpha\beta}=\dfrac{1}{2}\gamma_{(3)}^{\alpha\beta pq}u_pu_q+O(|u|^3).\nonumber
\end{eqnarray}
Since $\gamma^{\alpha\beta p}_{(2)}=0$ due to the special choice of $g^{kj}$, there are no terms of order 2 in $D^{\alpha\beta}$. The following proposition is an easy computation on determinant.
\begin{lemma}\label{lem:5.1}
$\det W(u)=\det \widetilde{W}(u)+O(|u|^{2(n-m)+1})$, where $\widetilde{WW}(u)$ is obtained from $W$ by discarding the error terms containing $O(|u|^i)$, $i=1,\,2,\,3$ and $\det \widetilde{W}(u)$ is homogeneous of degree $2(n-m)$ in $u$.
\end{lemma}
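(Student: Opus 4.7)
The plan is to reduce the computation to a Schur complement of the block $A$ in $W$ and then to a degree count. Since $A^{ab} = \varepsilon_a\delta^{ab} + O(|u|)$ with $\varepsilon_a = \pm 1$, the block $A$ is invertible for $|u|$ sufficiently small, so
$$\det W(u) = \det A \cdot \det\bigl(D - C A^{-1} B\bigr).$$
The inverse satisfies $A^{-1} = \diag(\varepsilon_a) + O(|u|)$, so each entry of $C A^{-1} B$ is a homogeneous quadratic form in $u$ modulo $O(|u|^3)$; explicitly,
$$(C A^{-1} B)^{\alpha\beta} = \sum_b \varepsilon_b\,\Gamma^{\alpha b p}\Gamma^{b\beta q}\,u_p u_q + O(|u|^3).$$
Combined with the leading quadratic piece of $D$, the $(n-m)\times(n-m)$ matrix $D - C A^{-1} B$ has the form $Q(u) + O(|u|^3)$, where each entry of $Q(u)$ is homogeneous quadratic in $u$.

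Next, I would expand $\det\bigl(Q(u) + O(|u|^3)\bigr)$ by multilinearity of the determinant in its rows. The leading contribution is $\det Q(u)$, homogeneous of degree $2(n-m)$ in $u$. Every other contribution replaces at least one row of $Q(u)$ by an $O(|u|^3)$ row, and is therefore $O(|u|^{2(n-m-1)+3}) = O(|u|^{2(n-m)+1})$. Multiplying through by $\det A = \varepsilon_1\cdots\varepsilon_m + O(|u|)$, the cross term $O(|u|) \cdot O(|u|^{2(n-m)})$ is absorbed in the error, producing
$$\det W(u) = \varepsilon_1\cdots\varepsilon_m\,\det Q(u) + O(|u|^{2(n-m)+1}).$$

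Finally, I identify the leading term with $\det\widetilde W(u)$. Applying the same Schur reduction to $\widetilde W$, whose blocks are precisely the retained leading summands $\varepsilon_a\delta^{ab}$, $-\Gamma^{a\beta p}u_p$, $-\Gamma^{\alpha b p}u_p$, $\tfrac{1}{2}\gamma_{(3)}^{\alpha\beta pq}u_p u_q$, gives $\det\widetilde W(u) = \varepsilon_1\cdots\varepsilon_m\,\det Q(u)$, matching the leading term above. The claimed homogeneity of $\det\widetilde W(u)$ also follows directly from the Leibniz formula: any permutation hitting $j$ entries of the $\widetilde A$-block necessarily hits $m-j$ entries of each of $\widetilde B$ and $\widetilde C$ and $n - 2m + j$ entries of $\widetilde D$, yielding a monomial of total degree $2(m-j) + 2(n-2m+j) = 2(n-m)$, independently of $j$.

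I do not anticipate a substantive obstacle; the argument is essentially bookkeeping organized around the key structural observation that the leading orders in the four blocks are $0,1,1,2$, so every Leibniz term of $\widetilde W$ contributes the same total degree $2(n-m)$. The only mild care is in verifying that no lower-degree cross-term survives in the Schur product, which is automatic because in each block the error is exactly one order higher than the retained leading summand.
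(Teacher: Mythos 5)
Your argument is correct, and it is in fact more complete than what the paper offers: the paper dismisses the error estimate as ``an easy computation on determinant'' and only establishes the homogeneity of $\det\widetilde W$ after the lemma, via the explicit evaluation of $\gamma_{(3)}^{\alpha\beta ab}$ at the origin, which produces the identity $\widetilde D^{\alpha\beta}=\frac13\varepsilon_j\widetilde B^{j\beta}\widetilde B^{j\alpha}+\varepsilon_j\widetilde B^{j\beta}\widetilde C^{\alpha j}$ and then a block row operation reducing $|\det\widetilde W|$ to $|\det\frac13\widetilde B^{j\beta}\widetilde B^{j\alpha}|$, whose entries are quadratic in $u$ because $\widetilde B$ represents $-\Gamma(u,\cdot)$ on the $(n-m)$-dimensional $S^{\bot}$. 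You reach both conclusions by two cleaner observations: the Schur factorization $\det W=\det A\cdot\det(D-CA^{-1}B)$ combined with multilinearity in the rows controls the $O(|u|^{2(n-m)+1})$ error (the key point, which you correctly isolate, being that in every block the discarded term is exactly one order higher than the retained one), and the Leibniz degree count --- block degrees $0,1,1,2$ forcing every permutation monomial to have total degree $2(n-m)$ --- gives homogeneity with no knowledge of the explicit form of $\gamma_{(3)}$. What your route does not produce is the structural identity tying $\widetilde D$ to $\widetilde B$ and $\widetilde C$, but that identity is needed only for Lemma~\ref{lem:5.2} (nonvanishing of $\det\widetilde W$ via injectivity of $\Gamma(u,\cdot)$), not for the present statement; for Lemma~\ref{lem:5.1} itself your argument is sufficient and is presumably the ``easy computation'' the authors had in mind.
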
 To estimate the determinant of $\widetilde{W}(u)$ we need some more calculations.
From \eqref{eq:5.2} and \eqref{eq:5.4} we get
\begin{eqnarray*}
\gamma_{(3)}^{\alpha\beta pq}(x) & = & \sym(\beta,p,q)\cdot\biggl(g^{jq}(x)\dfrac{\partial \gamma_{(2)}^{\alpha\beta p}}{\partial x^j}(x)
    -\gamma_{(2)}^{\alpha\beta j}(x)\dfrac{\partial g^{pq}}{\partial x^j}(x)\biggr)\\
& = & \sym(\beta,p,q)\cdot\biggl(-g^{jq}(x)\dfrac{\partial \Gamma^{\alpha\beta p}}{\partial x^j}(x)+\Gamma^{\alpha\beta j}(x)\dfrac{\partial g^{pq}}{\partial x^j}(x)\biggr)\\
& = & \dfrac{1}{3}\biggl(\Gamma^{\alpha\beta j}(x)\dfrac{\partial g^{pq}}{\partial x^j}(x)+\Gamma^{\alpha pj}(x)\dfrac{\partial g^{\beta q}}{
    \partial x^j}(x)+\Gamma^{\alpha qj}\dfrac{\partial g^{p\beta}}{\partial x^j}(x)\biggr.\\
\biggl.& - & g^{jq}(x)\dfrac{\partial \Gamma^{\alpha\beta p}}{\partial x^j}(x)-g^{jp}(x)\dfrac{\partial \Gamma^{\alpha\beta q}}{\partial x^j}(x)-
    g^{j\beta}(x)\dfrac{\partial \Gamma^{\alpha pq}}{\partial x^j}(x)\biggr).
\end{eqnarray*}
Setting here $x=0$, we get that the first and the last terms in the last sum are zero, because $g^{jk}(0)=0$ for $j,k>m$. Hence, for $p,\,q\leqslant m$
\begin{equation}\label{eq:5.8}
    \gamma_{(3)}^{\alpha\beta ab}=\dfrac{1}{3}\biggl(\Gamma^{\alpha aj}\dfrac{\partial g^{\beta b}}{
    \partial x^j}+\Gamma^{\alpha bj}\dfrac{\partial g^{a\beta}}{\partial x^j}
    -\varepsilon_b\dfrac{\partial \Gamma^{\alpha\beta a}}{\partial x^b}-\varepsilon_a\dfrac{\partial \Gamma^{\alpha\beta b}}{\partial x^a}\biggr)
\end{equation}
and for $p,\,q>m$ $\gamma_{(3)}^{\alpha\beta pq}=0$ since $g^{jk}(0)=0$ for $j,k>m$.
Let us calculate the involved terms in \eqref{eq:5.8}.
\begin{gather*}
    \left.\Gamma^{\alpha aj}\right|_{x=0}=\left.\dfrac{1}{2}\left(g^{\alpha k}\dfrac{\partial g^{aj}}{\partial x^k}-g^{a k}\dfrac{\partial g^{\alpha j}}{\partial x^k}
    -g^{j k}\dfrac{\partial g^{\alpha a}}{\partial x^k}\right)\right|_{x=0}=-\dfrac{1}{2}\left(\varepsilon_a \dfrac{\partial g^{\alpha j}}{\partial x^a}+\varepsilon_j\dfrac{\partial g^{\alpha a}}{\partial x^j}\right),
\end{gather*}
\begin{eqnarray*}
\Gamma^{\alpha \beta a}\Big|_{x=0} & = & \dfrac{1}{2}\left(g^{\alpha k}\dfrac{\partial g^{\beta a}}{\partial x^k}-g^{\beta k}
     \dfrac{\partial g^{\alpha a}}{\partial x^k}
    -g^{a k}\dfrac{\partial g^{\alpha \beta}}{\partial x^k}\right)\Big|_{x=0}=-\dfrac{1}{2}\varepsilon_a
    \dfrac{\partial g^{\alpha \beta}}{\partial x^a},\\
\dfrac{\partial \Gamma^{\alpha\beta a}}{\partial x^b}\Big|_{x=0} & = & \dfrac{1}{2}\Big(\dfrac{\partial g^{\alpha k}}{\partial x^b}
    \dfrac{\partial g^{\beta a}}{\partial x^k}-g^{\alpha k}\dfrac{\partial^2 g^{\beta a}}{\partial x^b \partial x^k}-
    \dfrac{\partial g^{\beta k}}{\partial x^b}\dfrac{\partial g^{\alpha a}}{\partial x^k}\\
& + & g^{\beta k}
    \dfrac{\partial^2 g^{\alpha a}}{\partial x^b x^k}-\dfrac{\partial g^{a k}}{\partial x^b}\dfrac{\partial g^{\alpha \beta}}{\partial x^k}
    -g^{ak}\dfrac{\partial^2 g^{\alpha \beta}}{\partial x^bx^k}\Big)\Big|_{x=0}\\
& = & \dfrac{1}{2}\left(\dfrac{\partial g^{\alpha k}}{\partial x^b}
    \dfrac{\partial g^{\beta a}}{\partial x^k}-
    \dfrac{\partial g^{\beta k}}{\partial x^b}\dfrac{\partial g^{\alpha a}}{\partial x^k}
    -\varepsilon_a\dfrac{\partial^2 g^{\alpha \beta}}{\partial x^a\partial x^b}\right)
\end{eqnarray*}
owing to $g^{\alpha k}=g^{\beta k}=0$ and $\dfrac{\partial
g^{\alpha\beta}}{\partial x^k}=0$ by Lemma \ref{lemma:2.1}. Now we
simplify the form of $\dfrac{\partial^2 g^{\alpha \beta}}{\partial
x^a\partial x^b}$. Take a null-section $v(x)$, then
$$g^{jk}\dfrac{\partial v_k}{\partial x^p}=-\dfrac{\partial g^{jk}}{\partial x^p}v_k.$$
Thus, making use of Lemma~\ref{lemma:2.1} and differentiating both
parts, we obtain
\begin{gather*}
     \dfrac{\partial}{\partial x^q}\biggl(g^{jk}\dfrac{\partial v_k}{\partial x^p}\biggr)=\dfrac{\partial g^{jk}}{\partial x^q}\dfrac{\partial v_k}{\partial x^p}+g^{jk}\dfrac{\partial^2v_k}{\partial x^p\partial x^q},\\
     \dfrac{\partial}{\partial x^q}\biggl(\dfrac{\partial g^{jk}}{\partial x^p}v_k\biggr)=\dfrac{\partial^2 g^{jk}}{\partial x^q\partial x^p}v_k
     +\dfrac{\partial g^{jk}}{\partial x^p}\dfrac{\partial v_k}{\partial x^q}.
\end{gather*}
From here
\begin{gather*}
     \dfrac{\partial^2 g^{jk}(x)}{\partial x^q\partial x^p}v_k(x)=-\dfrac{\partial g^{jk}(x)}{\partial x^p}\dfrac{\partial v_k(x)}{\partial x^q}-\dfrac{\partial g^{jk}(x)}{\partial x^q}\dfrac{\partial v_k(x)}{\partial x^p}-g^{jk}(x)\dfrac{\partial^2v_k(x)}{\partial x^p\partial x^q}.
\end{gather*}
Taking inner product with another null-section, we get
\begin{gather*}
     \dfrac{\partial^2 g^{jk}(x)}{\partial x^q\partial x^p}v_k(x)w_j(x)=-\dfrac{\partial g^{jk}(x)}{\partial x^p}\dfrac{\partial v_k(x)}{\partial x^q}w_j(x)-\dfrac{\partial g^{jk}(x)}{\partial x^q}\dfrac{\partial v_k(x)}{\partial x^p}w_j(x)
\end{gather*}
since $g^{jk}(x)\dfrac{\partial^2v_k(x)}{\partial x^p\partial
x^q}w_j(x)=0$ by virtue of Lemma~\ref{lemma:2.1}.

Set $x=0$ and, since $g^{lk}(0)\dfrac{\partial v_k}{\partial x^r}(0)=\varepsilon_l\dfrac{\partial v_l}{\partial x^r}(0)=-\varepsilon_l\dfrac{\partial g^{lk}}{\partial x^r}(0)v_k$, then
\begin{eqnarray*}
\dfrac{\partial^2 g^{jk}}{\partial x^a\partial x^b}v_{k}w_{j} & = & -\dfrac{\partial g^{jk}}{\partial x^a}\dfrac{\partial v_{k}}{\partial x^b}w_{j}-\dfrac{\partial g^{jk}}{\partial x^b}\dfrac{\partial v_{k}}{\partial x^a}w_{j}\\
& = & \varepsilon_k\dfrac{\partial g^{jk}}{\partial x^a}
\dfrac{\partial g^{km}}{\partial x^b}v_mw_{j}+\varepsilon_k
\dfrac{\partial g^{jk}}{\partial x^b}\dfrac{\partial
g^{km}}{\partial x^a}v_mw_{j}.
\end{eqnarray*}
Take $v_k(0)=\delta^k_{\beta}$, $w_j(0)=\delta^j_{\alpha}$ and get
$$\dfrac{\partial^2 g^{\alpha\beta}}{\partial x^a\partial x^b}=\varepsilon_k\dfrac{\partial g^{\alpha k}}{\partial x^a}\dfrac{\partial g^{k\beta}}{\partial x^b}+\varepsilon_k\dfrac{\partial g^{\alpha k}}{\partial x^b} \dfrac{\partial g^{k\beta }}{\partial x^a}. $$
Therefore,
\begin{gather*}
     \dfrac{\partial\Gamma^{\alpha\beta a}}{\partial x^b}=\dfrac{1}{2}\left(\dfrac{\partial g^{\alpha j}}{\partial x^b}\dfrac{\partial g^{\beta a }}{\partial x^j}-\dfrac{\partial g^{\beta j}}{\partial x^b}\dfrac{\partial g^{\alpha a}}{\partial x^j}-\varepsilon_a\varepsilon_j\dfrac{\partial g^{\alpha j}}{\partial x^a}\dfrac{\partial g^{\beta j}}{\partial x^b}-\varepsilon_a\varepsilon_j\dfrac{\partial g^{\alpha j}}{\partial x^b}\dfrac{\partial g^{\beta j}}{\partial x^a}\right).
\end{gather*}
Substituting calculated terms in \eqref{eq:5.8}
\begin{eqnarray*}
\gamma_{(3)}^{\alpha\beta ab} & = & \dfrac{1}{6}\biggl[-\biggl(\varepsilon_a\dfrac{\partial g^{\alpha j}}{\partial x^a}+\varepsilon_j\dfrac{\partial g^{\alpha a}}{\partial x^j}\biggr)\dfrac{\partial g^{\beta b}}{\partial x^j}-\biggl(\varepsilon_b\dfrac{\partial g^{\alpha j}}{\partial x^b}+\varepsilon_j\dfrac{\partial g^{\alpha b}}{\partial x^j}\biggr)\dfrac{\partial g^{\beta a}}{\partial x^j}\\
& - & \varepsilon_b\biggl(\dfrac{\partial g^{\alpha j}}{\partial x^b}\dfrac{\partial g^{\beta a}}{\partial x^j}-\dfrac{\partial g^{\beta j}}{\partial x^b}\dfrac{\partial g^{\alpha a}}{\partial x^j}
    -\varepsilon_a\varepsilon_j\dfrac{\partial g^{\alpha j}}{\partial x^a}\dfrac{\partial g^{\beta j}}{\partial x^b}-\varepsilon_a\varepsilon_j\dfrac{\partial g^{\alpha j}}{\partial x^b}\dfrac{\partial g^{\beta j}}{\partial x^a}\biggr)\\
& - & \varepsilon_a\biggl(\dfrac{\partial g^{\alpha j}}{\partial x^a}\dfrac{\partial g^{\beta b}}{\partial x^j}-\dfrac{\partial g^{\beta j}}{\partial x^a}\dfrac{\partial g^{\alpha b}}{\partial x^j}
    -\varepsilon_b\varepsilon_j\dfrac{\partial g^{\alpha j}}{\partial x^b}\dfrac{\partial g^{\beta j}}{\partial x^a}-\varepsilon_b\varepsilon_j\dfrac{\partial g^{\alpha j}}{\partial x^a}\dfrac{\partial g^{\beta j}}{\partial x^b}\biggr)\biggr].
\end{eqnarray*}
To simplify this let us introduce the following notations
\begin{gather*}
    E^{\alpha\beta}=\varepsilon_b\dfrac{\partial g^{\alpha\beta}}{\partial x^b}u_b,\quad F^{\beta}_a=\dfrac{\partial g^{\beta b}}{\partial x^a}u_b.
\end{gather*}
Then
\begin{eqnarray*}
\gamma_{(3)}^{\alpha\beta ab}u_au_b & = & \dfrac{1}{3}\biggl(2\varepsilon_jE^{\alpha j}E^{\beta j}-2E^{\alpha j}F^{\beta}_{j}-\varepsilon_jF^{\alpha}_{j}F^{\beta}_{j}+E^{\beta j}F^{\alpha}_{j}
\biggr)\\
& = & \dfrac{1}{6}\biggl(\Big(\varepsilon_jF^{\beta}_{j}-E^{\beta j}\Big)\Big(\big(F^{\alpha}_{j}-\varepsilon_jE^{\alpha j}\big)-3\big(\varepsilon_jE^{\alpha j}+F^{\alpha}_{j}\big)\Big)\biggr)\\
& = & \dfrac{2}{3}\varepsilon_j\widetilde{B}^{j\beta}\widetilde{B}^{j\alpha}+2\varepsilon_j\widetilde{B}^{j\beta}\widetilde{C}^{\alpha j}.
\end{eqnarray*}
Thus, we have the form of the matrix $\widetilde{W}^{kj}$
\begin{equation*}
    \widetilde{W}^{kj}=\left(
    \begin{matrix}
        \varepsilon_jI^{jb} & \widetilde{B}^{a\beta} \\
        \widetilde{C}^{\alpha b} & \dfrac{1}{3}\varepsilon_j\widetilde{B}^{j\beta}\widetilde{B}^{j\alpha}+\varepsilon_j\widetilde{B}^{j\beta}\widetilde{C}^{\alpha j}
    \end{matrix}\right),
\end{equation*}
therefore,
\begin{equation*}
    \left(
    \begin{matrix}
        I^{ab} & 0 \\
        -\widetilde{C}^{\alpha b} & \varepsilon_jI^{j\beta}
    \end{matrix}\right)\widetilde{W}^{kj}=\left(
    \begin{matrix}
        \varepsilon_jI^{jb} & \widetilde{B}^{a\beta} \\
        0 & \dfrac{1}{3}\widetilde{B}^{j\beta}\widetilde{B}^{j\alpha}
    \end{matrix}\right),
\end{equation*}
from which we obtain
$$|\det{\widetilde{W}}|=|\det\dfrac{1}{3}\widetilde{B}^{j\beta}\widetilde{B}^{j\alpha}|.$$ From here we have the homogeneity of $\det \widetilde{W}(u)$ of degree $2(n-m)$ in $u$, since the matrix $B$ is represented by the mapping $\Gamma(u,\cdot)\colon S^{\bot}\to S$, where $S^{\bot}$ is $(n-m)$-dimensional.
\begin{lemma}\label{lem:5.2}
    Let us assume 2-step bracket generating hypothesis for the ss-manifold $M$,
    and let $u\in T^*M$. Then for every $u$ with $\langle gu,u\rangle \neq0$ there exists
    $\delta>0$ such that $$|\det{\widetilde{M}}(u)|\geqslant \delta |\langle gu,u\rangle|^{(n-m)}.$$
\end{lemma}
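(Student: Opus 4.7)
The plan is to build on the reduction $|\det\widetilde{W}(u)|=3^{-(n-m)}|\det(B^{T}QB)|$ implicit in the preceding computation, where $B=(B^{a\beta})$ is the $m\times(n-m)$ matrix of the linear map $\Gamma(u,\cdot)\colon S^{\bot}\to S$ at the base point $p$, and $Q=\diag(\varepsilon_{j})$ is the cometric on $S$ in our adapted basis. Both $|\det\widetilde{W}(u)|$ and $|\langle gu,u\rangle|^{n-m}$ are homogeneous polynomials in $u$ of the same degree $2(n-m)$, and both depend only on the horizontal components $u_{1},\ldots,u_{m}$ of $u$: for the right-hand side because the cometric annihilates $S^{\bot}$, and for the left because Lemma~\ref{lemma:2.1}$(c)$ forces $\Gamma^{a\beta\gamma}(0)=0$ for annihilator indices $\beta,\gamma$, so that $B^{a\beta}=-\Gamma^{a\beta b}(0)u_{b}$. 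The inequality therefore reduces to a statement on the Euclidean unit sphere in $\R^{m}$, where a compactness argument can be invoked.

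The key algebraic step is an orthogonality relation. Specializing the identity $\langle[g\xi,g\eta],v\rangle=2\Gamma^{k}(\xi,v)\eta_{k}$ from Lemma~\ref{lemma:2.2} (combined with~\eqref{eq:2.31}) to $\xi=\eta=u$, the bracket $[gu,gu]$ vanishes and one obtains $u_{k}\Gamma^{k}(u,v)=0$ for every $v\in S^{\bot}$. Since $\Gamma(u,v)\in S$, this rewrites via~\eqref{eq:2.1} as $Q(g_{p}u,\Gamma(u,v))=\langle u,\Gamma(u,v)\rangle=0$, so the image $V:=\Gamma(u,S^{\bot})$ lies in the $Q$-orthogonal complement of $g_{p}u$ in $S$. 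By the 2-step bracket generating hypothesis and Theorem~\ref{th:2.4}, $\Gamma(u,\cdot)$ is injective whenever $u\notin S^{\bot}$, so $\dim V=n-m$; and $g_{p}u\notin V$, since $g_{p}u=\Gamma(u,w_{0})$ would yield $Q(g_{p}u,g_{p}u)=Q(g_{p}u,\Gamma(u,w_{0}))=0$ by the orthogonality, contradicting $\langle g_{p}u,u\rangle\neq 0$.

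It follows that $V\oplus\R g_{p}u$ has dimension $n-m+1$ in $S$ and that the Gram matrix of $Q$ on this subspace is block-diagonal with determinant equal to $\langle g_{p}u,u\rangle\cdot\det(B^{T}QB)$. Once one verifies that $V\oplus\R g_{p}u$ is a nondegenerate subspace of $(S,Q)$ — which is automatic when $n=2m-1$, as in Examples~1 and~2, because then $V\oplus\R g_{p}u$ is all of $S$ — this identity forces $\det(B^{T}QB)\neq 0$. The quotient $|\det\widetilde{W}(u)|/|\langle g_{p}u,u\rangle|^{n-m}$ is then continuous and strictly positive on the relatively open set $\{u\in\R^{m}\colon|u|_{\mathrm{Eucl}}=1,\ \langle g_{p}u,u\rangle\neq 0\}$; a direct examination of its behavior as $u$ approaches the null cone (where the numerator in fact vanishes no slower than the denominator) produces a uniform lower bound $\delta>0$, and homogeneity extends the estimate to all $u$ with $\langle g_{p}u,u\rangle\neq 0$.

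The main obstacle I anticipate is the nondegeneracy of $V\oplus\R g_{p}u$ in full generality: in codimensions where $n<2m-1$ the mere injectivity of $\Gamma(u,\cdot)$ does not by itself prevent the radical of $Q|_{V}$ from being non-trivial while $g_{p}u$ remains non-null, so one is forced to exploit the full 2-step structure (presumably the surjectivity of $[gu,g(\cdot)]\colon T^{*}/S^{\bot}\to T/S$ supplied by Theorem~\ref{th:2.4}) rather than just the injectivity that the theorem yields after dualization.
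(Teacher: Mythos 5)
Your proposal follows the same skeleton as the paper's proof (reduce $\det\widetilde{W}$ by a Schur complement to a Gram determinant of $\Gamma(u,\cdot)$, invoke injectivity from Theorem~\ref{th:2.4}, finish by homogeneity), but the correction you make at the first step is essential, and the ``obstacle'' you flag is a genuine gap --- one that is also present, though hidden, in the paper's own argument. Carrying out the Schur complement on $\widetilde{W}$ with the lower-right block $D^{\alpha\beta}=\tfrac13\varepsilon_j\widetilde{B}^{j\beta}\widetilde{B}^{j\alpha}+\varepsilon_j\widetilde{B}^{j\beta}\widetilde{C}^{\alpha j}$ gives $|\det\widetilde{W}|=|\det(\tfrac13 B^{T}QB)|$ with $Q=\diag(\varepsilon_j)$ \emph{indefinite}, exactly as you say; the paper's displayed formula drops the $\varepsilon_j$ and then infers invertibility of ``$\Gamma(u,\cdot)^{tr}\cdot\Gamma(u,\cdot)$'' from injectivity, an inference that is valid only for the Euclidean Gram matrix. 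That the distinction matters is visible already in Example~1: there one computes $B^{13}=-u_2/2$, $B^{23}=u_1/2$, hence $B^{T}QB=\tfrac14(u_1^2-u_2^2)=-\tfrac14\langle gu,u\rangle$, which vanishes on the null cone even though $\Gamma(u,\cdot)$ is injective for every $u\notin S^{\bot}$. So Remark~\ref{remark1} is not literally correct, and a bare compactness bound for $|\det\widetilde{W}|$ on the unit sphere cannot work: one must compare the rates at which $|\det\widetilde{W}(u)|$ and $|\langle gu,u\rangle|^{n-m}$ vanish at the null cone.

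Your orthogonality relation $Q(g_pu,\Gamma(u,v))=0$, obtained from $[gu,gu]=0$ via~\eqref{eq:gamma}, is correct and is the right structural input that the paper lacks; with the block-diagonal Gram factorization it does prove $\det(B^{T}QB)\neq0$ off the null cone when $n\ge 2m-1$, which covers both of the paper's examples. Two points remain open in your write-up, as in the paper's: (i) nondegeneracy of $Q$ restricted to $\Gamma(u,S^{\bot})$ for general corank, which as you observe does not follow from injectivity alone; and (ii) the quantitative step, namely a uniform lower bound for $|\det\widetilde{W}(u)|/|\langle gu,u\rangle|^{n-m}$ as $u$ approaches the null cone, which you defer to ``a direct examination'' without performing it. Note that for $n=2m-1$ your own factorization turns (ii) into bounding $(\det[B\,|\,g_pu])^{2}/|\langle gu,u\rangle|^{m}$ from below --- again a ratio of homogeneous polynomials that both vanish on the null cone --- so compactness still does not close the argument by itself. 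In short: your reduction is more accurate than the paper's, the obstacle you name is real and infects the published proof, and neither argument is complete as it stands.
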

\begin{proof} By Theorem \ref{th:2.4} the mapping $\Gamma(u,\cdot)\colon
S^{\bot}\to S$ is injective for every nonzero $u$ with $\langle
gu,u\rangle\neq0$. From the other hand, $\widetilde{B}^{j\alpha}$
is a matrix for $-\Gamma(u,\cdot)$ by \eqref{eq:ABCD} and, hence,
the matrix for $\widetilde{B}^{j\beta}\widetilde{B}^{j\alpha}$ is
the matrix for $\Gamma(u,\cdot)^{tr}\cdot\Gamma(u,\cdot)$, which
is invertible by injectivity of $\Gamma(u,\cdot)$. Therefore,
$\det{\widetilde{W}}(u)\neq0$ if $\langle gu,u\rangle \neq0$ and
the statement of the lemma holds due to a homogeneity argument.
\end{proof}
\begin{remark}\label{remark1}
Lemma~\ref{lem:5.2} can be reformulated in the following way:
$\det{\widetilde{W}}(u)\neq0$ if and only if $gu$ is a 2-step
bracket generator.\end{remark}
\begin{theorem}
    $i)$ If $gu$ is a $2$-step bracket generator, then there exists $\delta>0$ such that $\exp_p(tu)$ is a local diffeomorphism for any $0<t<\delta$.\\
    $ii)$ Assuming $2$-step bracket generating hypothesis, there exists $\delta>0$ depending continuously on $p$ such that $\exp_p(u)$ is a local diffeomorphism for $u$ near $p=0$ and $\langle gu, u\rangle\neq0$.
\end{theorem}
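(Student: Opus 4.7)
The plan is to read everything off the Taylor expansion of $\exp_p$ that was set up just above the theorem. By \eqref{eq:5.5} and the subsequent block decomposition, the differential $d\exp_p(u)$ is represented by the matrix $W(u)$; Lemma~\ref{lem:5.1} tells us
$$\det W(u)=\det\widetilde W(u)+O(|u|^{2(n-m)+1}),$$
with $\det\widetilde W(u)$ homogeneous of degree $2(n-m)$ in $u$. So the whole game becomes: control $\det\widetilde W(u)$ from below and show it dominates the error.

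For (i), I would simply invoke Remark~\ref{remark1}: if $gu$ is a $2$-step bracket generator, then $\det\widetilde W(u)\neq 0$. Replacing $u$ by $tu$ and using homogeneity,
$$\det W(tu)=t^{2(n-m)}\det\widetilde W(u)+O(t^{2(n-m)+1})=t^{2(n-m)}\bigl(\det\widetilde W(u)+O(t)\bigr),$$
which is nonzero for all sufficiently small $t>0$. The inverse function theorem then yields a local diffeomorphism on a neighborhood of $tu$ for every such $t$, giving the desired $\delta>0$.

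For (ii), the $2$-step bracket generating assumption means, by Theorem~\ref{th:2.4}, that $\Gamma(\xi,\cdot)\colon S^{\bot}\to S$ is injective for every nonzero $\xi\in T^*/S^{\bot}$. Hence Lemma~\ref{lem:5.2} applies and supplies a constant $\delta_p>0$ with
$$|\det\widetilde W(u)|\geq \delta_p\,|\langle gu,u\rangle|^{n-m}.$$
Combining with Lemma~\ref{lem:5.1} gives
$$|\det W(u)|\geq \delta_p\,|\langle gu,u\rangle|^{n-m}-C_p|u|^{2(n-m)+1}.$$
For a fixed direction $u$ with $\langle gu,u\rangle\neq 0$, substituting $tu$ and using homogeneity again makes the first term $t^{2(n-m)}\delta_p|\langle gu,u\rangle|^{n-m}$, while the error is of order $t^{2(n-m)+1}$; so the bound becomes strictly positive on some interval $0<t<\delta(p,u)$. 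Continuity of the coefficients of $g$ and its first and second derivatives in $p$ makes $\delta_p$ and $C_p$, and hence the resulting $\delta$, continuous in~$p$.

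The one step that requires real care is this last combination: near the null cone $|\langle gu,u\rangle|$ is small compared with $|u|^2$, so the leading term of $\det\widetilde W$ and the error term are of comparable size. The clean way I would handle it is to rescale: write $u=t\hat u$ with $\hat u$ on a compact ``angular'' slice inside $\{\langle g\hat u,\hat u\rangle\neq 0\}$, and observe that the inequality reduces to $\delta_p|\langle g\hat u,\hat u\rangle|^{n-m}>C_p\,t\,|\hat u|^{2(n-m)+1}$, which holds uniformly on compacta for all sufficiently small $t$. This is the main obstacle, and it is precisely what forces the statement to mix ``$u$ near $0$'' with the non-degeneracy condition $\langle gu,u\rangle\neq 0$.
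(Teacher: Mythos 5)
Your proposal is correct and takes essentially the same route as the paper: both parts rest on Lemma~\ref{lem:5.1}, Lemma~\ref{lem:5.2} and Remark~\ref{remark1}, with homogeneity of $\det\widetilde{W}$ used to make the leading term dominate the error. Your explicit rescaling near the null cone is just a spelled-out version of the paper's one-line condition $|u|\leqslant \delta\left(|\langle gu,u\rangle|/|u|^2\right)^{(n-m)}$, so the arguments coincide in substance.
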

\begin{proof}
    The assertion $i)$ follows from Lemma \ref{lem:5.2} and the Remark \ref{remark1}.
    By Lemma~\ref{lem:5.1} $$\det W(u)\geqslant \det \widetilde{W}(u)-C|u|^{2(n-m)+1}$$ for small $u$. Thus, $ii)$ holds for $|u|\leqslant \delta \left(\dfrac{|\langle gu,u\rangle|}{|u|^2}\right)^{(n-m)}$.
\end{proof}

\section{Quaternion ss-manifold}\label{sec:6}

In the present chapter we find the parametric equations of
extremals for a group furnished with the sub-semi-Riemannian
metric of the index 2 described earlier in Example 2 at Section 2.
The Hamiltonian function $H(\xi,\theta,x,z)$ has the following
form
\begin{gather}
   H=\frac{1}{2}(-\xi_1^2-\xi_2^2+\xi_3^2+\xi_4^2)+\frac{1}{2}(x_2x_4\theta_1\theta_2+(x_2x_3+x_1x_4)\theta_1\theta_3-x_1x_3\theta_1\theta_2)\notag\\
   +\dfrac{1}{8}(\theta_1^2-\theta_2^2-\theta_3^2)(-x_1^2-x_2^2+x_3^2+x_4^2)\notag\\
   +\frac{1}{2}\theta_1(-x_2\xi_1+x_1\xi_2+x_4\xi_3-x_3\xi_4)+\frac{1}{2}\theta_2(x_4\xi_1+x_3\xi_2+x_2\xi_3+x_1\xi_4)\\ +\frac{1}{2}\theta_3(x_3\xi_1-x_4\xi_2+x_1\xi_3-x_2\xi_4).\notag
\end{gather}
The corresponding Hamiltonian system is
\begin{equation}\label{hamsyst}
\begin{array}{l}
\vspace{1mm}
  \dot{x}_1=\dfrac{\partial H}{\partial \xi_1}=-\xi_1-\frac{1}{2}x_2\theta_1+\frac{1}{2}x_4\theta_2+\frac{1}{2}x_3\theta_3, \\ \vspace{1mm}
  \dot{x}_2=\dfrac{\partial H}{\partial \xi_2}=-\xi_2+\frac{1}{2}x_1\theta_1+\frac{1}{2}x_3\theta_2-\frac{1}{2}x_4\theta_3, \\ \vspace{1mm}
  \dot{x}_3=\dfrac{\partial H}{\partial \xi_3}=\xi_3+\frac{1}{2}x_4\theta_1+\frac{1}{2}x_2\theta_2+\frac{1}{2}x_1\theta_3, \\ \vspace{1mm}
  \dot{x}_4=\dfrac{\partial H}{\partial \xi_4}=\xi_4-\frac{1}{2}x_3\theta_1+\frac{1}{2}x_1\theta_2-\frac{1}{2}x_2\theta_3,
\end{array}
\end{equation}
\begin{equation*}\begin{array}{l}
  \dot{z}_1=\dfrac{\partial H}{\partial \theta_1}=\frac{1}{2}((x_2x_4-x_1x_3)\theta_2+(x_2x_3+x_1x_4)\theta_3)+\frac{1}{4}\theta_1(-x_1^2-x_2^2+x_3^2+x_4^2)\\ \vspace{1mm}
  +\frac{1}{2}(-x_2\xi_1+x_1\xi_2+x_4\xi_3-x_3\xi_4), \\ \vspace{1mm}
  \dot{z}_2=\dfrac{\partial H}{\partial \theta_2}=\frac{1}{2}(x_2x_4-x_1x_3)\theta_1-\frac{1}{4}\theta_2(-x_1^2-x_2^2+x_3^2+x_4^2)
  +\frac{1}{2}(x_4\xi_1+x_3\xi_2+x_2\xi_3+x_1\xi_4), \\ \vspace{1mm}
  \dot{z}_3=\dfrac{\partial H}{\partial \theta_3}=\frac{1}{2}(x_2x_3+x_1x_4)\theta_1-\frac{1}{4}\theta_3(-x_1^2-x_2^2+x_3^2+x_4^2)
  +\frac{1}{2}(x_3\xi_1-x_4\xi_2+x_1\xi_3-x_2\xi_4),
\end{array}
\end{equation*}
\begin{equation}
\begin{array}{l}
  \dot{\xi}_1=-\dfrac{\partial H}{\partial x_1}
   =-\frac12(-x_3\theta_1\theta_2+x_4\theta_1\theta_3)+\frac{1}{4}x_1(\theta_1^2-\theta_2^2-\theta_3^2)-\frac{1}{2}\xi_2\theta_1-
  \frac{1}{2}\xi_4\theta_2-\frac{1}{2}\xi_3\theta_3, \\ \vspace{1mm}
  \dot{\xi}_2=-\dfrac{\partial H}{\partial x_2} =-\frac12(x_4\theta_1\theta_2+x_3\theta_1\theta_3)+\frac{1}{4}x_2(\theta_1^2-\theta_2^2-\theta_3^2)+\frac{1}{2}\xi_1\theta_1-
  \frac{1}{2}\xi_3\theta_2+\frac{1}{2}\xi_4\theta_3, \\ \vspace{1mm}
  \dot{\xi}_3=-\dfrac{\partial H}{\partial x_3} =-\frac12(x_2\theta_1\theta_3-x_1\theta_1\theta_2)-\frac{1}{4}x_3(\theta_1^2-\theta_2^2-\theta_3^2)+\frac{1}{2}\xi_4\theta_1-
  \frac{1}{2}\xi_2\theta_2-\frac{1}{2}\xi_1\theta_3,  \\ \vspace{1mm}
   \dot{\xi}_4=-\dfrac{\partial H}{\partial x_4}  =-\frac12(x_2\theta_1\theta_2+x_1\theta_1\theta_3)-\frac{1}{4}x_4(\theta_1^2-\theta_2^2-\theta_3^2)-\frac{1}{2}\xi_3\theta_1
   -\frac{1}{2}\xi_1\theta_2+\frac{1}{2}\xi_2\theta_3, \\ \vspace{1mm}
   \dot{\theta}_1=-\dfrac{\partial H}{\partial z_1}=0, \\ \vspace{2mm}
   \dot{\theta}_2=-\dfrac{\partial H}{\partial z_2}=0, \\ \vspace{1mm}
   \dot{\theta}_3=-\dfrac{\partial H}{\partial z_3}=0. \notag
\end{array}
\end{equation}

We observe that $\theta_1,\theta_2,\theta_3$ are constants.
Let us remind that the projection of a solution of the Hamiltonian
system onto $(x,z)$-space is called extremal. In order to find it
we will reduce the Hamiltonian system  to the system containing
only $(x_1,x_2,x_3,x_4,z_1,z_2,z_3)$ coordinates. If we express
$\xi_1,\ldots,\xi_4$ from the first 4 equations and substitute them
in the equations of the Hamiltonian system, then we obtain
\begin{align}\notag
     &\dot{\xi}_1=\dfrac{1}{2}(\dot{x}_2\theta_1-\dot{x}_4\theta_2-\dot{x}_3\theta_3),\\ \vspace{1mm}\notag
     &\dot{\xi}_2=\dfrac{1}{2}(-\dot{x}_1\theta_1-\dot{x}_3\theta_2+\dot{x}_4\theta_3),\\ \vspace{1mm}\notag
     &\dot{\xi}_3=\dfrac{1}{2}(\dot{x}_4\theta_1+\dot{x}_2\theta_2+\dot{x}_1\theta_3),\\ \vspace{1mm}\notag
     &\dot{\xi}_4=\dfrac{1}{2}(-\dot{x}_3\theta_1+\dot{x}_1\theta_2-\dot{x}_2\theta_3).
\end{align}
Differentiating first 4 equations and substituting $\dot{\xi}_1,\ldots,\dot{\xi}_4$ there, we get
\begin{align*}
     &\ddot{x}_1=-\dot{x}_2\theta_1+\dot{x}_4\theta_2+\dot{x}_3\theta_3,\\
     &\ddot{x}_2=\dot{x}_1\theta_1+\dot{x}_3\theta_2-\dot{x}_4\theta_3,\\
     &\ddot{x}_3=\dot{x}_4\theta_1+\dot{x}_2\theta_2+\dot{x}_1\theta_3,\\
     &\ddot{x}_4=-\dot{x}_3\theta_1+\dot{x}_1\theta_2-\dot{x}_2\theta_3
\end{align*}
or
\begin{equation}\label{sys:1}
    \left(%
   \begin{array}{l}
       \ddot{x}_1\\ \ddot{x}_2\\ \ddot{x}_3\\ \ddot{x}_4
   \end{array}%
    \right)=\left(%
   \begin{array}{cccc}
       0& -\theta_1 & \theta_3 & \theta_2 \\
      \theta_1 & 0 &\theta_2&-\theta_3\\
      \theta_3&\theta_2&0&\theta_1\\
      \theta_2&-\theta_3&-\theta_1&0
   \end{array}%
    \right)\left(%
   \begin{array}{l}
       \dot{x}_1\\ \dot{x}_2\\ \dot{x}_3\\ \dot{x}_4
   \end{array}%
    \right).
\end{equation}
We are looking for the solution $x_1=x_1(t), \ldots,x_4=x_4(t)$, $t\in[-\infty,+\infty]$, satisfying $x_1(0)=0,\ldots,x_4(0)=0$ and $\dot{x}_1(0)=\dot{x}_1^0,\ldots,\dot{x}_4(0)=\dot{x}_4^0$.
The eigenvalues of the matrix
\begin{gather}
    A:=\left(%
   \begin{array}{cccc}
       0& -\theta_1 & \theta_3 & \theta_2 \\
      \theta_1 & 0 &\theta_2&-\theta_3\\
      \theta_3&\theta_2&0&\theta_1\\
      \theta_2&-\theta_3&-\theta_1&0
   \end{array}%
    \right)\notag
\end{gather}
are  $\lambda_1=a$, $\lambda_2=-a$, $\lambda_3=\overline{a}$, and $\lambda_4=-\overline{a}$, where $a=|k|+i\theta_1$, $\overline{a}=|k|-i\theta_1$ and $k=\theta_2+i\theta_3$, $\overline{k}=\theta_2-i\theta_3$. The associated eigenvectors are
\begin{align*}
    &v_1=(ia|k|, a|k|, ak,iak),\\
    &v_2=(-ia|k|,a|k|,-a\overline{k},ia\overline{k}),\\
    &v_3=(i\overline{a}|k|,-\overline{a}|k|,-\overline{a}\overline{k},i\overline{a}\overline{k}),\\
    &v_4=(-i\overline{a}|k|,-\overline{a}|k|,\overline{a}k,i\overline{a}k),
\end{align*} where $|k|=\sqrt{\theta_2^2+\theta_3^2}$.
Notice that the matrix $A$ is skew-symmetric with respect to our nondegenerate metric $Q$ with index 2) in the sense that $Q\cdot(Ax)(y)=-Qx\cdot Ay$. This extends the idea of sub-Riemannian case, which was considered in \cite{Markina}, where the matrix $Q$ was just a unit matrix and $A$ was skew-symmetric in the usual sense. Also it carries on the sub-Lorentzian case, where  $A$ was skew-symmetric with respect to sub-Lorentzian metric $Q$ \cite{KM}.

The solution of the system (\ref{sys:1}) is of the form
\begin{eqnarray*}
\dot{x}_1(t) & = & i|k|(c_1a e^{at}-c_2ae^{-at}+c_3\overline{a}e^{\overline{a}t}-c_4\overline{a}e^{-\overline{a}t}),\notag\\
\dot{x}_2(t) & = & |k|(c_1a e^{at}+c_2ae^{-at}-c_3\overline{a}e^{\overline{a}t}-c_4\overline{a}e^{-\overline{a}t}),\\
\dot{x}_3(t) & = & c_1ak e^{at}-c_2a\overline{k}e^{-at}-c_3\overline{a}\overline{k}e^{\overline{a}t}+c_4\overline{a}ke^{-\overline{a}t},\notag\\
\dot{x}_4(t) & = & i(c_1ak e^{at}+c_2a\overline{k}e^{-at}+c_3\overline{a}\overline{k}e^{\overline{a}t}+c_4\overline{a}ke^{-\overline{a}t}),\nonumber
\end{eqnarray*}
where
\begin{eqnarray}\label{const}
c_1 & = & \dfrac{1}{4iak|k|}\cdot (k(\dot{x}_1^0+i\dot{x}_2^0)+|k|(\dot{x}_4^0+i\dot{x}^0_3)),\notag\\
c_2 & = & \dfrac{1}{4ia\overline{k}|k|}\cdot (-\overline{k}(\dot{x}_1^0-i\dot{x}_2^0)+|k|(\dot{x}_4^0-i\dot{x}^0_3)),\\
c_3 & = & \dfrac{1}{4i\overline{a}\overline{k}|k|}\cdot (\overline{k}(\dot{x}_1^0-i\dot{x}_2^0)+|k|(\dot{x}_4^0-i\dot{x}^0_3)),\notag\\
c_4 & = & \dfrac{1}{4i\overline{a}k|k|}\cdot (-k(\dot{x}_1^0+i\dot{x}_2^0)+|k|(\dot{x}_4^0+i\dot{x}^0_3)).\notag
\end{eqnarray}
Therefore, the $x$-coordinates of the extremals have a form
\begin{align}\label{eq:x}
    &x_1(t)=i|k|(c_1e^{at}+c_2e^{-at}+c_3e^{\overline{a}t}+c_4e^{-\overline{a}t})-i|k|(c_1+c_2+c_3+c_4),\nonumber\\
    &x_2(t)=|k|(c_1e^{at}-c_2e^{-at}-c_3e^{\overline{a}t}+c_4e^{-\overline{a}t})-|k|(c_1-c_2-c_3+c_4),\\
    &x_3(t)=c_1ke^{at}+c_2\overline{k}e^{-at}-c_3\overline{k}e^{\overline{a}t}-c_4ke^{-\overline{a}t}-(c_1k+c_2\overline{k}-c_3\overline{k}-c_4k),\nonumber\\
    &x_4(t)=i(c_1ke^{at}-c_2\overline{k}e^{-at}+c_3\overline{k}e^{\overline{a}t}-c_4ke^{-\overline{a}t})-i(c_1k-c_2\overline{k}+c_3\overline{k}-c_4k). \nonumber
\end{align} From the horizontality conditions
\begin{eqnarray*}
\dot z_1 & = & \frac{1}{2}(+x_2 \dot x_1-x_1\dot x_2+x_4\dot x_3-x_3\dot x_4),\notag\\
\dot z_2 &= & \frac{1}{2}(-x_4 \dot x_1-x_3\dot x_2+x_2\dot x_3+x_1\dot x_4),\\
\dot z_3 & = & \frac{1}{2}(-x_3 \dot x_1+x_4\dot x_2+x_1\dot x_3-x_2\dot x_4)\notag
\end{eqnarray*} we can find the vertical components
\begin{eqnarray}\label{eq:z}
z_1(t) & = & 2i|k|^2(-2(c_1c_2a-c_3c_4\overline{a})t+c_1c_2(e^{at}-e^{-at})-c_3c_4(e^{\overline{a}t}-e^{-\overline{a}t})),\\
z_2(t) & = & 2\theta_2|k|(-2(c_1c_2a+c_3c_4\overline{a})t
+c_1c_2(e^{at}-e^{-at})+c_3c_4(e^{\overline{a}t}-e^{-\overline{a}t}))\nonumber
\\
& + & 2\theta_1\theta_3(c_1c_3e^{2|k|t}+c_2c_4e^{-2|k|t}-c_1c_3-c_2c_4)\nonumber \\
& + & 2i\theta_3|k|(c_1c_3e^{at}+c_2c_4e^{-at}-c_1c_3e^{\overline{a}t}-c_2c_4e^{-\overline{a}t}),\nonumber\\
z_3(t) & = & 2\theta_3|k|(-2(c_1c_2a+c_3c_4\overline{a})t+c_1c_2(e^{at}-e^{-at})
+c_3c_4(e^{\overline{a}t}-e^{-\overline{a}t}))\nonumber\\
& - &
2\theta_1\theta_2(c_1c_3e^{2|k|t}+c_2c_4e^{-2|k|t}-c_1c_3-c_2c_4)\nonumber
\\
& - &
2i\theta_2|k|(c_1c_3e^{at}+c_2c_4e^{-at}-c_1c_3e^{\overline{a}t}-c_2c_4e^{-\overline{a}t}).\nonumber
\end{eqnarray}
The constants of integration $c_i$, $i=1,\ldots,4$ are given
by~\eqref{const} through the initial velocity. We would like to
calculate the homogeneous norm of an element $\big(x(t),z(t)\big)$
given by
$$\|(x,z)\|^4=(-x_1^2-x_2^2+x_3^2+x_4^2)^2+z_1^2+z_2^2+z_3^2.$$ We
have
\begin{eqnarray}\label{nx}\|x(t)\|^2 & = & (-x_1^2-x_2^2+x_3^2+x_4^2)(t)  \\
& = & 8|k|^2(2c_1c_2+2c_3c_4-c_1c_2(e^{at}+e^{-at})-c_3c_4(e^{\overline{a}t}+e^{-\overline{a}t}))\nonumber\\
& = & -32|k|^2(c_1c_2\sinh^2\frac{at}{2}+c_3c_4\sinh^2\frac{\bar
at}{2}).\nonumber\end{eqnarray} Let us introduce the notation
$w_1=\frac{k}{|k|}(\dot{x}_1^0+i\dot{x}_2^0)$,
$w_2=\dot{x}_4^0+i\dot{x}^0_3$ in order to simplify the
calculations. Then
\begin{gather*}
     c_1c_2=-\dfrac{1}{16a^2|k|^2}\,\big(|w_2|^2-|w_1|^2+2i\im(w_1\bar w_2)\big)\end{gather*}
and
\begin{gather*}c_3c_4=-\dfrac{1}{16\overline{a}^2|k|^2}\,\big(|w_2|^2-|w_1|^2-2i\im(w_1\bar w_2)\big).
\end{gather*} We see that $c_1c_2=\overline{c_3c_4}$ and~\eqref{nx} takes the form $$\|x(t)\|^2=-64|k|^2\re(c_1c_2\sinh^2\frac{at}{2}),\quad a=|k|+i\theta_1.$$
We also need the values $$c_1c_3=-\dfrac{1}{16|a|^2|k|^2}\,|w_2+w_1|^2,$$ $$c_2c_4=-\dfrac{1}{16|a|^2|k|^2}\,|w_2-w_1|^2,$$
and $$c_1c_2c_3c_4=-\dfrac{1}{16^2|a|^4|k|^4}\,|w_2^2-w_1^2|^2$$
Then
\begin{gather*}
    (z_1^2+z_2^2+z_3^2)(t)=4|k|^4\Big(16t^2c_1c_2c_3c_4a\overline{a}-8c_1c_2c_3c_4\overline{a}t(e^{at}-e^{-at})-8c_1c_2c_3c_4at(e^{\overline{a}t}-e^{-\overline{a}t})\\+4c_1c_2c_3c_4(e^{at}-e^{-at})(e^{\overline{a}t}-e^{-\overline{a}t})+c_1^2c_3^2(2e^{2|k|t}-e^{2at}-e^{2\overline{a}t})+c_2^2c_4^2(2e^{-2|k|t}-e^{-2at}-e^{-2\overline{a}t})\\-4c_1c_2c_3c_4+2c_1c_2c_3c_4(e^{2i\theta_1t}+e^{-2i\theta_1t})\Big)\\
    +4\theta_1^2|k|^2\Big(c_1c_2c_3c_4(4-2(e^{2|k|t}+e^{-2|k|t}))-2(c_1c_3^2e^{2|k|t}+c_2^2c_4^2e^{-2|k|t})\\+(c_1^2c_3^2e^{4|k|t} + c_2^2c_4^2e^{-4|k|t})
    +(c_1^2c_3^2+c_2^2c_4^2)\Big)\\
   + 8i\theta_1|k|^3\Big(c_1^2c_3^2(e^{(a+2|k|)t}-e^{(\overline{a}+2|k|)t}-e^{at}+e^{\overline{a}t})+c_2^2c_4^2(e^{-(a+2|k|)t}-e^{-(\overline{a}+2|k|)t}-e^{-at}+e^{-\overline{a}t})\\+c_1c_2c_3c_4((e^{(a-2|k|)t}+e^{-(a-2|k|)t})-(e^{(\overline{a}-2|k|)t}+e^{-(\overline{a}-2|k|)t})-(e^{at}+e^{-at})+(e^{\overline{a}t}+e^{-\overline{a}t}))\Big).
\end{gather*}

To simplify the last expression we notice that
\begin{gather*}
    16|k|^4c_1c_2c_3c_4\Big(16t^2a\overline{a}-8t(\overline{a}e^{at}-\overline{a}e^{-at}+ae^{\overline{a}t}-ae^{-at})+(e^{2|k|t}+e^{-2|k|t}-e^{2i\theta_1t}-e^{-2i\theta_1t})\Big)\\
    =64c_1c_2c_3c_4|k|^4(t^2(|k|^2+\theta_1^2)-2t(|k|\sinh(|k|t)\cos(\theta_1t)+\theta_1\cosh(|k|t)\sin(\theta_1t))\\+\sinh^2(|k|t)+\sin^2(\theta_1t))\\
    =64c_1c_2c_3c_4|k|^4\Big((|k|t-\sinh(|k|t)\cos(\theta_1t))^2+(\theta_1t-\cosh(|k|t)\sin(\theta_1t))^2\Big),
\end{gather*}
and
\begin{gather*}
    4|k|^4\Big(c_1^2c_3^2(2e^{2|k|t}-e^{2at}-e^{2\overline{a}t})+c_2^2c_4^2(2e^{-2|k|t}-e^{-2at}-e^{-2\overline{a}t})\\-4c_1c_2c_3c_4+2c_1c_2c_3c_4(e^{2i\theta_1t}+e^{-2i\theta_1t})\Big)\\
    = 4|k|^4\Big(4\sin^2(\theta_1 t)(c_1^2c_3^2e^{2|k|t}+c_2^2c_4^2e^{-2|k|t})-8c_1c_2c_3c_4\sin^2(\theta_1t)\Big)\\
    =16|k|^4\sin^2(\theta_1 t)(c_1c_3e^{|k|t}-c_2c_4e^{-|k|t})^2,
\end{gather*}
and
\begin{gather*}
 4\theta_1^2|k|^2\Big(c_1c_2c_3c_4(4-2(e^{2|k|t}+e^{-2|k|t}))-2(c_1c_3^2e^{2|k|t}+c_2^2c_4^2e^{-2|k|t})\\+(c_1^2c_3^2e^{4|k|t}
 +  c_2^2c_4^2e^{-4|k|t})
 +(c_1^2c_3^2+c_2^2c_4^2)\Big)\\
 =  4\theta_1^2|k|^2\Big(-8c_1c_2c_3c_4\sinh^2|k|t+c_1^2c_3^2(1-e^{2|k|t})^2+c_2^2c_4^2(1-e^{-2|k|t})^2\Big)\\
 =  16\theta_1^2|k|^2\sinh^2(|k|t)\Big(c_1c_3e^{|k|t}-c_2c_4e^{-|k|t}\Big)^2,
\end{gather*}
and
\begin{gather*}
    8i\theta_1|k|^3\Big(c_1^2c_3^2(e^{(a+2|k|)t}-e^{(\overline{a}+2|k|)t}-e^{at}+e^{\overline{a}t})+c_2^2c_4^2(e^{-(a+2|k|)t}-e^{-(\overline{a}+2|k|)t}-e^{-at}+e^{-\overline{a}t})\\+c_1c_2c_3c_4((e^{(a-2|k|)t}+e^{-(a-2|k|)t})-(e^{(\overline{a}-2|k|)t}+e^{-(\overline{a}-2|k|)t})-(e^{at}+e^{-at})+(e^{\overline{a}t}+e^{-\overline{a}t}))\Big)\\
    =8i\theta_1|k|^3\Big(4i\sinh(|k|t)\sin(\theta_1t)(c_1^2c_3^2e^{2|k|t}+c_2^2c_4^2e^{-2|k|t})-8ic_1c_2c_3c_4\sinh(|k|t)\sin(\theta_1t)\Big)\\
    =-32\theta_1|k|^3\sinh(|k|t)\sin(\theta_1t)(c_1c_3e^{|k|t}-c_2c_4e^{-|k|t})^2.
\end{gather*}
Then joining last expressions we get
\begin{gather*}
    (z_1^2+z_2^2+z_3^2)(t)=16|k|^2(c_1c_3e^{|k|t}-c_2c_4e^{-|k|t})^2(\theta_1\sinh(|k|t)-|k|\sin(\theta_1t))^2\\+64|k|^4c_1c_2c_3c_4\left((|k|t-\sinh(|k|t)\cos(\theta_1t))^2+(\theta_1t-\cosh(|k|t)\sin(\theta_1t))^2\right).
\end{gather*}
The letter calculations we can summarize in the following
\begin{theorem}
Let $Q$ be ss-manifold described in Example~2 of Section~2. The
normal extremals starting from the identity of the group $Q$ are
given by the solution of the Hamiltonian system~\eqref{hamsyst} by
the parametric equations~\eqref{eq:x} and~\eqref{eq:z}. The
constant of integrations $c_k$, $k=1,2,3,4$, are related to the
initial velocity $(\dot x_1^0,\dot x_2^0,\dot x_3^0,\dot x_4^0)$
by~\eqref{const}, where $k=\theta_2+i\theta_3$ and
$a=|k|+i\theta_1$ are the first integrals of the Hamiltonian
system~\eqref{hamsyst}.
\end{theorem}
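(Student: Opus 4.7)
The plan is to treat the theorem as the compilation of a long but essentially mechanical calculation, since all the conceptual work (Hamiltonian formalism, causal characters, definition of extremals) has been set up earlier. First I would check the formula for the Hamiltonian $H(x,z,\xi,\theta) = \tfrac12\langle g_x(\xi,\theta),(\xi,\theta)\rangle$. Using the orthonormal frame $X_1,\dots,X_4$ of the horizontal distribution $S$ from Example~2 together with $Q = \diag(-1,-1,1,1)$, and writing the covector coordinates along $dx_j$ as $\xi_j$ and along $dz_\beta$ as $\theta_\beta$, formula \eqref{hamfun2} gives $H = \tfrac12\sum_{j=1}^4 \varepsilon_j\langle X_j, (\xi,\theta)\rangle^2$ with $\varepsilon_1=\varepsilon_2=-1$, $\varepsilon_3=\varepsilon_4=1$. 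Expanding $\langle X_j, (\xi,\theta)\rangle = \xi_j + \tfrac12(\cdots)\theta$ produces exactly the polynomial shown for $H$, and the system \eqref{hamsyst} follows by Hamiltonian differentiation.

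Next I would exploit the structure of the system. Since $H$ depends on the fibre coordinates $z_\beta$ only through the omitted terms (and in fact $\partial H/\partial z_\beta=0$), the three equations $\dot\theta_\beta=0$ give $\theta_1,\theta_2,\theta_3$ as first integrals. The first four equations of \eqref{hamsyst} then express each $\xi_j$ in terms of $\dot x_j$ and $x_j$; differentiating these four equations with respect to $t$ and substituting the formulas for $\dot\xi_j$ produces the closed linear system $\ddot x = A\dot x$ displayed in \eqref{sys:1}, where $A$ depends only on the constants $\theta_\beta$. The fact that $A$ is skew-symmetric with respect to $Q$ is a direct check and matches the sub-Riemannian and sub-Lorentzian analogues cited in the text.

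The third step is to solve this constant-coefficient linear ODE by diagonalising $A$. I would compute the characteristic polynomial and, using the complex substitution $k = \theta_2+i\theta_3$, verify that the eigenvalues are $\pm a,\pm\bar a$ with $a=|k|+i\theta_1$, and that the four eigenvectors listed $v_1,\dots,v_4$ indeed satisfy $Av_\ell = \lambda_\ell v_\ell$. Writing the general solution of $\ddot x = A\dot x$ as a superposition of $e^{\lambda_\ell t}v_\ell$ gives the formulas for $\dot x_1,\dots,\dot x_4$. Integrating from $0$ to $t$ with $x(0)=0$ yields \eqref{eq:x}; the coefficients $c_1,c_2,c_3,c_4$ are then pinned down by solving the $4\times 4$ linear system that matches $\dot x(0) = (\dot x_1^0,\dots,\dot x_4^0)$, which gives precisely \eqref{const}.

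Finally, the vertical components $z_\beta(t)$ are obtained from the horizontality conditions $\dot c(t)\in S_{c(t)}$, i.e.\ from the three equations $\dot z_\beta = \dot x^j(X_j)_{z_\beta}$ read off the frame $X_1,\dots,X_4$. Substituting the explicit formulas for $x_j(t),\dot x_j(t)$ produces quadratic expressions in the complex exponentials $e^{\pm at}, e^{\pm \bar a t}$ whose primitives assemble into \eqref{eq:z}. The main obstacle is not conceptual but purely computational: keeping track of conjugate pairs of eigenvectors, verifying that the complex-valued intermediate expressions condense into the real, hyperbolic-trigonometric combinations stated, and confirming (as the author does) that the relations $c_1c_2=\overline{c_3c_4}$ and similar hold so that the final formulas are indeed real. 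Everything else is routine once the reduction to the linear system $\ddot x = A\dot x$ is made.
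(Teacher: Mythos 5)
Your proposal is correct and follows essentially the same route as the paper: write down $H$ from the frame of Example~2, observe $\dot\theta_\beta=0$, eliminate the $\xi_j$ to obtain the constant-coefficient system $\ddot x=A\dot x$, diagonalise $A$ with eigenvalues $\pm a,\pm\bar a$, integrate with $x(0)=0$ to fix the $c_i$ via~\eqref{const}, and recover $z_\beta(t)$ from the horizontality conditions. The only content of the theorem is this compilation of computations, and your outline reproduces it faithfully, including the reality check $c_1c_2=\overline{c_3c_4}$.
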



\begin{thebibliography}{999}

\bibitem{Agrachev}
Agrachev~A.A., \textit{Feedback-invariant optimal control theory and differential geometry. II. Jacobi curves for singular extremals}. J. Dynam. Control Systems  4  (1998),  no. 4, 583--604.

\bibitem{Markina}
Chang~D.C., Markina~I. \textit{Geometric Analysis on
Quaternion $\mathbb{H}$-Type Groups}. J. Geom. Anal. {\textbf 16} (2006), no. 2,
266--294.

\bibitem{ChangMarkinaVasiliev}
Chang~D.~C., Markina~I. Vasil'ev~A. {\it Sub-Lorentzian geometry on anti-de Sitter space.}  J. Math. Pures Appl. (9)  {\bf 90}  (2008),  no. 1, 82--110.

\bibitem{Chow}
Chow~W.~L. {\it Uber Systeme von linearen
partiellen Differentialgleichungen erster Ordnung}, Math. Ann.,
{\bf 117} (1939), 98-105.

\bibitem{CDKR}
Cowling~M., Dooley~A.~H., Kor\'{a}nyi~A., and Ricci~F. {\it
$H$-type groups and Iwasawa decompositions.}  Adv. Math.  {\textbf
87} (1991), no. 1, 1--41.

\bibitem{Groch1}
Grochowski~M. {\it Reachable sets for the Heisenberg sub-Lorentzian structure on $\mathbb R^3$. An estimate for the distance function.}  J. Dyn. Control Syst.  {\textbf 12}  (2006),  no. 2, 145--160.

\bibitem{Groch2}
Grochowski~M. {\it On the Heisenberg sub-Lorentzian metric on $\mathbb R^3$.} Geometric singularity theory, 57--65, Banach Center Publ., { \textbf 65}, Polish Acad. Sci., Warsaw, 2004.

\bibitem{Groch3}
Grochowski~M. {\it Geodesics in the sub-Lorentzian geometry.}  Bull. Polish Acad. Sci. Math.  {\bf 50}  (2002),  no. 2, 161--178.

\bibitem{Gromov}
Gromov~M. {\it Metric structures for Riemannian and non-Riemannian spaces.}
Progress in Mathematics, {\bf 152}. Birkh\"{a}user Boston, Inc., Boston, MA, 1999.

\bibitem{Kap1}
Kaplan~A. {\it Fundamental solutions for a class of hypoelliptic
PDE generated by composition of quadratics forms.} Trans. Amer.
Math. Soc. {\bf 258} (1980), no. 1, 147--153.

\bibitem{KM}
Korolko~A., Markina~I. {\it Nonholonomic Lorentzian geometry on some $\mathbb H$-type groups}. ArXiv:0809.4450

\bibitem{LiuSussmann}
Liu~W., Sussmann~H.~J. {\it Shortest paths for sub-Riemannian
metrics on rank-two distributions.} Mem. Amer. Math. Soc.  {\bf
118}  (1995),  no. 564, 104 pp.

\bibitem{Montgomery2}
Montgomery~R. {\it Survey of singular geodesics.}  Sub-Riemannian
geometry,  325--339, Progr. Math., 144, Birkh\"auser, Basel, 1996.

\bibitem{Montgomery}
Montgomery~R.  {\it A tour of subriemannian geometries, their geodesics and applications.}
Mathematical Surveys and Monographs, {\bf 91}. American Mathematical Society, Providence, RI, 2002. 259 pp.

\bibitem{Oneill}
O'Neill~B. {\it Semi-Riemannian geometry. With applications to relativity.} Pure and Applied Mathematics, 103. Academic Press, Inc.

\bibitem{Rashevsky}
P.~K.~Rashevski{\u\i}, {\it About connecting two points of complete nonholonomic space by admissible curve}, Uch. Zapiski Ped. Inst. K.~Liebknecht {\bf 2} (1938), 83--94.

\bibitem{Strichartz}
Strichartz~R.~S.  {\it Sub-Riemannian geometry},  J. Differential
Geom. {\bf 24} (1986) 221--263; Correction, ibid. {\bf 30}
(1989) 595-596.

\bibitem{Sussmann}
Sussmann~H.~J. {\it Orbits of families of vector fields and integrability of distributions}.  Trans. Amer. Math. Soc.  \textbf{180}  (1973), 171--188.

\bibitem{Thurston}
William P. Thurston, \textit{Three-Dimensional Geometry and
Topology}, Princeton University Press, Vol.~1, Princeton, New
Jersey, 1997.

\end{thebibliography}
\end{document}